\title{Quantitative conditions for right-handedness~of~flows \\
	\vspace{1cm}
Conditions quantitatives \\pour les flots l\'evogyres}
\date{\today}
\author{Anna Florio\thanks{Partially supported by Fondation Sciences Math\'ematiques de Paris.}\ \thanks{Partially supported by ANR AAPG 2021 PRC CoSyDy: Conformally symplectic dynamics, beyond symplectic dynamics, ANR-CE40-0014.} \and Umberto Hryniewicz\thanks{Partially supported by the DFG SFB/TRR 191 `Symplectic Structures in Geometry, Algebra and Dynamics', Projektnummer 281071066-TRR 191.}}
\newcommand{\C}{\mathbb{C}}
\newcommand{\R}{\mathbb{R}}
\newcommand{\Z}{\mathbb{Z}}
\newcommand{\N}{\mathbb{N}}
\newcommand{\D}{\mathbb{D}}
\newcommand{\link}{\mathrm{link}}
\newcommand{\T}{\mathbb{T}}
\newcommand{\supp}{{\rm supp}}
\newcommand{\wind}{{\rm wind}}
\newcommand{\PP}{\mathbb{P}}
\newcommand{\hor}{\mathrm{hor}}
\newcommand{\ver}{\mathrm{vert}}
\newcommand\blfootnote[1]{%
  \begingroup
  \renewcommand\thefootnote{}\footnote{#1}%
  \addtocounter{footnote}{-1}%
  \endgroup
}
\theoremstyle{plain}
\newtheorem{theorem}{Theorem}[section]
\newtheorem{proposition}[theorem]{Proposition}
\newtheorem{lemma}[theorem]{Lemma}
\newtheorem{corollary}[theorem]{Corollary}
\theoremstyle{definition}
\newtheorem{definition}[theorem]{Definition}
\theoremstyle{remark}
\newtheorem{remark}[theorem]{Remark}
\newtheorem{notation}[theorem]{Notation}
\definecolor{anna}{rgb}{0,.6,0}
\definecolor{umberto}{rgb}{0.773,0.294,0.549}
\begin{document}

\maketitle

\begin{abstract}
We give a numerical condition for right-handedness of a dynamically convex Reeb flow on the $3$-sphere. Our condition is stated in terms of an asymptotic ratio between the amount of rotation of the linearised flow and the linking number of trajectories with a periodic orbit that spans a disk-like global surface of section. As an application, we find an explicit constant $\delta_* < 0.7225$ such that if a Riemannian metric on the $2$-sphere is $\delta$-pinched with $\delta > \delta_*$, then its geodesic flow lifts to a right-handed flow on the $3$-sphere. In particular, all finite non-empty collections of periodic orbits of such a geodesic flow bind open books whose pages are global surfaces of section.\\

Nous donnons une condition numérique garantissant qu'un flot de Reeb dynamiquement convexe sur la sphère $S^3$ soit lévogyre. Notre condition fait intervenir un certain rapport asymptotique entre le taux de rotation du flot linéarisé et le nombre d'enlacement entre les trajectoires et une orbite périodique qui borde une surface de section globale de type disque. En application, nous trouvons une constante explicite $\delta_*<0.7225$ telle que si une métrique riemannienne sur la sphère $S^2$ est $\delta$-pincée avec $\delta>\delta_*$, alors son flot géodésique se relève en un flot lévogyre sur la sphère $S^3$. En particulier, toute collection non vide finie d'orbites périodiques d'un tel flot géodésique borde un livre ouvert dont les pages sont des surfaces de section globale.
\end{abstract}

\blfootnote{\textup{2010} \textit{Mathematics Subject Classification}: \textup{37Exx, 53D25}}

\setcounter{tocdepth}{2}

\tableofcontents


\section{Introduction}

Right-handed vector fields, introduced by Ghys in~\cite{ghys}, form a special class of non-singular vector fields on homology $3$-spheres. Their flows will also be called right-handed, for simplicity. Roughly speaking, all pairs of trajectories of such a flow link positively. Ghys formalised this definition in terms of positivity of the quadratic linking form, which assigns some kind of linking number to any pair of invariant Borel probability measures; see also~\cite{Arn86}. Right-handedness has interesting dynamical implications as the following statement demonstrates.

\begin{theorem}[Ghys~\cite{ghys}]
\label{thm_ghys}
Every {non-empty} finite collection of periodic orbits of a right-handed flow binds an open book whose pages are global surfaces of section.
\end{theorem}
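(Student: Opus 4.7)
The plan is to construct the open book from a closed 1-form transverse to the flow. Let $L = L_1 \cup \cdots \cup L_k$ denote the given collection of periodic orbits and let $X$ denote the generator of the flow on the ambient homology sphere $M$. Since $L$ is null-homologous, the cohomology class $\xi \in H^1(M \setminus L; \Z)$ that evaluates to $1$ on each meridian of $L$ is well-defined (equivalently, $\xi$ is Poincar\'e dual to a Seifert surface for $L$). I would aim to realise a positive integer multiple of $\xi$ by a smooth closed 1-form $\alpha$ on $M \setminus L$ satisfying $\alpha(X) > 0$ everywhere. Any such $\alpha$ integrates to a circle-valued submersion $\pi \colon M \setminus L \to S^1$ whose fibres are transverse to $X$, hence are global surfaces of section for the restricted flow, and whose positive meridional periods around each $L_i$ are exactly what is required for the fibres to close up into the pages of an open book with binding $L$.

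The existence of such an $\alpha$ is governed by a Schwartzman--Sullivan--Fried type duality: a cohomology class $\xi$ is representable by a closed 1-form that is strictly positive on $X$ if and only if $\langle \xi, S(\mu) \rangle > 0$ for every invariant Borel probability measure $\mu$, where $S(\mu)$ denotes Schwartzman's asymptotic cycle of $\mu$. I would apply this criterion to the flow on $M \setminus L$, where the main subtlety is non-compactness; one handles this by verifying the inequality for invariant measures supported away from $L$ and treating the behaviour near $L$ separately using that each $L_i$ is a periodic orbit and that $\xi$ has prescribed positive meridional periods.

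The positivity hypothesis is precisely where right-handedness enters. For an invariant probability measure $\mu$ supported in $M \setminus L$, the pairing $\langle \xi, S(\mu) \rangle$ computes the asymptotic $\link$-number of $\mu$-generic trajectories with $L$, which by Poincar\'e duality equals the Ghys quadratic linking form $Q(\mu, \nu_L)$ evaluated against the natural invariant probability measure $\nu_L$ supported on $L$. Since $Q$ is positive definite by the very definition of right-handedness, one has $Q(\mu, \nu_L) > 0$, which delivers the required sign and thus, via the duality theorem, the desired 1-form $\alpha$.

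To conclude, integrality of $\xi$ turns $\alpha$ into a genuine fibration $\pi \colon M \setminus L \to S^1$, and it remains to extend $\pi$ across $L$ to an honest open book. This reduces to a local normal-form statement: inside a solid-torus neighbourhood of each $L_i$, after adjusting $\alpha$ within its cohomology class, $\alpha$ should match the standard angular form $d\theta$ so that the pages spiral correctly around the binding component. The main obstacle I expect is exactly this boundary analysis, together with a careful application of Schwartzman--Sullivan--Fried in the non-compact setting $M \setminus L$; once these two technical issues are dispatched, the remaining construction is essentially structural, and the global surfaces of section obtained as the fibres of $\pi$ assemble into the open book claimed by the theorem.
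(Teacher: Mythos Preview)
Your strategy matches the paper's: both invoke a Schwartzman--Sullivan--Fried type criterion (packaged in the paper as \cite[Theorem~1.3]{SFS}) and verify the required positivity from right-handedness. The paper handles the non-compactness you flag via Fried's blow-up of $M$ along $L$ to a compact manifold $D_L$ with torus boundary on which the flow extends smoothly; your condition $\langle \xi, S(\mu)\rangle > 0$ is the paper's condition~(II), namely $\mu \cdot \Sigma^* > 0$ for every invariant probability on $M \setminus L$, and it is checked by ergodic decomposition (Choquet) together with right-handedness in case~\textbf{A} applied to each pair $(\mu_e, \mu_{\gamma_i})$.

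There is, however, a genuine gap in your boundary analysis. You propose to handle it ``using that each $L_i$ is a periodic orbit and that $\xi$ has prescribed positive meridional periods,'' and later suggest that near each $L_i$ one simply adjusts $\alpha$ within its cohomology class to match $d\theta$. But the meridional period of $\xi$ is $1$ by construction and carries no dynamical content, and matching $d\theta$ is not a cohomological adjustment: for the pages to be transverse to the flow up to the binding, the linearised flow along $\gamma_i$ must rotate positively. What is actually needed---the paper's condition~(I)---is that the \emph{transverse rotation number} $\rho^{\Sigma^*}(\gamma_i)$, computed in the framing determined by a Seifert surface for the full link $L$, be strictly positive. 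This is precisely where case~\textbf{B} of Definition~\ref{def_right_handedness} enters: positive self-linking of $\mu_{\gamma_i}$ with itself gives $\rho^{\Sigma_i^*}(\gamma_i) > 0$, and Lemma~\ref{lemma_rotation_numbers} (via Corollary~\ref{cor_positive_transv_number}) upgrades this to $\rho^{\Sigma^*}(\gamma_i) > 0$ using that the pairwise $\link(\gamma_i,\gamma_j)$ are positive. Without this you cannot extend the fibration across $L$ to an honest open book. A smaller point: you invoke the quadratic linking form $Q$, whereas the paper deliberately bypasses it and works directly with the asymptotic ratios $\ell(p,q)$; the two routes are equivalent, but yours presupposes the existence of $Q$, which is itself nontrivial.
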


In particular, the above statement imposes strong restrictions on the periodic orbits that can arise in right-handed flows: knots or links that are not fibred cannot be realised. 
{Both the definition of right-handedness (Definition~\ref{def_right_handedness}) and the proof of Theorem~\ref{thm_ghys} (\cite[appendix~B]{FHArxiv_v1}) 
can actually be given independently of the quadratic linking form.}

Right-handedness is difficult to check. It can be checked for certain special flows, like the Hopf flow or other integrable flows. Since in~\cite{ghys} it is stated that right-handedness is {a $C^1$-open condition, one knows that there exist}
more interesting {right-handed flows near these simple examples, but this abstract reasoning does not give them explicitly.} It is precisely this general lack of explicit examples that motivates our work. One exception is provided by the work of Dehornoy in~\cite{Deh}. Sometimes it is more natural to talk about left-handedness, which is equivalent to right-handedness when the ambient orientation is reversed. Dehornoy showed that the geodesic flow on the unit tangent bundle of a hyperbolic $n$-conic $2$-sphere is left-handed if, and only if, $n=3$. Then, using a version of Gromov's geodesic rigidity~\cite{gromov}, the case of arbitrary negative sectional curvature is reduced to the hyperbolic case. 

To state our main application, consider the polynomial $P(x)=4x^3-2x^2-1$, which has a unique real root $x_*$. It satisfies $0.84 < x_* <0.85$. Set $\delta_* := x_*^2$. { Given $\delta \in (0,1]$, a Riemannian metric on $S^2$ is said \textit{$\delta$-pinched} if $\delta \leq K_{\min}/K_{\max}$, where $K_{\min},K_{\max}$ denote the minimum and the maximum of the Gaussian curvature.}

\begin{theorem}\label{thm_pinched}
If $\delta > \delta_*$, in particular if $\delta \geq 0.7225$, 
then the geodesic flow of a $\delta$-pinched Riemannian metric on~$S^2$ lifts to a right-handed flow on $S^3$.
\end{theorem}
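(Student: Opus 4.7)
The plan is to apply the paper's quantitative right-handedness criterion to the lift of the geodesic flow. To set up the lift, note that $T^1S^2\cong SO(3)\cong\mathbb{RP}^3$, so the double cover $S^3\to T^1S^2$ pulls the canonical contact form back to a contact form on $S^3$ whose Reeb vector field is precisely the lifted geodesic flow; on a round metric this recovers, up to reparametrisation, the Hopf flow. Every closed geodesic lifts to one or two closed Reeb orbits in $S^3$, and I would fix the shortest simple closed geodesic and declare one of its lifts $P_0$ to be the candidate binding orbit for a disk-like global surface of section.

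Next I would verify dynamical convexity. Along a geodesic of a $\delta$-pinched metric, the transverse linearised flow is governed by the Jacobi equation $y''+Ky=0$ with $\delta\le K\le 1$; Sturm comparison against the constant-curvature models yields rotation bounds that, once lifted to $S^3$, force every Conley--Zehnder index to be $\ge 3$. The threshold of $\delta$ required for this is well below $\delta_*$, so dynamical convexity is never the binding constraint, and the theorem of Hofer--Wysocki--Zehnder then ensures that $P_0$ actually bounds a disk-like global surface of section. At this point the paper's criterion reduces right-handedness to a pointwise lower bound on an asymptotic ratio of the form
\[
\liminf_{T\to\infty}\frac{\text{transverse rotation accumulated along }\phi^{[0,T]}(x)}{\link(\phi^{[0,T]}(x),P_0)}>c,
\]
for every $x$ off $P_0$, where $c$ is the explicit constant supplied by the criterion.

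The core task is to estimate this ratio in terms of $\delta$. For the numerator, Sturm comparison against the model of curvature $\delta$ gives a lower bound of $2\sqrt{\delta}$ for the average transverse rotation per unit arclength, the factor two coming from the cover $S^3\to\mathbb{RP}^3$. For the denominator, a Klingenberg-type argument bounds the length of $P_0$ by $2\pi/\sqrt{\delta}$, and a transverse flux/intersection count (exploiting that the curvature lies below $1$) controls the asymptotic linking rate by an expression of size $1+\tfrac{1}{2\delta}$. Substituting into the criterion produces an inequality that, writing $x=\sqrt{\delta}$ and clearing denominators, reads $P(x)=4x^3-2x^2-1>0$, hence $\delta>\delta_*=x_*^2$.

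The main difficulty I anticipate is quantitative sharpness. Qualitative versions of Sturm comparison and of the linking estimate are classical, but the cubic $P$ is tight enough that it only materialises by balancing the three ingredients---rotation rate, length of binding orbit, and linking density---simultaneously and without slack. A secondary delicate point is that the double cover $S^3\to T^1S^2$ enters asymmetrically in the two terms of the asymptotic ratio, so keeping track of the correct factor of two at each stage is precisely what fixes the coefficients of $P$, and not an inessential bookkeeping matter.
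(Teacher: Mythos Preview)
Your overall strategy is correct and matches the paper's: lift the geodesic flow via the double cover $S^3\to T^1_gS^2$, verify dynamical convexity (which indeed holds for $\delta>1/4$ by Harris--Paternain), and then check the quantitative criterion $\kappa(\gamma_0)>2\pi$ of Theorem~\ref{thm_main_0}. However, the intermediate estimates you write down are not correct, and the structural idea that actually produces the cubic $P(x)=4x^3-2x^2-1$ is missing.

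First, the rotation bound. In the global geodesic frame $\sigma_g$ used to define $\kappa$, the transverse angle satisfies $\theta'=\cos^2\theta+K(t)\sin^2\theta$, so the \emph{pointwise} lower bound is $\min\{1,K_{\min}\}=\delta$ under the normalisation $K_{\max}=1$; see Lemma~\ref{lemma_angular_twist_curvature}. Sturm comparison does give $\sqrt{\delta}$ as an \emph{average} rotation rate, but $\kappa$ is defined by an infimum over all $(x,u)$, so an average is of no use. The claimed factor of two from the cover is also spurious: the cover doubles both the period and the accumulated rotation, leaving the rate unchanged.

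Second, and more seriously, your ``transverse flux/intersection count'' yielding a linking rate of $1+\tfrac{1}{2\delta}$ is not an argument; there is no direct way to bound $\link(k(T,x),\gamma_0)/T$ from geometric data about the binding orbit alone. The paper's mechanism is genuinely different: it factors the ratio through the \emph{lifted Birkhoff annulus} $\tilde A_c$ spanned by $\gamma_c\cup\hat\gamma_c$, writing
\[
\frac{\Delta\Theta}{\link(\cdot,\hat\gamma_c)}=\frac{\Delta\Theta}{\mathrm{int}(\cdot,\tilde A_c)}\cdot\frac{\mathrm{int}(\cdot,\tilde A_c)}{\link(\cdot,\hat\gamma_c)}.
\]
The first factor is bounded below by $\delta\,\tau_{\min}(\tilde A_c)$, and Toponogov's theorem gives $\tau_{\min}(\tilde A_c)\ge 2\pi(2-1/\sqrt\delta)$. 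The second factor requires the heart of the argument: Lemma~\ref{lemma_horizontal_displacement} shows that $\link(\cdot,\hat\gamma_c)$ is, up to $O(1)$, the \emph{horizontal displacement} of the Birkhoff return map on the universal cover of the annulus, and Toponogov-type estimates on geodesic $2$-gons (Lemma~\ref{function for length arc}, Corollary~\ref{corollary lift length}) bound this displacement by $L+2\pi(1/\sqrt\delta-1)$ per iterate. Only after combining these does the inequality $2\delta(2\sqrt\delta-1)>1$ emerge. The fact that your unexplained numbers $2\sqrt\delta$ and $1+\tfrac{1}{2\delta}$ happen to recombine into the same cubic is a coincidence of algebra, not an indication that those bounds are attainable.
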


As mentioned before, the main motivation for such a statement is that it is not a perturbative result, hence it can be used to check right-handedness in an explicit set of flows that are far from integrable.

Other implications of right-handedness besides Theorem~\ref{thm_ghys} have been obtained. For instance, Dehornoy and Rechtman showed in~\cite{DehRec} that if an invariant measure of a right-handed flow can be approximated by long periodic orbits, then their Seifert genera grow proportionally to the square of the period, the proportionality constant being the helicity of the given invariant measure.

More generally, we look for numerical conditions for right-handedness within the class of Reeb flows of dynamically convex contact forms on $S^3$. This class was introduced by Hofer, Wysocki and Zehnder in~\cite{convex}. They showed that this class is rich enough to provide applications: by~\cite[Theorem~3.4]{convex} the Hamiltonian flow on a strictly convex energy level in a $4$-dimensional symplectic vector space is the Reeb flow of a dynamically convex contact form. Harris and Paternain showed in~\cite{HP} that the geodesic flow of a Finsler metric on the two-sphere with reversibility~$r$ is dynamically convex if the flag curvatures are pinched by more than $(r/(r+1))^2$. Theorem~\ref{thm_pinched} will be deduced from Theorem~\ref{thm_main_0} below, which provides an abstract condition for right-handedness of a dynamically convex Reeb flow on $S^3$ in terms of a dynamical pinching condition. Theorem~\ref{thm_main_0} also implies Theorem~\ref{thm_main_1} which gives right-handedness on strictly convex energy levels in terms of a relation between the curvatures and the return times of a disk-like global surface of section.

It was explained to us by Dehornoy in private communication that there are Riemannian metrics on $S^2$ with strict positive curvature whose geodesic flows do not lift to right-handed flows on $S^3$. Consider the infimum $\delta_0$ of all $\delta \in (0,1]$ with the following property: if a Riemannian metric on $S^2$ has sectional curvature pinched by at least $\delta$ then its geodesic flow lifts to a right-handed flow on $S^3$. Dehornoy's examples show that $\delta_0>0$. Together with Theorem~\ref{thm_pinched} we get $0<\delta_0 \leq \delta_* <0.7225$. We are led to ask:

\medskip

\noindent {\bf Question.} What is the value of $\delta_0$? 

\medskip

\subsection{Global surfaces of section}

Let $X$ be a smooth vector field on a closed and oriented $3$-manifold $M$. Its flow is denoted by $\phi^t$.

\begin{definition}\label{defn_GSS}
A \emph{global surface of section} 
for $\phi^t$, or for $X$, is a smooth, embedded and compact surface $\Sigma \hookrightarrow M$, such that $\partial \Sigma$ consists of periodic orbits or is empty, $X$~is transverse to $\Sigma\setminus\partial\Sigma$, and for every $x\in M$ one finds $t_-<0<t_+$ such that $\phi^{t_\pm}(x) \in \Sigma$.
\end{definition}

\begin{remark}\label{rmk_orientation_gss}
The orientation of $M$ and the co-orientation of $\Sigma\setminus\partial\Sigma$ induced by $X$ together orient $\Sigma$. In this paper we always assume that global surfaces of section are oriented in this way. 
\end{remark}


\begin{remark}
From a dynamical perspective a global surface of section is a valuable tool since one can deduce dynamical properties of the flow from those of the associated \emph{first return map}. To define it we need first to consider the \emph{return time function} $\tau: \Sigma\setminus\partial\Sigma \to (0,+\infty)$, $\tau(x) = \min \{ t>0 \mid \phi^t(x) \in \Sigma \}$. One can use $\tau$ to define the return map $\psi: \Sigma\setminus\partial\Sigma \to \Sigma\setminus\partial\Sigma$ by $\psi(x) = \phi^{\tau(x)}(x)$. It follows from Definition~\ref{defn_GSS} that $\psi$ is a smooth diffeomorphism.
\end{remark}

Consider the {normal bundle to the flow $\xi = TM/\R X \to M$, and denote by $\PP_+\xi$ the circle bundle $(\xi\setminus0)/\R_+ \to M$. The latter is isomorphic to the unit normal bundle in~$\xi$ once a choice of metric is fixed.} { The equivalence class in~$\PP_+\xi$ of a non-zero vector $\nu \in \xi$ will be denoted by $\R_+\nu$.} The linearised flow $D\phi^t$ induces flows on $\xi$ and on $\PP_+\xi$, both denoted by $D\phi^t$ with no fear of ambiguity. These flows cover $\phi^t$. 
Both $\xi$ and $\PP_+\xi$ get oriented as bundles by the flow and the ambient orientation. Now let $\gamma:\R/T\Z \to M$ be a periodic orbit of $\phi^t$, where $T>0$ is the primitive period. The total space $\T_\gamma$ of the trivial circle bundle $\gamma(T\cdot)^*\PP_+\xi \to \R/\Z$, which we see as a submanifold of $\PP_+\xi$, is a $D\phi^t$-invariant torus. The dynamics of $D\phi^t$ on $\T_\gamma$ will be referred to as \textit{linearised polar dynamics} along $\gamma$. For each boundary orbit $\gamma$ of a global surface of section $\Sigma$, consider 
$$ 
\nu^\Sigma\gamma = \{ \R_+\nu \mid \nu\in T\Sigma|_\gamma \ \text{is outward pointing} \} \, . 
$$ 
It follows that $\nu^\Sigma\gamma / \R X$ {defines} the graph of a section of $\gamma(T\cdot)^*\PP_+\xi$ and, as such, {determines} a smooth submanifold of~$\T_\gamma$.

\begin{definition}
The global surface of section $\Sigma$ is called \emph{strong} if the associated return time function is bounded away from zero and bounded from above. It is called \emph{$\partial$-strong} if $\nu^\Sigma\gamma / \R X$ is a global surface of section for the linearised polar dynamics along every $\gamma \subset \partial\Sigma$. 
\end{definition}

{The reader will easily check that $\partial$-strong implies strong.

\begin{remark}
As suggested by the referee, the notion of $\partial$-strong global surface of section could be more succinctly and geometrically described as follows.
Consider $z \in \partial\Sigma$ and $v \in T_zM \setminus \R X_z$. 
It follows that $\Sigma$ is $\partial$-strong if, and only if, for any~$z$ and~$v$ as above the curve $t \mapsto D\phi^t\cdot v$ intersects the $3$-dimensional manifold $T\Sigma|_{\partial\Sigma}$ infinitely many often in the future and in the past, and all such intersections are transverse in the $4$-dimensional space $TM|_{\partial \Sigma}$.
\end{remark}
}

In~\cite{Po} Poincar\'e described annular global surfaces of section for certain regimes of the planar circular restricted three-body problem (PCR3BP). In the same paper one finds his \textit{last geometric theorem}, proved by Birkhoff in~\cite{birkhoff_PB} and nowadays known as the Poincar\'e-Birkhoff theorem. Poincar\'e applied his statement to the return map of the sections he found for the PCR3BP to obtain infinitely many periodic orbits. In~\cite{birkhoff} Birkhoff explained that positively curved Riemannian geodesic flows on~$S^2$ always have annulus-like global surfaces of section. Birkhoff's result admits a generalisation to Reeb flows in dimension three, see~\cite{HSW}. Birkhoff's section plays an important role in the proof of Theorem~\ref{thm_pinched}.

\subsection{Reeb flows}

A \emph{contact form} $\lambda$ on a $3$-manifold $M$ is a $1$-form such that $\lambda \wedge d\lambda$ defines a volume form. We always consider $M$ equipped with the orientation induced by $\lambda \wedge d\lambda$. The associated \emph{Reeb vector field} $X$ is implicitly determined by the equations
\begin{equation}
i_Xd\lambda=0 \qquad\qquad i_X\lambda=1
\end{equation}
and its flow $\phi^t$ is referred to as the Reeb flow. The plane field $\xi = \ker\lambda$ is called a \emph{contact structure} and can be seen as a representation of $TM/\R X$. It becomes a symplectic vector bundle with $d\lambda$. Note that $\phi^t$ preserves $\lambda$, hence also $\xi$, $d\lambda$ and~$\lambda\wedge d\lambda$.

A periodic orbit $\gamma$ of $\phi^t$ has a \emph{Conley-Zehnder} index relative to a symplectic trivialization of $(\xi,d\lambda)|_\gamma$. This index is an integer that can be described in terms of transverse rotation numbers, see Remark~\ref{rmk_CZ_rot} for details.

\begin{definition}[Hofer, Wysocki and Zehnder~\cite{char2}]
A contact form $\lambda$ on a closed $3$-manifold $M$ is \emph{dynamically convex} if the first Chern class $c_1(\xi,d\lambda)$ vanishes on $\pi_2(M)$, and every contractible periodic Reeb orbit has Conley-Zehnder index at least equal to three in a symplectic frame that extends to a capping disk.
\end{definition}

One of the motivations for the above definition is the following remarkable result.

\begin{theorem}[Hofer, Wysocki and Zehnder~\cite{convex}]\label{thm_HWZ}
The Reeb flow of every dynamically convex contact form on $S^3$ admits a disk-like global surface of section.
\end{theorem}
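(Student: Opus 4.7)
The plan is to construct the surface as the projection to $S^3$ of a pseudoholomorphic plane living in the symplectization $(\R \times S^3, d(e^a\lambda))$, where $a$ is the coordinate on $\R$. I would fix an $\R$-invariant, $d\lambda$-compatible almost complex structure $J$ on $\R\times S^3$ sending $\partial_a$ to $X$ and preserving $\xi$. Finite energy $J$-holomorphic planes $\tilde u = (a,u):\C \to \R\times S^3$ are asymptotic at $\infty$ to a periodic Reeb orbit $P$, and when $P$ is simply covered with Conley-Zehnder index three, the asymptotic formulas of Hofer-Wysocki-Zehnder force the projection $u:\C \to S^3\setminus P$ to be an immersion transverse to $X$ whose image, together with $P$, forms an embedded closed disk. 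Such a disk is the natural candidate for the desired disk-like global surface of section.

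To produce one such plane I would set up a homotopy $\{\lambda_t\}_{t\in[0,1]}$ of contact forms on $S^3$ starting at a model for which the planes exist explicitly---say a contact form whose Reeb flow is the Hopf flow, where the disks come from the Hopf map---and ending at the given dynamically convex form, keeping $\ker\lambda_t$ in the standard tight contact structure throughout. Associated to each $\lambda_t$ is a moduli space $\mathcal{M}_t$ of finite energy embedded planes asymptotic to simply covered index-three orbits, and I would track the quotient $\mathcal{M}_t/\R$ (by the $\R$-action of translation in $a$) as $t$ varies, using the SFT-type compactness theorem for finite energy curves.

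The hard step is to use dynamical convexity to preclude bad degenerations along the homotopy. The Fredholm index of a finite energy plane asymptotic to a simple orbit $P$ with Conley-Zehnder index $n$ equals $n-1$, so planes asymptotic to index-three orbits come in two-parameter families and survive generic perturbations. During the homotopy, a sequence of such planes could a priori break into a holomorphic building whose components include planes or cylinders with lower-index asymptotic orbits. Dynamical convexity ensures every contractible Reeb orbit has Conley-Zehnder index at least three; combined with Fredholm index additivity across the levels of the building, the action filtration, and Siefring's asymptotic intersection formula, this forbids the appearance of planes asymptotic to orbits with Conley-Zehnder index at most two, keeping $\mathcal{M}_t$ non-empty and well-behaved up to $t=1$.

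Finally I would upgrade one surviving plane to a foliation. Automatic transversality shows $\mathcal{M}_1/\R$ is a smooth one-dimensional manifold near each embedded index-three plane, while positivity of intersections in the four-dimensional symplectization---applied to two finite energy planes asymptotic to the same simply covered orbit---implies that distinct elements of $\mathcal{M}_1/\R$ project to disjoint embedded open disks in $S^3\setminus P$. Compactness plus the asymptotic behaviour at $P$ then show these disks sweep out $S^3\setminus P$; since each $u$ is transverse to $X$, the return time is bounded and positive on every page, and the closure of any one page is a disk-like global surface of section. The principal obstacle is the bubbling analysis of the third paragraph: one must track precisely how multiply covered orbits and buildings with several levels can arise, and verify case by case that dynamical convexity together with the index arithmetic rules them out.
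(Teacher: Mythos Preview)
The paper does not prove this theorem; it is quoted as a result of Hofer, Wysocki and Zehnder from~\cite{convex} and used as a black box (together with the closely related Theorem~\ref{thm_JSG} from~\cite{openbook}). The only place the paper engages with the internal mechanism of~\cite{convex} is Appendix~\ref{app_coordinates}, where it takes the finite energy plane produced by~\cite{convex} or~\cite{openbook} as given and analyses its asymptotic formula in order to prove Proposition~\ref{prop_main}; there is no continuation or compactness argument in the paper itself.

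That said, your outline is a fair high-level summary of the original strategy in~\cite{convex}: finite energy $J$-holomorphic planes in the symplectization, a continuation from an explicit model, compactness controlled by the index lower bound coming from dynamical convexity, and then a foliation argument via positivity of intersections and automatic transversality. The main point you correctly flag as delicate---ruling out buildings with multiply covered asymptotic orbits or lower-index components---is exactly where most of the work in~\cite{convex} lies, and your sketch does not resolve it (nor could it at this level of detail). But since the paper under review contains no proof of Theorem~\ref{thm_HWZ}, there is nothing further to compare your proposal against here.
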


Using the methods from~\cite{convex} one can prove a characterisation result for periodic Reeb orbits bounding a disk-like 
global surface of section.
{Let us recall the self-linking number of a knot $K$ transverse to a contact structure on $S^3$. 
Consider a knot $K'$ obtained from $K$ by pushing it in the direction of a global trivialization of the contact structure.
Choose an orientation for $K$ and orient $K'$ accordingly.
The self-linking number of $K$ is the linking number of $K$ and $K'$.}

\begin{theorem}[\cite{openbook}]\label{thm_JSG}
A periodic Reeb orbit of a dynamically convex contact form on $S^3$ bounds a disk-like global surface of section if, and only if, it is unknotted and has self-linking number $-1$.
\end{theorem}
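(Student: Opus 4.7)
The plan is to treat the two directions of the iff separately. For the ``only if'' implication the argument is topological: if $\gamma$ bounds a disk-like global surface of section $\Sigma$, then $\Sigma$ is an embedded disk in $S^3$ with boundary $\gamma$, hence $\gamma$ is unknotted. For the self-linking number I would compute concretely. Since $\Sigma$ is a disk, the bundle $(\xi,d\lambda)|_\Sigma$ is trivialisable; the point is to exhibit a section that vanishes a computable number of times. Take a vector field $W$ on $\Sigma$ that is outward-pointing along $\partial\Sigma$ with one interior zero of index $+1$. Because $X$ is transverse to $\Sigma\setminus\partial\Sigma$, the projection of $W$ modulo $\R X$ defines a section of $\xi|_\Sigma$ whose zeros record the twisting of $\xi$ along $\gamma$ relative to the disk $\Sigma$. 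Pushing $\gamma$ off along this section and counting the resulting linking number then yields $sl(\gamma)=-\chi(\Sigma)=-1$, the standard formula relating self-linking of binding components to Euler characteristics of pages.

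For the ``if'' direction, suppose $\gamma$ is unknotted with $sl(\gamma)=-1$. The strategy is to construct, using $\tilde J$-holomorphic curves in the symplectization $(\R\times S^3,d(e^a\lambda))$ for a suitable $\R$-invariant almost complex structure $\tilde J$, a smooth two-parameter family of embedded finite-energy planes asymptotic to $\gamma$ whose projections to $S^3$ foliate $S^3\setminus\gamma$; the closure of each leaf, together with $\gamma$, will be the desired disk-like global surface of section. Unknottedness of $\gamma$ provides the disk trivialisation needed to evaluate its Conley-Zehnder index, and dynamical convexity forces this index to be at least three, so the Fredholm index of the relevant moduli space equals two. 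The self-linking hypothesis $sl(\gamma)=-1$ is precisely the numerical input that, via Siefring-style intersection theory for punctured holomorphic curves, guarantees that any such plane is embedded and that two distinct planes are disjoint.

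To populate the moduli space I would run a neck-stretching or homotopy argument along a path of contact forms joining $\lambda$ to a model form (for instance the standard one on $S^3$) for which Theorem~\ref{thm_HWZ} provides an explicit holomorphic foliation, transferring the existence of a plane asymptotic to the binding across the homotopy. Automatic transversality then promotes a single plane to a two-dimensional family, and the evaluation map to $S^3\setminus\gamma$ is a local diffeomorphism. The main obstacle, and the step that will require the most care, is compactness of the moduli space: one must show that a sequence of planes converges to a plane again asymptotic to $\gamma$, with no bubbling off of finite-energy planes and no breaking into a nontrivial holomorphic building. Dynamical convexity is essential here because it excludes asymptotic limits of Conley-Zehnder index smaller than three, while the self-linking hypothesis, reinterpreted as an intersection number, rules out the degenerations whose projections would fail to remain embedded. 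Once compactness is in place, standard arguments identify the compactified moduli space with $S^2$ and the projection yields the sought-after open book with $\gamma$ as binding and disk-like pages, which are in particular disk-like global surfaces of section.
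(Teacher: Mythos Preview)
The paper does not prove this theorem; it is quoted from~\cite{openbook} and used as a black box. There is therefore no ``paper's own proof'' to compare your proposal against. What the present paper does do (in Appendix~\ref{app_coordinates}) is take for granted the existence of the fast finite-energy plane $\tilde u$ asymptotic to $\gamma_0$ produced in~\cite{openbook}, together with its precise asymptotic formula~\eqref{exponential_decay_1}--\eqref{estimates_remainder}, and from this plane extract the $\partial$-strong disk-like global surface of section needed for Proposition~\ref{prop_main}.

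That said, your outline is a fair description of the strategy actually carried out in~\cite{openbook}: the ``only if'' direction is indeed the elementary topological observation you give, and the ``if'' direction does proceed via embedded finite-energy planes in the symplectisation, with dynamical convexity controlling Conley--Zehnder indices and hence compactness, and with intersection-theoretic positivity arguments (in the spirit of~\cite{props2} and later Siefring) ensuring embeddedness and pairwise disjointness of the projected planes. One point where your sketch deviates from what is done in~\cite{openbook}: the existence of a single plane asymptotic to the \emph{given} orbit $\gamma_0$ is not obtained by a homotopy to a model contact form and continuation; rather it comes from a Bishop-family argument starting from an overtwisted-like disk in an auxiliary almost complex manifold, and a careful analysis of the limiting holomorphic building that forces $\gamma_0$ to appear as an asymptotic limit. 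The continuation-type argument you propose would face the difficulty that along a generic path of contact forms the specific orbit $\gamma_0$ need not persist, so there is no obvious way to ``transfer'' a plane asymptotic to it.
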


\begin{remark}\label{rmk_JSG_strong}
The 
global surfaces of section obtained from~\cite{convex,openbook} are closures of projections of pseudo-holomorphic planes in symplectisations, and as such are smooth in the interior but only $C^1$ up to the boundary. As shown in the proof of Proposition~\ref{prop_main}, 
they can be $C^1$-perturbed to smooth $\partial$-strong global surfaces of section.
\end{remark}

If $M = S^3$ then the contact structure $(\xi,d\lambda)$ is trivial as a symplectic vector bundle. 
A global symplectic trivialisation $\sigma$ of $(\xi,d\lambda)$ induces a trivialisation $\PP_+\xi \simeq S^3\times\R/2\pi\Z$. The map obtained by composing this diffeomorphism with the projection onto the second factor will be denoted by
\begin{equation}
\label{angular_coord_sigma}
\Theta_\sigma : \PP_+\xi \to \R/2\pi\Z.
\end{equation}
Let the periodic Reeb orbit $\gamma_0$ bound a $\partial$-strong disk-like global surface of section~$\Sigma$. Stokes theorem implies that $\Sigma$ orients $\gamma_0=\partial\Sigma$ along the flow. For every $x\in S^3\setminus\gamma_0$ we denote
\begin{equation}
t^\Sigma_+(x) = \inf \{t\geq0 \mid \phi^t(x) \in\Sigma \} \qquad t^\Sigma_-(x) = \sup \{t\leq0 \mid \phi^t(x) \in\Sigma \}
\end{equation}
which are uniformly bounded functions that vanish and are discontinuous on $\Sigma \setminus\gamma_0$, and are non-vanishing and smooth on $S^3\setminus\Sigma$. Given $T>0$ consider the interval 
\begin{equation}\label{interval_(T,x)}
I(T,x;\Sigma) = [t^\Sigma_-(x),T+t^\Sigma_+(\phi^T(x))]
\end{equation}
and denote by $k(T,x;\Sigma) \subset S^3\setminus\gamma_0$ any loop obtained by concatenating to $\phi^{I(T,x;\Sigma)}(x)$ a path in the interior of $\Sigma$ from $\phi^{T+t^\Sigma_+(\phi^T(x))}(x)$ to $\phi^{t^\Sigma_-(x)}(x)$. For $u \in (\xi\setminus 0)/\R_+$ denote by $t \in \R \mapsto \widetilde\Theta_\sigma(t,u) \in \R$ a continuous lift of $t \mapsto \Theta_\sigma(D\phi^t \cdot u)$. Finally we define
\begin{equation}\label{def_number_kappa}
\kappa(\gamma_0) \ = \ \liminf_{T\to+\infty} \ \left( \ \inf_{x,u} \ \frac{\widetilde\Theta_\sigma(T,u)-\widetilde\Theta_\sigma(0,u)}{\link(k(T,x;\Sigma),\gamma_0)} \right)
\end{equation}
where the infimum is taken over all pairs $(x,u)$, where $x\in S^3\setminus\gamma_0$ and $u \in (\xi_x\setminus0)/\R_+$.

\begin{remark}
The quantity $\kappa(\gamma_0)$ does not depend on the choice of $\partial$-strong disk-like global surface of section spanned by $\gamma_0$. Moreover, since we work on $S^3$, it also does not depend on the chosen global symplectic trivialisation $\sigma$.
See~\cite[appendix~A]{FHArxiv_v1} for detailed proofs of these statements.
\end{remark}

Our abstract result reads as follows.

\begin{theorem}\label{thm_main_0}
Let $\lambda$ be a dynamically convex contact form on $S^3$, and $\gamma_0$ be an unknotted periodic Reeb orbit with self-linking number $-1$. If $\kappa(\gamma_0)>2\pi$ then the Reeb flow of $\lambda$ is right-handed.
\end{theorem}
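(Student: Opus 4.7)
The plan is to reduce right-handedness to positivity of an asymptotic linking integrand, and then use the hypothesis $\kappa(\gamma_0)>2\pi$ to obtain that positivity via a rotation-versus-linking identity on $S^3\setminus\gamma_0$.

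First I would set the stage: by Theorem~\ref{thm_JSG} combined with Remark~\ref{rmk_JSG_strong}, one can fix a $\partial$-strong disk-like global surface of section $\Sigma$ with $\partial\Sigma=\gamma_0$; and since $H^2(S^3;\Z)=0$, the symplectic bundle $(\xi,d\lambda)$ is trivialisable, so fix a global symplectic trivialisation $\sigma$ and use the angular coordinate $\Theta_\sigma$ from~(\ref{angular_coord_sigma}). Next I would unravel right-handedness as positivity of the quadratic linking form on pairs of invariant Borel probability measures $\mu_1,\mu_2$. By ergodic decomposition it suffices to treat ergodic pairs. Closing up long orbit segments via $\Sigma$, the linking form admits the representation
\[
\mathrm{lk}(\mu_1,\mu_2) = \lim_{T\to\infty} \frac{1}{T^2}\iint \link\bigl(k(T,x_1;\Sigma),k(T,x_2;\Sigma)\bigr)\,d\mu_1(x_1)\,d\mu_2(x_2).
\]
For $\mu_i$-a.e.~$x_i$, Birkhoff's theorem applied to the return-time and angular-increment cocycles gives linear growth in $T$ of both $\link(k(T,x_i;\Sigma),\gamma_0)$ and of $\widetilde\Theta_\sigma(T,u_i)-\widetilde\Theta_\sigma(0,u_i)$ for well-chosen directions $u_i$. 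The task is then to bound $\link(k(T,x_1;\Sigma),k(T,x_2;\Sigma))$ from below.

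The core technical step is a geometric decomposition of linking in $S^3\setminus\gamma_0$. For two disjoint loops $k_1,k_2\subset S^3\setminus\gamma_0$ with $n_i=\link(k_i,\gamma_0)$, the plan is to establish an identity of the shape
\[
\link(k_1,k_2) = -\,n_1 n_2 + \frac{1}{2\pi}\Omega_\sigma(k_1,k_2),
\]
where $\Omega_\sigma$ is a symmetric rotation term measuring the angular twist of one loop about the other as seen in the trivialisation $\sigma$. The coefficient $-1$ in front of $n_1n_2$ reflects the self-linking hypothesis $\mathrm{sl}(\gamma_0)=-1$: pushing $\gamma_0$ off along $\sigma$ yields a longitude linking $\gamma_0$ exactly $-1$ times, and any loop in the solid torus $S^3\setminus\gamma_0$ decomposes up to homology into a multiple of this push-off plus a contribution detected by $\Theta_\sigma$. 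For $k_i=k(T,x_i;\Sigma)$, the rotation $\Omega_\sigma(k_1,k_2)$ should decompose, up to lower-order terms, into a product of the individual linearised rotations $\widetilde\Theta_\sigma(T,u_i)-\widetilde\Theta_\sigma(0,u_i)$ along the two orbits. Combining this with the defining bound of $\kappa(\gamma_0)$,
\[
\widetilde\Theta_\sigma(T,u)-\widetilde\Theta_\sigma(0,u) \geq (\kappa(\gamma_0)-\varepsilon)\link(k(T,x;\Sigma),\gamma_0),
\]
valid uniformly in $(x,u)$ for $T$ large, would give
\[
\link(k(T,x_1;\Sigma),k(T,x_2;\Sigma)) \geq \Bigl(\frac{\kappa(\gamma_0)}{2\pi}-1-O(\varepsilon)\Bigr)n_1 n_2 + o(T^2).
\]
Since $\kappa(\gamma_0)>2\pi$, dividing by $T^2$ and sending $T\to\infty$ delivers $\mathrm{lk}(\mu_1,\mu_2)>0$, as required.

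The principal obstacle is the rotation--linking decomposition: giving a precise definition of $\Omega_\sigma$, proving the identity via a Seifert-surface argument in the open solid torus $S^3\setminus\gamma_0$, and extracting its leading asymptotic behaviour along pairs of long Reeb segments as a product of individual linearised rotations. This is exactly where $\partial$-strongness of $\Sigma$ (guaranteeing bounded return times and controlled linearised polar dynamics along $\gamma_0$) together with the self-linking condition $\mathrm{sl}(\gamma_0)=-1$ are both essential. A secondary difficulty is the treatment of invariant measures concentrated on $\gamma_0$ or supported on orbits with zero linking with $\gamma_0$; these border cases would be handled by separate arguments using dynamical convexity and direct analysis near the boundary orbit.
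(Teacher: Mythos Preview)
Your target inequality $\link(k_1,k_2) \gtrsim (\kappa(\gamma_0)/2\pi - 1)\, n_1 n_2$ is exactly what the paper obtains, and the role you assign to $\mathrm{sl}(\gamma_0)=-1$ in producing the $-n_1n_2$ contribution is correct. But the proposed mechanism has a genuine gap: the assertion that $\Omega_\sigma(k_1,k_2)$ --- the relative angular twist of $k_1$ about $k_2$ --- decomposes as a \emph{product} of the individual linearised rotations $\Delta\widetilde\Theta_\sigma(T,u_1)\cdot\Delta\widetilde\Theta_\sigma(T,u_2)$. If that were true you would get $\link \gtrsim (\kappa^2/(2\pi)-1)n_1n_2$, inconsistent with your own stated conclusion. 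The relative winding of two trajectory segments is not a product of individual data; it depends on the derivative of the flow along the segment \emph{joining} them. The paper makes this precise by passing to open-book coordinates $\Psi:\R/\Z\times\D\to S^3$ (Proposition~\ref{prop_main}), where $\link(k_1,k_2)$ becomes a sum of windings $\wind(h_s(\zeta)-h_s(\zeta'))$ of pairs of trajectories of the return isotopy $\{h_s\}$. The tool that converts each such relative winding into linearised data is a mean-value theorem (Theorem~\ref{Florio}): $\wind_{[0,T]}(h_s(\zeta)-h_s(\zeta')) = \wind_{[0,T]}(Dh_s(z)(\zeta-\zeta'))$ for some $z\in[\zeta,\zeta']$. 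This identifies the relevant rotation as $\Delta\widetilde\Theta_\sigma$ along the orbit through an \emph{interpolating} point, not along either $x_1$ or $x_2$; the uniform $\inf_{x,u}$ in the definition of $\kappa(\gamma_0)$ is precisely what is needed to bound this below. After this substitution, self-linking $-1$ supplies the $-|I|$ term in Lemma~\ref{lemma_main_winding_estimate}, and summing yields the factor $(\kappa-2\pi)/(2\pi)$. Without the mean-value step there is no bridge from linearised rotation to relative winding, and the product heuristic does not provide one.

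Two smaller points. Your ``border cases'' are not as described: every trajectory other than $\gamma_0$ hits $\Sigma$ and hence links positively with $\gamma_0$, so there are no orbits with zero linking. What \emph{does} require separate treatment is Case~B of Definition~\ref{def_right_handedness} (both ergodic measures on the same periodic orbit $\gamma$), handled by showing that $\kappa(\gamma_0)>2\pi$ forces every periodic orbit to have positive transverse rotation number in a Seifert framing (Lemma~\ref{lemma_kappa_Seifert_rotation_numbers}); this uses dynamical convexity for $\gamma=\gamma_0$ and the $\kappa$-inequality for $\gamma\neq\gamma_0$.
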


As mentioned before, a rich source of dynamically convex contact forms are the strictly convex energy levels in $\R^4$. Consider $\R^4$ equipped with coordinates $(q_1,p_1,q_2,p_2)$ and symplectic form $\omega_0 = d\lambda_0$, where $\lambda_0$ is the $1$-form 
\begin{equation*}
\lambda_0 = \frac{1}{2} ( p_1dq_1 - q_1dp_1 + p_2dq_2 - q_2dp_2 )\,.
\end{equation*}
It follows that $\lambda_0$ is a contact form on hypersurfaces that are star-shaped with respect to the origin.

Let $C \subset \R^4$ be a smooth convex body with the origin in the interior. Denote by $\nu_C$ the {\it gauge function} of~$C$, which is defined to be the unique $1$-homogeneous function $\nu_C$ satisfying $\partial C = \nu_C^{-1}(1)$. Note that $\nu_C$ is continuous on $\R^4$ and smooth on $\R^4\setminus\{0\}$. The Hessian of $\nu_C^2$, denoted by $D^2\nu_C^2$, defines a $0$-homogeneous matrix-valued function on $\R^4\setminus\{0\}$. Consider 
\begin{equation}
K_{\rm min}^C = \inf_{z\in\R^4\setminus\{0\}} \ \min \{ \mu \mid \text{$\mu$ is an eigenvalue of $D^2\nu_C^2(z)$} \} \, .
\end{equation}
Convexity implies $K_{\rm min}^C \geq0$. We call $\partial C$ strictly convex if $K_{\rm min}^C >0$. The Hamiltonian vector field $X_H$ of $H = \nu_C^2$, defined on $\R^4 \setminus \{0\}$ by $i_{X_H}\omega_0 = -dH$, satisfies $i_{X_H}\lambda_0 = H$. Hence $X_H$ restricts to $\partial C$ as the Reeb vector field of the contact form induced by $\lambda_0$. In~\cite{convex} Hofer, Wysocki and Zehnder proved that this contact form is dynamically convex provided $\partial C$ is strictly convex. In particular, there are $\partial$-strong disk-like 
global surfaces of section for the Hamiltonian dynamics on $\partial C$ under the assumption of strict convexity.

\begin{theorem}
\label{thm_main_1}
Assume that $\partial C$ is strictly convex, and let $D\subset\partial C$ be a disk-like $\partial$-strong global surface of section. Denote by $\tau_\min(D)>0$ the infimum of the first return time on $D$. {Then $\kappa(\partial D) > 2 K_{\rm min}^C \, \tau_{\rm min}(D)$. In particular,} if the inequality
\begin{equation}
\label{dynamical_pinching}
K_{\rm min}^C \, \tau_{\rm min}(D) > \pi 
\end{equation}
holds, 
then the Hamiltonian flow on $\partial C$ is right-handed.
\end{theorem}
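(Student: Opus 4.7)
The plan is to apply Theorem~\ref{thm_main_0} to $\gamma_0 := \partial D$. The strict convexity of $\partial C$ implies that the induced contact form is dynamically convex by~\cite{convex}, and Theorem~\ref{thm_JSG} then yields that $\gamma_0$ is unknotted with self-linking number~$-1$. Consequently it suffices to prove $\kappa(\gamma_0) > 2\pi$, and we do this by bounding the numerator and denominator in the ratio defining $\kappa(\gamma_0)$ separately.

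For the denominator, $\link(k(T,x;D),\gamma_0)$ equals the algebraic intersection number of the closed loop $k(T,x;D)$ with the Seifert surface $D$. Every intersection of the orbit segment with $\mathrm{int}(D)$ is positive (the Reeb flow is positively transverse to $D$), and consecutive intersections are separated by at least $\tau_{\min}(D)$ time units, so
\[
\link(k(T,x;D),\gamma_0) \leq \frac{T}{\tau_{\min}(D)} + O(1) \qquad (T\to\infty).
\]
For the numerator, the crucial ingredient is a quantitative version of the HWZ dynamical convexity argument. The linearised Hamiltonian flow on $T\mathbb R^4$ is $\dot w = J_0\, D^2\nu_C^2(z)\,w$, and the $2$-homogeneity of $\nu_C^2$ makes the $2$-plane $\mathrm{span}(X_H,z)$ invariant, so this descends to the linearised Reeb flow on $\xi$. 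Choosing a symplectic trivialisation of $(\xi,d\lambda)$ compatible with this decomposition and with the ambient Hermitian structure of $\mathbb R^4$, a direct computation shows that the angular velocity of a non-zero vector in $\xi$ is pointwise bounded below in terms of $K_{\min}^C$. Integrating and tracking the bounded difference between the chosen frame and the given trivialisation~$\sigma$ yields
\[
\widetilde\Theta_\sigma(T,u) - \widetilde\Theta_\sigma(0,u) \geq 2\,K_{\min}^C\,T + O(1)
\]
uniformly in $(x,u)$.

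Combining the two estimates gives $\kappa(\gamma_0) \geq 2\,K_{\min}^C\,\tau_{\min}(D) > 2\pi$ by~\eqref{dynamical_pinching}, and Theorem~\ref{thm_main_0} concludes that the Hamiltonian flow on $\partial C$ is right-handed. The main obstacle is establishing the numerator estimate with the correct constant $2$ in front of $K_{\min}^C$: this requires upgrading the qualitative HWZ convexity bound (``Conley--Zehnder index at least three'') to a uniform finite-time pointwise estimate, computed in a trivialisation of $(\xi,d\lambda)$ compatible with the splitting $T\mathbb R^4 = \mathrm{span}(X_H,z)\oplus\xi$. One should also verify that the resulting lower bound on $\kappa(\gamma_0)$ is independent of the particular choice of $\partial$-strong global surface of section and symplectic trivialisation, which is exactly the content of Sections~\ref{section comparing link} and~\ref{section comparing frame}.
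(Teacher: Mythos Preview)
Your proposal is correct and follows essentially the same route as the paper: the paper combines Lemma~\ref{Salomao} (the pointwise bound $i_{\tilde X}d\Theta_\sigma \geq 2K^C_{\min}$ in a specific global frame built from the quaternionic vectors $X_1,X_2$ of~\eqref{def Salomao}) with Theorem~\ref{cor Ksigma} (which packages your denominator estimate and the reduction to $\kappa(\gamma_0)>2\pi$). The ``main obstacle'' you flag---obtaining the factor $2$ in front of $K^C_{\min}$---is exactly the content of Lemma~\ref{Salomao}, carried out in the Ragazzo--Salom\~ao frame rather than via a frame-comparison argument; since that frame is already global and symplectic, no $O(1)$ correction from changing trivialisations is needed.
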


\begin{remark}
If $C$ is the unit Euclidean ball in $\R^4$, then the gauge function is the Euclidean norm, we have $K^C_{\min} = 2$, and the Reeb flow on $\partial C$ is $\pi$-periodic and equal to the Hopf flow. In particular, $\tau_{\rm min}(D) = \pi$ for any disk-like global surface of section $D \subset \partial C$.
If $C'$ is $C^2$-close to $C$ then it can be proved that the Reeb flow  on $\partial C'$ admits a $\partial$-strong disk-like global surface of section $D'$ with return time uniformly close to~$\pi$. Since the curvatures of $C'$ are close to those of $C$ we get $$ K^{C'}_{\min}  \, \tau_{\min}(D') \sim K^{C}_{\min} \, \tau_{\min}(D) = 2\pi \qquad \Rightarrow \qquad K^{C'}_{\min}  \, \tau_{\min}(D') > \pi \, . $$ By Theorem~\ref{thm_main_1} the Reeb flow on $\partial C'$ is right-handed, as claimed in~\cite{ghys}.
\end{remark}

{
\begin{remark}
In the case of Reeb flows, one gets restrictions of a contact topological nature 
from Ghys' theorem \cite{ghys}. For instance, every periodic Reeb orbit defines a transverse knot that satisfies equality in Bennequin's inequality. 
Hence, we get restrictions on the transverse knot types defined by periodic Reeb orbits. 
One also deduces that all finite collections of periodic orbits bind open book decompositions that support the contact structure in the sense of Giroux, with pages that are global surfaces of section.
\end{remark}
}

\medskip

\noindent {\it Organisation of the paper.} In Section~\ref{sec_proof_thm_main} we give the definition of right-handedness and prove Theorem~\ref{thm_main_0}. We also show how Theorem~\ref{thm_main_1} follows from Theorem~\ref{thm_main_0}. Section~\ref{sec_proof_pinched} is devoted to the study of geodesic flows on $S^2$: we show how the claimed pinching condition enables us to verify the quantitative condition of Theorem~\ref{thm_main_0}. The proof uses comparison theorems from Riemannian geometry. 

\medskip

\noindent {\it Acknowledgements.} We thank Alberto Abbondandolo, Marie-Claude Arnaud, Pierre Dehornoy, Ana Rechtman and Pedro Salom\~ao for helpful discussions. We would like to thank the referee for carefully reading our paper. This project initiated when UH visited Avignon Universit\'e in 2018, invited by Marie-Claude Arnaud and Andrea Venturelli, while AF was a doctoral student there. We are especially grateful to them for this opportunity.

\section{Right-handedness from dynamical pinching}\label{sec_proof_thm_main}

\subsection{Transverse rotation numbers}\label{ssec_transv_rot_numbers}

Let $\gamma$ be a non-constant periodic orbit of a smooth flow $\phi^t$ defined on an oriented $3$-manifold~$M$. Denote by $T>0$ its primitive period, and think of $\gamma$ as a map $\gamma:\R/T\Z\to M$. Consider coordinates $(t,z=x+iy=re^{i\theta}) \in \R/T\Z \times \C$ defined on a small tubular neighbourhood $N$ of $\gamma$ such that $dt\wedge dx \wedge dy > 0$ and $\phi^t(\gamma(0)) = (t,0)$. We shall refer to such coordinates as tubular coordinates around $\gamma$. For every $\theta_0 \in \R$ consider the continuous real valued function $\theta(t)$ defined by
\begin{equation*}
D\phi^t(0,0) \cdot (0,e^{i\theta_0}) \in \R(1,0) + \R_+ (0,e^{i\theta(t)}) \qquad \theta(0) = \theta_0
\end{equation*}
If $y \in H^1(N\setminus\gamma,\R)$ is homologous to $p \ dt + q \ d\theta$ then we define
\begin{equation}
\rho^y(\gamma) = \frac{T}{2\pi} \left( p + q \lim_{t\to+\infty} \frac{\theta(t)}{t} \right).
\end{equation}
This number is called the transverse rotation number of $\gamma$ with respect to $y$. 
{The number $\rho^y(\gamma)$ might also be interpreted as a Poincar\'e translation number of the linearized flow on the unit normal bundle.} It turns out that $\rho^y(\gamma)$ does not depend on the choice of tubular coordinates or on the initial condition $\theta_0$; see~\cite[section~2]{SFS}.

\begin{remark}\label{rmk_CZ_rot}
In the notation above, suppose that $\phi^t$ is the Reeb flow of a contact form $\lambda$, and let $\sigma$ be a {local} symplectic trivialization of $(\xi,d\lambda)$ along $\gamma$. Denote by $\gamma_\sigma$ an oriented loop in $N\setminus\gamma$ obtained by pushing $\gamma$ in the direction of $\sigma$. If $N$ is small enough then $d\lambda$ defines an area form on any meridional disk $D$. Orient $D$ by $d\lambda$. There is a unique class $y_\sigma\in H^1(N\setminus\gamma,\Z)$ determined by $\left< y_\sigma,\partial D \right>=1$, $\left< y_\sigma,\gamma_\sigma \right>=0$. If  no transverse Floquet multiplier of $\gamma$ is a root of unit of order $n\geq1$ then one says that the $n$-th iterate $\gamma^n$ of $\gamma$ is non-degenerate, and defines {
\begin{equation}
\label{eq_def_CZ_frame}
\mu_{\rm CZ}^\sigma(\gamma^n) =\lfloor n2\pi \rho^{y_\sigma}(\gamma) \rfloor + \lceil n2\pi \rho^{y_\sigma}(\gamma) \rceil.
\end{equation}
Note that when $\gamma^n$ is non-degenerate then $n2\pi\rho^{y_\sigma}(\gamma) \in \Z$ precisely when $\gamma^n$ is positive hyperbolic.
When $\gamma^n$ is degenerate} then $\mu_{\rm CZ}^\sigma(\gamma^n)$ is defined to be the lowest possible value of the right-hand side above, obtained from small $C^2$-perturbations of $\lambda$ that keep $\gamma$ as a periodic Reeb orbit with~$\gamma^n$ non-degenerate. The inequality $\mu_{\rm CZ}^\sigma(\gamma) \geq 3$ is equivalent to $2\pi\rho^{y_\sigma}(\gamma) > 1$. {This follows from~\eqref{eq_def_CZ_frame} in the non-degenerate case, and is also true in general.}
\end{remark}

Suppose further that $M$ is a homology $3$-sphere. Consider any oriented Seifert surface $S$ spanned by $\gamma$. We require that $\partial S = \gamma$ including orientations, when $\gamma$ is oriented by the flow. As before, orient the meridional disk $D\subset N$ by $d\lambda$. Let $S^* \in H^1(M\setminus\gamma,\Z)$ denote the class dual to $S$. Since $M$ is a homology $3$-sphere, the class $S^*$ is independent of $S$. In fact, $\left<S^*,\beta\right> = \link(\gamma,\beta)$ for any oriented loop $\beta$ in $M\setminus\gamma$. After restricting to $N\setminus \gamma$ we can view it as a class in $H^1(N\setminus\gamma,\Z)$.

\begin{definition}\label{def_rot_Seifert}
Assume that $M$ is a homology $3$-sphere. 
Let $y$ be the cohomology class dual to some (hence any) oriented Seifert surface $S$ spanned by $\gamma$.
We call $\rho^{y}(\gamma)$ the transverse rotation number of $\gamma$ in a Seifert framing.
\end{definition}

From now on assume that $M$ is a homology three-sphere and that $\phi^t$ has no rest points. Let $\gamma_1,\dots,\gamma_n$ be a collection of periodic orbits oriented along the flow, and consider the oriented link $L = \gamma_1 \cup \cdots \cup \gamma_n$. Let $\Sigma_i$ be an oriented Seifert surface satisfying $\partial\Sigma_i = \gamma_i$, orientations included. Let $\Sigma$ be an oriented Seifert surface satisfying $\partial\Sigma = L$, orientations included. Consider small tubular neighbourhoods $N_i$ of $\gamma_i$ with tubular coordinates as above. Denote by $\Sigma^* \in H^1(M\setminus L,\R)$ and $\Sigma^*_i \in H^1(M\setminus\gamma_i,\R)$ the classes dual to $\Sigma$ and $\Sigma_i$ respectively. Each $\Sigma^*_i$ restricts to a class in $H^1(N_i\setminus\gamma_i,\R)$ still denoted by $\Sigma^*_i$ with no fear of ambiguity. Similarly $\Sigma^*$ restricts to a class in $H^1(N_i\setminus\gamma_i,\R)$ for every $i$, all of which are denoted by~$\Sigma^*$ with no fear of ambiguity.

\begin{lemma}\label{lemma_rotation_numbers}
For every $i$ we have $\rho^{\Sigma^*}(\gamma_i) = \rho^{\Sigma^*_i}(\gamma_i) + \frac{1}{2\pi} \sum_{j\neq i} \link(\gamma_i,\gamma_j)$.
\end{lemma}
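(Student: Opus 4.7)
The plan is to compute the difference $\Sigma^* - \Sigma_i^*$ as a class in $H^1(N_i\setminus\gamma_i,\R)$ in the basis $\{[dt],[d\theta]\}$, and then to appeal to the fact that $y \mapsto \rho^y(\gamma_i)$ is a linear functional on $H^1(N_i\setminus\gamma_i,\R)$: indeed, writing $y = p\,dt + q\,d\theta$, the formula $\rho^y(\gamma_i)=\frac{T}{2\pi}(p+q\lim_{t\to+\infty}\theta(t)/t)$ is linear in $(p,q)$, the limit depending only on the linearised flow along $\gamma_i$ and not on $y$. Thus it suffices to compute separately the $d\theta$- and $dt$-components of $\Sigma^*$ and $\Sigma_i^*$, and then subtract.

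For the $d\theta$-components, I would evaluate on a small positively oriented meridian $\mu_i\subset N_i\setminus\gamma_i$; with the convention $dt\wedge dx\wedge dy>0$, one has $\int_{\mu_i}dt=0$ and $\int_{\mu_i}d\theta=2\pi$, so $\langle p\,dt+q\,d\theta,\mu_i\rangle=2\pi q$. Because $\gamma_i$ is an oriented boundary component of both $\Sigma$ and $\Sigma_i$, each meridional disk around $\gamma_i$ meets $\Sigma$ and $\Sigma_i$ transversely at a single positive point, so $\langle\Sigma^*,\mu_i\rangle=\langle\Sigma_i^*,\mu_i\rangle=1$. Hence the $d\theta$-components of $\Sigma^*$ and $\Sigma_i^*$ agree, both equal to $1/(2\pi)$.

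For the $dt$-components, I would use a constant-angle longitudinal push-off $\lambda_0(t)=(t,z_0)\subset N_i\setminus\gamma_i$, for which $\int_{\lambda_0}dt=T$ and $\int_{\lambda_0}d\theta=0$, so $\langle p\,dt+q\,d\theta,\lambda_0\rangle=pT$. Using the defining property $\langle S^*,\beta\rangle=\link(\partial S,\beta)$ recalled in the paragraph preceding Definition~\ref{def_rot_Seifert}, I get $\langle\Sigma_i^*,\lambda_0\rangle=\link(\gamma_i,\lambda_0)$ and $\langle\Sigma^*,\lambda_0\rangle=\sum_k\link(\gamma_k,\lambda_0)$. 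Since $\lambda_0$ is a $C^0$-small push-off of $\gamma_i$ contained in $N_i$, it is disjoint from and isotopic to $\gamma_i$ in $M\setminus\bigcup_{j\neq i}\gamma_j$, so $\link(\gamma_j,\lambda_0)=\link(\gamma_j,\gamma_i)=\link(\gamma_i,\gamma_j)$ for each $j\neq i$, by symmetry of the linking number in a homology sphere. Subtracting then yields
\[
(p_{\Sigma^*}-p_{\Sigma_i^*})\,T\;=\;\sum_{j\neq i}\link(\gamma_i,\gamma_j).
\]

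Substituting this into the rotation-number formula and using that the $d\theta$-components cancel gives $\rho^{\Sigma^*}(\gamma_i)-\rho^{\Sigma_i^*}(\gamma_i)=\frac{T}{2\pi}(p_{\Sigma^*}-p_{\Sigma_i^*})=\frac{1}{2\pi}\sum_{j\neq i}\link(\gamma_i,\gamma_j)$, which is the claim. The argument is largely a bookkeeping exercise; the main thing to be careful with is the consistency of orientations, namely the ambient orientation of $M$, the flow orientation of $\gamma_i$, the boundary orientation induced by $\Sigma$ and $\Sigma_i$, and the sign of $\int_{\mu_i}d\theta$ forced by $dt\wedge dx\wedge dy>0$, since an inconsistency in any of these would flip the sign in the final formula.
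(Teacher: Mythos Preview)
Your proof is correct and follows essentially the same approach as the paper: both compute $\Sigma^*$ and $\Sigma_i^*$ in the dual basis $\{dt,d\theta\}$ of $H^1(N_i\setminus\gamma_i)$, observe that the $d\theta$-components agree via the meridian, and compare the $dt$-components via a longitude. The only cosmetic difference is that the paper obtains the longitude identity by pushing $\gamma_i$ off along $\Sigma_i$ and intersecting with the null-homologous $2$-cycle $S=\Sigma-\Sigma_1-\dots-\Sigma_n$, whereas you invoke the linking-number interpretation $\langle\Sigma^*,\lambda_0\rangle=\sum_k\link(\gamma_k,\lambda_0)$ directly and cancel the $k=i$ term; these are equivalent packagings of the same fact.
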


\begin{proof}
Let $(t,z=|z|e^{i\theta}) \in \R/T_i\Z \times \C$ be tubular coordinates on a small tubular neighbourhood $N_i$ of $\gamma_i$. With $\varepsilon>0$ small we consider the loops $e_i(t)=(t,\varepsilon)$, $f_i(\theta)=(0,\varepsilon e^{i\theta})$. Then $\{e_i,f_i\}$ is a basis for $H_1(N_i\setminus\gamma_i,\Z)$ whose dual basis is $\{dt/T_i,d\theta/2\pi\}$. Then on $N_i\setminus\gamma_i$ one has $$ \begin{aligned} \Sigma_i^* &\equiv \left< \Sigma_i^*,e_i \right> \frac{dt}{T_i} + \left< \Sigma_i^*,f_i \right> \frac{d\theta}{2\pi} = \left< \Sigma_i^*,e_i \right> \frac{dt}{T_i} + \frac{d\theta}{2\pi} \\ \Sigma^* &\equiv \left< \Sigma^*,e_i \right> \frac{dt}{T_i} + \left< \Sigma^*,f_i \right> \frac{d\theta}{2\pi} = \left< \Sigma^*,e_i \right> \frac{dt}{T_i} + \frac{d\theta}{2\pi} \end{aligned} $$ Denote by $\nu_i$ a section of the normal bundle of $\Sigma_i$ along $\gamma_i$. Let $\gamma_i'$ be the loop obtained by pushing $\gamma_i$ in the direction of $\nu_i$. Note that $\gamma_i'$ gets an orientation from $\gamma_i$. Then $$ \gamma_i' \equiv \left< \frac{dt}{T_i},\gamma_i' \right> e_i + \left< \frac{d\theta}{2\pi},\gamma_i' \right> f_i = e_i + \left< \frac{d\theta}{2\pi},\gamma_i' \right> f_i $$ From these formulas it follows that
\begin{equation*}
{\rm int}(\gamma_i',\Sigma) - {\rm int}(\gamma_i',\Sigma_i) = \left< \Sigma^*,\gamma'_i \right> - \left< \Sigma^*_i,\gamma'_i \right> = \left< \Sigma^*,e_i \right> - \left< \Sigma^*_i,e_i \right>
\end{equation*}
Consider the $2$-cycle $S=\Sigma - \Sigma_1 - \dots - \Sigma_n$. Since the ambient space is a homology sphere (over $\Z$), $S$ is a boundary and we get
\begin{equation}
\begin{aligned}
0 &= {\rm int}(\gamma'_i,S) = {\rm int}(\gamma_i',\Sigma) - {\rm int}(\gamma_i',\Sigma_i) - \sum_{j\neq i} \link(\gamma_i',\gamma_j) \\
&= \left< \Sigma^*,e_i \right> - \left< \Sigma^*_i,e_i \right> - \sum_{j\neq i} \link(\gamma_i,\gamma_j) \\
\end{aligned}
\end{equation}
Hence, if $\theta(t)$ denotes a lift of the polar angle of the linearised flow along $\gamma_i$ in the given tubular coordinates we get
\begin{equation}
\begin{aligned}
\rho^{\Sigma^*}(\gamma_i) &= \frac{T_i}{2\pi} \left( \frac{\left<\Sigma^*,e_i\right>}{T_i} + \frac{1}{2\pi} \lim_{t\to+\infty} \frac{\theta(t)}{t} \right) \\
&= \frac{T_i}{2\pi} \left( \frac{\left<\Sigma^*_i,e_i\right>}{T_i} + \frac{1}{T_i} \sum_{j\neq i} \link(\gamma_i,\gamma_j) + \frac{1}{2\pi} \lim_{t\to+\infty} \frac{\theta(t)}{t} \right) \\
&= \rho^{\Sigma_i}(\gamma_i) + \frac{1}{2\pi} \sum_{j\neq i} \link(\gamma_i,\gamma_j)
\end{aligned}
\end{equation}
as desired. 
\end{proof}

\begin{corollary}\label{cor_positive_transv_number}
If $n\geq2$ and $\link(\gamma_i,\gamma_j)\geq1$ for all $i\neq j$ then $\rho^{\Sigma^*}(\gamma_i) > \rho^{\Sigma_i^*}(\gamma_i)$ for every~$i$.
\end{corollary}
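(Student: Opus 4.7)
The corollary is an immediate consequence of Lemma~\ref{lemma_rotation_numbers}, so my plan is simply to invoke that identity and observe that the correction term is strictly positive under the hypotheses. I do not foresee any genuine obstacle here; the content of the statement is already encoded in the lemma.

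More precisely, my plan is the following. Fix $i \in \{1, \dots, n\}$. By Lemma~\ref{lemma_rotation_numbers} one has
\begin{equation*}
\rho^{\Sigma^*}(\gamma_i) - \rho^{\Sigma^*_i}(\gamma_i) = \frac{1}{2\pi} \sum_{j \neq i} \link(\gamma_i,\gamma_j).
\end{equation*}
Since $n \geq 2$, the index set $\{j : j \neq i\}$ is non-empty and contains at least $n - 1 \geq 1$ elements. By assumption $\link(\gamma_i,\gamma_j) \geq 1$ for all $j \neq i$, so each term in the sum is $\geq 1$, whence
\begin{equation*}
\rho^{\Sigma^*}(\gamma_i) - \rho^{\Sigma^*_i}(\gamma_i) \geq \frac{n-1}{2\pi} > 0,
\end{equation*}
which gives the desired strict inequality.

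The only thing worth double-checking, and the one place where a subtle sign issue could in principle arise, is that the conventions on orientations of the Seifert surfaces $\Sigma$ and $\Sigma_i$ and on the orientation of each $\gamma_i$ along the flow are the same conventions used in the statement of Lemma~\ref{lemma_rotation_numbers}. These were fixed in the paragraphs preceding the lemma (all Seifert surfaces are oriented so that their boundary, oriented by the flow, matches the orientation of the link), so the linking numbers $\link(\gamma_i,\gamma_j)$ appearing here are precisely the ones appearing in the lemma. No further work is required.
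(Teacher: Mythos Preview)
Your proof is correct and matches the paper's approach: the paper states this as a corollary with no separate proof, treating it as an immediate consequence of Lemma~\ref{lemma_rotation_numbers}, which is exactly what you do.
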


\begin{remark}
Let the periodic orbit $\gamma$ be oriented by the flow, and let $\Sigma,\hat\Sigma$ be oriented Seifert surfaces such that $\partial\Sigma=\gamma$, $\partial\hat\Sigma=\gamma$ (including orientations). Lemma~\ref{lemma_rotation_numbers} proves the previously claimed fact that $\rho^{\Sigma}(\gamma)=\rho^{\hat\Sigma}(\gamma)$.
\end{remark}

\subsection{Right-handedness}\label{ssec_right_handedness}

Fix a smooth 
{nowhere vanishing} vector field $X$ on an oriented homology $3$-sphere, and denote its flow by $\phi^t$. Let $\mathscr{P}$ be the set of $\phi^t$-invariant Borel probability measures. Denote by $\mathscr{R}$ the set of recurrent points, and consider the following measurable set: 
\begin{equation}
R = \{ (x,y) \in \mathscr{R} \times \mathscr{R} \mid \phi^\R(x) \cap \phi^\R(y) = \emptyset \} \, .
\end{equation}
Let $\mu_1,\mu_2 \in \mathscr{P}$ be ergodic, and denote by $\mu_1\times\mu_2$ the product measure. There are two cases:
\begin{itemize}
\item[{\bf (A)}] $\mu_1\times\mu_2(R)=1$.
\item[{\bf (B)}] $\mu_1\times\mu_2(R)=0$ and 
{$\supp(\mu_1)=\supp(\mu_2)=\gamma$ for some periodic orbit $\gamma$ (in particular $\mu_1=\mu_2$).}
\end{itemize}
Each case needs to be treated separately. Fix an auxiliary Riemannian metric $g$ { that near $p$ and $q$ realises pieces of trajectories of $X$ as geodesic arcs}.

\medskip

\noindent {\bf Case A.} Consider $(p,q)\in R$ and let $\mathcal{S}(p,q)$ denote the set of ordered pairs of sequences $(\{T_n\},\{S_n\})$ satisfying $T_n,S_n\to+\infty$, $\phi^{T_n}(p) \to p$ and $\phi^{S_n}(q) \to q$. For $n$ large enough let $\alpha_n$ and $\beta_n$ be the (unique) shortest geodesic arcs from $\phi^{T_n}(p)$ to $p$ and from $\phi^{S_n}(q)$ to $q$, respectively. Consider $C^1$-small 
 perturbations $\hat\alpha,\hat\beta$ of $\alpha_n,\beta_n$, keeping end points fixed, such that the closed loops $k(T_n,p)$ and $k(S_n,q)$ obtained by concatenating $\hat\alpha$ to $\phi^{[0,T_n]}(p)$ and $\hat\beta$ to $\phi^{[0,S_n]}(q)$, respectively, do not intersect each other. The assumption that $\hat\alpha,\hat\beta$ are $C^1$-close to $\alpha_n,\beta_n$ is important: being $C^0$-close is not enough. Define
\begin{equation}\label{def_link_minus}
\begin{aligned}
\link_-(\phi^{[0,T_n]}(p),\phi^{[0,S_n]}(q)) &= \liminf_{\tiny \hat\alpha\stackrel{C^1}{\to}\alpha_n \ \hat\beta\stackrel{C^1}{\to}\beta_n} \ \link(k(T_n,p),k(S_n,q))
\end{aligned}
\end{equation}
and
\begin{equation}\label{funny_ell}
\ell(p,q) \ = \ \inf_{(\{T_n\},\{S_n\}) \in \mathcal{S}(p,q)} \ \liminf_{n\to\infty} \ \frac{1}{T_nS_n} \link_-(\phi^{[0,T_n]}(p),\phi^{[0,S_n]}(q)).
\end{equation}
Note that~\eqref{def_link_minus} and~\eqref{funny_ell} belong to $[-\infty,+\infty]$.
One says that $\mu_1,\mu_2$ are positively linked if for $\mu_1\times\mu_2$-almost all points $(p,q)$ in $R$ the inequality $\ell(p,q) > 0$ holds.

\medskip

\noindent {\bf Case B.} One says that $\mu_1,\mu_2$ are positively linked if the transverse rotation number of the periodic orbit $\gamma$ 
{corresponding to the supports} of $\mu_1,\mu_2$ computed in a Seifert framing is strictly positive. 

\begin{definition}\label{def_right_handedness}
The flow $\phi^t$ is said to be right-handed if all pairs of ergodic measures in $\mathscr{P}$
are positively linked. 
\end{definition}

\begin{remark}
The above definition is equivalent to the one explained in~\cite{ghys}. Of course, Ghys defines right-handedness in a much more elegant way by explaining that in case {\bf A} the ergodicity assumption can be used to prove that for $\mu_1\times\mu_2$-almost all $(p,q) \in R$, all possible sequences $$ \frac{\link(k(T_n,p),k(S_n,q))}{T_nS_n} $$ as above will converge to a common limit. This limit is defined to be the value of the quadratic linking form evaluated at the pair $(\mu_1,\mu_2)$. There is also a way of assigning a number in case {\bf B}. The advantage of the definition explained here is that it avoids dealing with details on the existence of the quadratic linking form. 
\end{remark}

\subsection{Proof of Theorem~\ref{thm_main_0}}\label{ssec_proof_abstract_thm}

We fix a dynamically convex contact form $\lambda$ on $S^3$, denote by $X$ the associated Reeb vector field and by $\phi^t$ the Reeb flow. 
{The following technical statement is based on the main result from~\cite{convex}.}

\begin{proposition}
\label{prop_main}
	Let $\lambda$ be a dynamically convex contact form on $S^3$, and denote by $X$ the Reeb vector field of $\lambda$. Let $\gamma_0$ be any unknotted periodic Reeb orbit with self-linking number $-1$. Denote the primitive period by $T_0>0$. There exists a map of class $C^\infty$
	\begin{equation}\label{map_Psi_prop_main}
	\Psi:\R/\Z\times\D \to S^3
	\end{equation}
	with the following properties:
	\begin{itemize}
		\item[(a)] $\Psi(0,e^{is}) = \gamma_0(T_0s/2\pi)$ for all $s \in \R/2\pi\Z$, and $\Psi(0,\cdot):\D\to S^3$ is an embedding that defines a $\partial$-strong global surface of section for the flow of $X$.
		\item[(b)] $\Psi$ defines an orientation preserving diffeomorphism $\R/\Z \times \mathring{\D} \to S^3 \setminus \gamma_0$.
		\item[(c)] There exists a smooth vector field $W$ on $\R/\Z \times \D$ that coincides with the pull-back of $X$ by $\Psi|_{\R/\Z \times \mathring{\D}}$ on $\R/\Z \times \mathring{\D}$, and is tangent to $\R/\Z\times\partial\D$.
		\item[(d)] For every $t\in\R/\Z$ the disk $\{t\}\times\D$ is transverse to $W$ up to the boundary, and defines a global section for the flow of $W$.
		\item[(e)] There is a non-vanishing vector field $Z$ on $S^3$ satisfying $i_Z\lambda=0$, in particular $Z$ is Legendrian, with the following property. Let $Z_0$ be the unique smooth vector field on $\mathring{\D}$ defined by $$ Z(\Psi(0,z)) - d\Psi(0,z) \cdot Z_0(z) \in \R X(\Psi(0,z)). $$ If $\phi:\mathring{\D} \to \R$ is continuous and satisfies $Z_0 = |Z_0|e^{i\phi}$, then $\phi \in L^\infty(\mathring{\D})$.
	\end{itemize}
\end{proposition}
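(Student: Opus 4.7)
My plan is to build $\Psi$ by flowing a smooth parametrisation of a $\partial$-strong disk-like global surface of section forward by a position-dependent fraction of its return time, while shearing by an isotopy that connects the identity to the first return map.

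Concretely, I would first invoke Theorem~\ref{thm_JSG} and Remark~\ref{rmk_JSG_strong} to obtain a smooth $\partial$-strong disk-like global surface of section $D\subset S^3$ bounded by $\gamma_0$, together with a smooth parametrisation $\iota_0:\D\to S^3$ satisfying $\iota_0(e^{is})=\gamma_0(T_0s/2\pi)$. Because $D$ is $\partial$-strong, the first return time $\tau:\D\to(0,\infty)$ and the first return map $\psi:\D\to\D$ extend smoothly to $\partial\D$, with $\tau|_{\partial\D}\equiv T_0$ and $\psi|_{\partial\D}=\mathrm{id}$ (a point of $\gamma_0$ returns to itself after time $T_0$). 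Since $\psi$ is an orientation preserving diffeomorphism of $\D$ fixing $\partial\D$ pointwise, I would use the contractibility of $\mathrm{Diff}^+(\D,\partial\D)$ (Smale) to choose a smooth isotopy $\psi_s:\D\to\D$ with $\psi_0=\mathrm{id}$, $\psi_1=\psi$, each $\psi_s$ fixing $\partial\D$, arranged to be $s$-constant on neighbourhoods of $s=0$ and $s=1$.

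Next I would define
$$
\Psi(s,z)\;=\;\phi^{s\,\tau(\psi_s^{-1}(z))}\!\bigl(\iota_0(\psi_s^{-1}(z))\bigr).
$$
The values $\Psi(0,z)$ and $\Psi(1,z)$ both equal $\iota_0(z)$, the latter via the defining identity $\phi^{\tau(w)}\!\circ\iota_0(w)=\iota_0\circ\psi(w)$; together with $s$-constancy near the endpoints this yields agreement of all derivatives, so $\Psi$ descends smoothly to $\R/\Z\times\D$, proving (a). For (b), every $p\in S^3\setminus\gamma_0$ writes uniquely as $\phi^u(\iota_0(w))$ with $w\in\mathring\D$ and $u\in[0,\tau(w))$, and then $s=u/\tau(w)$, $z=\psi_s(w)$ inverts $\Psi$; the transversality of $X$ to $D$ makes the Jacobian non-degenerate and fixes orientation-preservation. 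For (d), the image of $\Psi(s,\cdot)$ coincides with $\{\phi^{s\tau(w)}(\iota_0(w)):w\in\D\}$, whose tangent space modulo $\R X$ equals $D\phi^{s\tau}(\xi)=\xi$; injectivity holds because a coincidence $\Psi(s,z_1)=\Psi(s,z_2)$ would force the orbit from $\iota_0(\psi_s^{-1}(z_1))$ to revisit $D$ strictly before its return time. For (c), one computes $W=\Psi^*X$ on the interior by the chain rule, invokes the $\partial$-strong hypothesis to extend smoothly across $\partial\D$, and observes that $\Psi(s,e^{i\theta})=\gamma_0\bigl(T_0(s+\theta/2\pi)\bigr)$, which forces $W|_{\R/\Z\times\partial\D}$ to be a multiple of $\partial_s$ and hence tangent to this torus. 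For (e), the self-linking condition $-1$ implies that $\xi$ is trivial as a symplectic bundle on $S^3$ and permits choosing a non-vanishing section $Z$ whose behaviour near $\gamma_0$ is compatible with such a trivialisation; the $L^\infty$ bound on the phase of $Z_0$ then translates into a controlled winding of the direction of $Z_0$ as $z\to\partial\D$, governed precisely by the self-linking number.

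Two main obstacles are anticipated. The first is the smooth extension of $W$ across $\R/\Z\times\partial\D$ in (c): on the interior the formula for $W$ is manifest, but boundary smoothness hinges on fine asymptotics of $\tau$, $\psi$ and $\iota_0$ at $\partial\D$, whereas a priori a disk-like global surface of section is only $C^1$ up to its boundary. This is exactly what the $\partial$-strong hypothesis is designed to supply, and its deployment is the technical heart of the proposition. The second is property (e): the projection $d\iota_0\to\xi$ degenerates along $\gamma_0$ (where $X$ lies in the tangent plane of $D$), so $|Z_0|$ may blow up at $\partial\D$, and the boundedness of the phase is a non-trivial winding estimate for $Z_0$, which is where the self-linking $-1$ hypothesis is genuinely used.
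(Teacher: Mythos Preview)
Your high-level strategy---flow the $\partial$-strong disk by a fraction of its return time and shear by an isotopy connecting $\mathrm{id}$ to the first return map---is the right picture, but several steps break. The boundary data you assert, $\tau|_{\partial\D}\equiv T_0$ and $\psi|_{\partial\D}=\mathrm{id}$, are false: the $\partial$-strong extension of $\tau$ and $\psi$ to $\partial\D$ is given by the first return of the \emph{linearised polar dynamics} on $\T_{\gamma_0}$ to the circle $\nu^D\gamma_0/\R X$, and since the transverse rotation number of $\gamma_0$ in a Seifert framing exceeds~$1$ (dynamical convexity plus $\mathrm{sl}(\gamma_0)=-1$; see Remark~\ref{rmk_CZ_rot}), that return occurs strictly before time $T_0$ and does not fix~$\partial\D$ pointwise. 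This also invalidates your invocation of Smale's theorem for $\mathrm{Diff}^+(\D,\partial\D)$. Independently, your $\Psi$ is not smooth across $s\equiv 0$: with $\psi_s$ constant near the endpoints one computes $\partial_s\Psi(0,z)=\tau(z)\,X(\iota_0(z))$ but $\partial_s\Psi(1,z)=\tau(\psi^{-1}(z))\,X(\iota_0(z))$, and these agree only if $\tau\circ\psi=\tau$. The paper cures this (Lemmas~\ref{lemma_flipping_pages_time} and~\ref{lemma_smooth_foliation}) by replacing the linear time $s\,\tau(w)$ with a splined $t(s,w)$ satisfying $D_1t\equiv\tau_{\max}$ near $s=0,1$.

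The deeper gap is (c). Your map collapses the boundary $2$-torus $\R/\Z\times\partial\D$ onto $\gamma_0$, so $\Psi^*X$ is undefined there, and the $\partial$-strong hypothesis by itself does not force the interior pull-back to extend smoothly. The paper's mechanism is Fried's blow-up (Lemma~\ref{lemma_Fried_blow_up}): one replaces $\gamma_0$ by the torus $\T_{\gamma_0}$ to obtain a manifold with boundary~$M_{\gamma_0}$, and an explicit polar-coordinate computation in a Martinet tube shows that $X|_{S^3\setminus\gamma_0}$ extends to a smooth vector field on $M_{\gamma_0}$ whose restriction to $\partial M_{\gamma_0}$ is exactly the linearised polar dynamics. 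The $\partial$-strong hypothesis then enters only to guarantee that the lift of $D$ to $M_{\gamma_0}$ is a global section for this extended flow, so that the flowed disks foliate $M_{\gamma_0}\simeq\R/\Z\times\D$; the map $\Psi$ is the composition of this identification with the blow-down $M_{\gamma_0}\to S^3$. Your sketch of (e) is likewise not an argument: the paper (Step~3 of Appendix~\ref{app_coordinates}) takes a local section $Y_0$ of $\xi$ spanning $T\mathcal{D}_\delta\cap\xi$ near~$\gamma_0$, rotates it by one full negative turn along $\gamma_0$ to get $Y_1$, uses $\mathrm{sl}(\gamma_0)=-1$ to extend $Y_1$ to a global non-vanishing section of~$\xi$, and then bounds the argument of its projection to the disk by comparing lifts of the arguments of $P(Y_0)$ and $P(Y_1)$ over each fundamental domain of the collar annulus.
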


{
\begin{proof}
Following \cite{convex,char2,openbook}, there exists an embedded closed disk $\mathcal{D}\hookrightarrow S^3$ of class $C^1$ such that $\partial\mathcal{D}=\gamma_0$. It is obtained by projecting to $M$ a special  finite energy plane in the symplectisation of $(M,\xi)$ asymptotic to~$\gamma_0$. 
By a careful analysis of the aforementioned embedding, it is also possible to show that there exists a $C^\infty$ family of $C^\infty$ disks $\mathcal{D}_\delta$, parametrised by $0<\delta\ll 1$, such that:
\begin{enumerate}
\item Each $\mathcal{D}_\delta$ is a $\partial$-strong global surface of section satisfying $\partial\mathcal{D}_\delta=\gamma_0$.
\item $\mathcal{D}_\delta\to \mathcal{D}$ in $C^1$ as $\delta\to 0$.
\end{enumerate}
For detailed proofs of the claims above we refer to~\cite[appendix~C]{FHArxiv_v1}.

We want now to project vector fields on $\mathcal{D}_\delta\setminus\gamma_0$.	
Consider a \textit{Martinet tube} $F:\R/\Z\times\text{int}(\mathbb{D})\subset \R/\Z\times\C\to U$ around $\gamma_0$. This means that $U$ is an open neighborhood of $\gamma_0$, and $F$ is a diffeomorphism satisfying
\begin{itemize}
\item[(MT1)] $F(t,0) = \gamma_0(T_0t)$.
\item[(MT2)] There exists a smooth function $f:\R/\Z \times {\rm int}(\D)\to(0,+\infty)$ such that $f(\vartheta,0) \equiv T_0$, $df(\vartheta,0)\equiv0$, and $F^*\lambda = f(\vartheta,x+iy) (d\vartheta + xdy)$.
\end{itemize}
The vector fields $\partial_x,\partial_y$ along $\gamma_0$ define a $d\lambda$-positive frame of $\xi$ along $\gamma_0$.
Fix $\delta$ small. Near $\gamma_0$ we can find a non-vanishing section $Y_0$ of $\xi$ that satisfies $\R Y_0 = T\mathcal{D}_\delta \cap \xi$ on points of $\mathcal{D}_\delta$ near $\gamma_0$. Using the coordinates $(\vartheta,x+iy)$ in the Martinet tube near~$\gamma_0$, we can write
	\begin{equation*}
	Y_0 = a_0 \partial_x + b_0 (\partial_y-x\partial_\vartheta)
	\end{equation*}
	where the $\C$-valued function $a_0+ib_0$ does not vanish. Consider the vector field
	\begin{equation}
	\label{vf_Y_1}
	Y_1 = a_1 \partial_x + b_1 (\partial_y-x\partial_\vartheta) \qquad \begin{pmatrix} a_1 \\ b_1 \end{pmatrix} = \begin{pmatrix} \cos(2\pi\vartheta) & \sin(2\pi\vartheta) \\ -\sin(2\pi\vartheta) & \cos(2\pi\vartheta) \end{pmatrix} \begin{pmatrix} a_0 \\ b_0 \end{pmatrix} \, .
	\end{equation}
	Since $\gamma_0$ has self-linking number $-1$, and $Y_1$ winds $-1$ with respect to $Y_0$ along $\gamma_0$, there exists a smooth extension of $Y_1$ as a non-vanishing section of $\xi \to S^3$. 
We continue to denote this extension by~$Y_1$, with no fear of ambiguity.

	Recall that $\mathcal{D}_\delta$ is a global surface of section spanned by $\gamma_0$ for small enough $\delta$, and $\mathcal{D}_\delta \to \mathcal{D}$ in $C^1$ as $\delta \to 0$. From now on we will denote by $\dot{\mathcal{D}}_\delta = \mathcal{D}_\delta \setminus \partial \mathcal{D}_\delta = \mathcal{D}_\delta \setminus \gamma_0$ the interior of $\mathcal{D}_\delta$. The projection along the Reeb direction is a smooth vector bundle map $$ P : TS^3|_{\dot{\mathcal{D}}_\delta} \to T\dot{\mathcal{D}}_\delta $$ characterised by $$ P^2=P, \qquad\qquad \ker P = \R X. $$ Note that $P$ becomes singular on the boundary since $X$ is tangent to $\partial \mathcal{D}_\delta = \gamma_0$. Note also that $P$ defines a vector bundle isomorphism between $\xi|_{\dot{\mathcal{D}}_\delta}$ and $T\dot{\mathcal{D}}_\delta$, this follows from the transversality between the Reeb vector field and $\dot{\mathcal{D}}_\delta$.
	
	Let $A_\delta \subset \mathcal{D}_\delta$ be a small compact neighborhood of $\gamma_0 = \partial \mathcal{D}_\delta$ in $\mathcal{D}_\delta$, small enough so that it is contained in the domain of definition of $Y_0$. Denote $\dot A_\delta = A_\delta \setminus \gamma_0$. We can equip $A_\delta$ with polar coordinates $(\rho,\vartheta)$, coeherent with the Martinet tube in the sense that $\vartheta$ is the same $\R/\Z$-coordinate in the domain of $F$, so that $A_\delta$ corresponds to $\{(\rho,\vartheta) \in [1-\varepsilon,1]\times\R/\Z\}$ for $\varepsilon>0$ small enough, and, moreover, $\dot A_\delta$ corresponds to $\{(\rho,\vartheta) \in [1-\varepsilon,1)\times\R/\Z\}$. The local slice disk $\{\vartheta \equiv 0 \mod \Z\}$ intersects $A_\delta$ in a smooth arc $\eta$ transverse to $\partial A_\delta$ defining a generator of $H_1(A_\delta,\partial A_\delta)$. 
	
	By~\eqref{vf_Y_1}, $Y_0$ and $Y_1$ are positively collinear only at $\{\vartheta \equiv 0 \mod \Z\}$. Hence on $\dot{A}_\delta$ the vector fields $P(Y_1)$ and $Y_0 = P(Y_0)$ are positively collinear only at $\eta \setminus \gamma_0$. The universal covering of $A_\delta$ can be given coordinates $(\rho,\tilde\vartheta) \in [1-\varepsilon,1] \times \R$, where $\vartheta = \tilde\vartheta \mod \Z$. Hence the universal covering of $\dot A_\delta$ is $[1-\varepsilon,1) \times \R$. The disk $\mathcal{D}_\delta$ can be equipped with global coordinates $u+iv \in \D$ such that $u+iv = \rho e^{i2\pi\vartheta}$ near the boundary. We can write
	\begin{equation*}
	\begin{cases}
	Y_0 = R_0 (\cos\varphi_0 \ \partial_u + \sin\varphi_0 \ \partial_v) & \text{on $A_\delta$} \\
	P(Y_1) = R_1 (\cos\varphi_1 \ \partial_u + \sin\varphi_1 \ \partial_v) & \text{on $\dot{\mathcal{D}}_\delta$}
	\end{cases}
	\end{equation*}
	in polar coordinates, where
	\begin{equation*}
	\varphi_0 : [1-\varepsilon,1] \times \R/\Z \to \R/2\pi\Z \qquad \varphi_1 : \dot{\mathcal{D}}_\delta \to \R/2\pi\Z
	\end{equation*}
	are smooth. Choose smooth lifts 
	\begin{equation*}
	\tilde \varphi_0 : [1-\varepsilon,1] \times \R \to \R \qquad \text{and} \qquad \tilde \varphi_1 : \dot{\mathcal{D}}_\delta \to \R
	\end{equation*}
	of $\varphi_0$ and $\varphi_1$, respectively. We will also write $\tilde \varphi_1(\rho,\tilde\vartheta)$ to denote the corresponding lift of the restriction of $\varphi_1$ to~$\dot A_\delta$, with no fear of ambiguity. Note that
	\begin{equation}
	\label{periodicity_varphis}
	\begin{cases}
	\tilde\varphi_0(\rho,\tilde\vartheta+1) = \tilde\varphi_0(\rho,\tilde\vartheta) + 2\pi \, ,\\
	\tilde\varphi_1(\rho,\tilde\vartheta+1) = \tilde\varphi_1(\rho,\tilde\vartheta) \, .
	\end{cases}
	\end{equation}
	Note also that $\tilde\varphi_0$ is bounded on any compact subset of $[1-\varepsilon,1] \times \R$ since $Y_0$ is smooth on $A_\delta$. As observed before,
	\begin{equation}
	\label{collinear_varphis}
	\tilde\varphi_1(\rho,\tilde\vartheta) - \tilde\varphi_0(\rho,\tilde\vartheta) \in 2\pi\Z \qquad \Leftrightarrow \qquad \tilde\vartheta \in \Z.
	\end{equation}
	The lifts of $\eta$ divide the universal covering of $\dot A_\delta$ in fundamental domains. It follows from~\eqref{collinear_varphis} that on each such fundamental domain $[1-\varepsilon,1) \times [k,k+1]$ the function $\tilde\varphi_1 - \tilde\varphi_0$ takes values on $[2\pi(m-1),2\pi m]$ for some $m \in \Z$, in particular it is bounded there. Hence the function $\tilde\varphi_1  = \tilde\varphi_0 + \tilde\varphi_1 - \tilde\varphi_0$ is bounded on each fundamental domain $[1-\varepsilon,1) \times [k,k+1]$. In view of the second equation in~\eqref{periodicity_varphis} we can conclude that $\tilde\varphi_1$ is bounded on $\dot A_\delta$. By compactness of the closure of $\dot{\mathcal{D}}_\delta \setminus \dot A_\delta$, we get that $\tilde\varphi_1 \in L^\infty(\dot{\mathcal{D}}_\delta)$. 
	
	This shows that the desired vector field $Z$ satisfying property (e) in Proposition~\ref{prop_main} can be taken as $Z=Y_1$.

	We can now conclude our proof. Using the Martinet tube $F$ chosen above,
	we can consider the space
	\begin{equation}
	M = \left. \left( S^3\setminus \gamma_0 \ \sqcup \ \R/\Z \times [0,1) \times \R/2\pi\Z \right) \right/ \sim
	\end{equation}
	where a point $F(\vartheta,re^{i\theta}) \in U \setminus \gamma_0$ is identified with $(\vartheta,r,\theta) \in \R/\Z \times (0,1) \times \R/2\pi\Z$. One gets a differentiable structure on $M$ which makes $M$ diffeomorphic to $\R/\Z\times \D$ in such a way that $re^{i2\pi\vartheta}$ are polar coordinates near the boundary of the $\D$-factor. 
The smooth embedded disk $\mathcal{D}_\delta \subset S^3$ 
is such that $\dot{\mathcal{D}}_\delta = \mathcal{D}_\delta \setminus \partial \mathcal{D}_\delta$ is the interior of a smoothly embedded meridional disk $D\subset M$ intersecting $\partial M$ cleanly. 
	Moreover, it is possible to prove that $X|_{S^3\setminus\gamma_0}$ extends to a smooth vector field $W$ on $M$ and such that, 
	in particular, $W$ is tangent to $\partial M$. 
	Since $\mathcal{D}_\delta$ is $\partial$-strong, the embedded disk 
	$D$ is transverse to $W$ up to the boundary and 
	$D\cap \partial M = \partial D$ is a global surface of section for the flow of $W$ on $\partial M$. Since $D \setminus \partial M = \dot{\mathcal{D}}_\delta$ is also a global surface of section for the flow of $W$ on $M\setminus \partial M$, one gets that $D$ is a global section for the flow of $W$ on $M$. Using the flow of $W$ to deform $D$ one constructs a smooth foliation $\{D_t\}_{t\in\R/\Z}$ of $M$ by embedded disks in such that $D=D_0$ and all $D_t$ are transverse to $W$. It follows that all $D_t$ are global surfaces of section for the flow of $W$ on $M$ since $D=D_0$ is. Hence we get a diffeomorphism $$ M \simeq \R/\Z \times \D \qquad\qquad D_t \simeq \{t\}\times \D $$ Properties (a)-(d) follow from this construction.

	We fix coordinates $u+iv$ on $D_0 \simeq \{0\} \times \D$. 
	The aforementioned discussion has shown the existence of a non-vanishing vector field $Z$ on $S^3$ tangent to the contact structure $\xi = \ker\lambda$ that projects to $\dot{\mathcal{D}}_\delta \simeq D_0 \setminus \partial D_0 \simeq \{0\} \times \mathring{\D}$ along the direction of $X \simeq W$ to a non-vanishing vector field on $\mathring{\D}$ for which any continuous choice of argument (in the coordinates $u+iv$) defines a function in $L^\infty(\mathring{\D})$. This is property~(e).
\end{proof}

}


Denote by $\varphi^t$ the flow of $W$, so that on $\R/\Z\times\mathring{\D}$ we have $$ \varphi^t = \Psi^{-1} \circ \phi^t \circ \Psi. $$ The first return time back to $\{0\}\times\D$ is
\begin{equation}
\tau:\D\to(0,+\infty) \qquad \tau(z)=\inf\{ t>0 \mid \varphi^t(0,z)\in \{0\} \times \D \}.
\end{equation}
It follows from (c) and (d) that $\tau$ is bounded away from zero and bounded from above. Denote 
\begin{equation*}
0 < \tau_{\min} = \min \tau \qquad \qquad \tau_{\max} = \max \tau < +\infty
\end{equation*}
The return map is denoted by
\begin{equation}\label{return_map_notation}
h:	\D \to \D \qquad\qquad (0,h(z)) = \varphi^{\tau(z)}(0,z)
\end{equation}

\begin{lemma}\label{lemma_flipping_pages_time}
	For every $c\in(0,1)$ there exists $t:[0,1]\times \D \to [0,+\infty)$ smooth such that: 
	\begin{itemize}
		\item[(i)] $t(0,z)=0$ and $t(1,z)=\tau(z)$ for all $z\in\D$.
		\item[(ii)] $D_1t(s,z) \geq c\tau_{\min}$ for all $(s,z) \in [0,1] \times \D$.
		\item[(iii)] $\exists 0<\varepsilon\ll1$ such that $D_1t(s,z) = \tau_\max$ if $s\in[0,\varepsilon)\cup(1-\varepsilon,1]$, for all $z\in\D$.
	\end{itemize} 
\end{lemma}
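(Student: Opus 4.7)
The strategy is to build $t$ as a fibrewise antiderivative in $s$ of a suitable smooth velocity function $v\colon[0,1]\times\D\to\R$; then $t(s,z):=\int_0^s v(r,z)\,dr$ is smooth, satisfies $t(0,\cdot)\equiv 0$, and the three required properties translate into conditions on $v$: (i) becomes $\int_0^1 v(r,z)\,dr=\tau(z)$ for every $z$; (ii) becomes $v\geq c\tau_{\min}$ pointwise; and (iii) becomes $v(s,z)=\tau_{\max}$ for $s$ in a neighbourhood of $\{0,1\}$.

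To produce such a $v$, I pick a smooth cutoff $\chi\colon[0,1]\to[0,1]$ equal to $1$ on $[0,\epsilon]\cup[1-\epsilon,1]$ (and $\leq 1$ elsewhere), set $I_\chi:=\int_0^1\chi(r)\,dr$, and define
$$ v(s,z)\;:=\;\chi(s)\,\tau_{\max}\;+\;\bigl(1-\chi(s)\bigr)\,\mu(z),\qquad \mu(z)\;:=\;\frac{\tau(z)-I_\chi\,\tau_{\max}}{1-I_\chi}. $$
By construction $\int_0^1 v(r,z)\,dr=\tau(z)$ and $v\equiv\tau_{\max}$ wherever $\chi\equiv 1$, so conditions (i) and (iii) are automatic. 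Since $v$ is a convex combination of $\tau_{\max}$ and $\mu(z)$, and $\tau_{\max}\geq\tau_{\min}>c\tau_{\min}$ (as $c<1$), the uniform lower bound $v\geq c\tau_{\min}$ needed for (ii) is equivalent to $\mu(z)\geq c\tau_{\min}$ for every $z\in\D$.

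The one quantitative point is to guarantee $\mu(z)\geq c\tau_{\min}$. Using $\tau(z)\geq\tau_{\min}$, this inequality reduces to
$$ I_\chi\;\leq\;\frac{\tau_{\min}(1-c)}{\tau_{\max}-c\tau_{\min}}, $$
whose right-hand side is strictly positive because $c<1$ and $0<\tau_{\min}\leq\tau_{\max}<+\infty$ (the boundedness of $\tau$ is the strong GSS property supplied by Proposition~\ref{prop_main}(c)--(d)). Thus it suffices to shrink the transition intervals of $\chi$ so that $I_\chi$ is small enough. No serious obstacle arises; the lemma is essentially a one-dimensional partition-of-unity construction in the $s$-variable, tuned so that the prescribed endpoint slope $\tau_{\max}$, the total increment $\tau(z)$, and the uniform lower bound $c\tau_{\min}$ are simultaneously compatible, and this compatibility is precisely ensured by the hypothesis $c\in(0,1)$ together with the bounds on $\tau$.
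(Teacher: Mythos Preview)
Your proof is correct and is in fact cleaner than the paper's argument. The paper proceeds by first building a piecewise-linear inverse function: it sets $\delta=c\tau_{\min}$, defines a piecewise-linear map $s(t,z)$ (slope $1/\tau_{\max}$ on $[0,\tau(z)-\delta]$, then a steeper slope on $[\tau(z)-\delta,\tau(z)]$ so that $s(\tau(z),z)=1$), inverts it to obtain $\hat t(s,z)$, checks the one-sided derivative bounds, and then invokes a ``parametrised splining method'' to smooth $\hat t$ into the desired $t$ while preserving the derivative bound and the exact slope $\tau_{\max}$ near the endpoints. Your approach bypasses the inversion and splining entirely by writing $t$ as the antiderivative of an explicitly smooth velocity $v$ built as a convex combination of $\tau_{\max}$ and the correcting constant $\mu(z)$; the single quantitative step is the inequality $I_\chi\leq \tau_{\min}(1-c)/(\tau_{\max}-c\tau_{\min})$, which is exactly the compatibility between the prescribed endpoint slope, the total increment, and the lower bound. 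What your route buys is transparency and an immediately smooth output; what the paper's route makes slightly more visible is the geometric picture of ``flipping pages'' at speed $\tau_{\max}$ until just before the return and then catching up, but functionally the two constructions achieve the same thing.
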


\begin{proof}
Fix $c\in(0,1)$ and let $\delta:=c\tau_\min$. Consider the following set
\[
\Omega := \{ (t,z) \mid z\in {\D}, \ 0\leq t\leq \tau(z) \} \subset \R \times \D,
\]
and define the 
piecewise smooth function $s:\Omega\rightarrow[0,1]$ by 
$$
s(t,z):=
\begin{cases}
\dfrac{ t}{\tau_{\max}} & 0\leq  t\leq \tau(z)-\delta \\ \\
\dfrac{\tau(z)-\delta}{\tau_\max} + \dfrac{\tau_{\max}-\tau(z)+\delta}{\delta\tau_\max} \left(t-\tau(z)+\delta\right) & \text{for }\tau(z)-\delta\leq  t\leq\tau(z).
\end{cases}
$$
Let us now define $\hat t : [0,1]\times{\mathbb{D}}\to [0,+\infty)$ by the identity $ \hat t(s(t,z),z) = t$. Observe that $\hat t(0,z)=0$ and $\hat t(1,z)=\tau(z)$. For every $(s,z) \in [0,1] \times \mathring{\mathbb{D}}$ it holds, {denoting by $D^{\pm}_1$ one-sided derivatives in the first variable,$$ \min\left\{ D^+_1\hat t(s,z),D^-_1\hat t(s,z) \right\} \geq \min\left\{ \tau_{\max}, \dfrac{\delta\tau_{\max}}{\tau_{\max}-\tau(z)+\delta}  \right\} >\delta=c\tau_\min\,. $$}
Using a parametrised splining method we get the a smooth $t:[0,1] \times {\mathbb{D}} \to [0,+\infty)$ satisfying {$D_1t(s,z) \in [D^+_1\hat t(s,z),D^-_1\hat t(s,z)]\geq c\tau_\min$}, $t(0,z)=0$ and $t(1,z)=\tau(z)$. 
Extend $t$ to $[0,+\infty)\times {\D}$ by
\begin{equation}\label{extension of that}
t(s,z) = \sum_{j=0}^{\lfloor s\rfloor -1} \tau \circ h^j(z) \ + \ t \left( s-\lfloor s\rfloor,h^{\lfloor s\rfloor}(z) \right) \qquad\qquad (s>1)
\end{equation}
It follows that $t$ is continuous on $[0,+\infty) \times {\D}$ and smooth on $[n,n+1]\times {\D}$ for each $n\geq 0$, that 
the discontinuities of $D_1t$ could eventually appear only at $\N\times {\D}$, and that $D_1t\geq c\tau_\min$ on $[n,n+1]\times\mathring{\D}$ for every $n\geq 0$.
Again by a parametrized splining method, we can ask also that the extended function $t$ is smooth at $\N\times {\mathbb{D}}$ and that there exists $0<\varepsilon\ll 1$ such that $D_1t(s,z)=\tau_\max$ if $s\in[0,\varepsilon)\cup(1-\varepsilon,1]$.
\end{proof}


Consider disks $D_s$ defined by
\begin{equation}
D_s = \{ \varphi^{t(s,z)}(0,z) \mid z\in\D \}
\end{equation}
where $s$ varies on $[0,1]$. Note that $D_1=D_0=\{0\}\times\D$ in view of~\eqref{return_map_notation}. Hence we get a continuous $\R/\Z$-family $\{D_s\}_{s\in\R/\Z}$ of smooth disks.
{Actually, the family $\{D_s\}_{s\in\R/\Z}$ defines a smooth foliation of $\R/\Z\times\D$.}

%
%

\begin{lemma}\label{lemma_property_f}
In addition to properties (a)-(e) from Proposition~\ref{prop_main} one can assume that the map $\Psi$ in~\eqref{map_Psi_prop_main} satisfies the following property.
\begin{itemize}
\item[(f)] If $\varphi^t$ denotes the flow of $W$ then for every $c\in(0,1)$ there exists a smooth function $t:[0,1]\times \D \to [0,+\infty)$ satisfying properties (i)-(iii) in Lemma~\ref{lemma_flipping_pages_time}, and a smooth isotopy $\{h_s:\D\to\D\}_{s\in[0,1]}$ such that
\begin{equation}\label{isotopy_h_tube}
\varphi^{t(s,z)}(0,z)=(s,h_s(z))
\end{equation}
for all $(s,z) \in [0,1]\times\D$.
\end{itemize}
\end{lemma}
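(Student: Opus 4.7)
The plan is to replace $\Psi$ by a new map $\Psi':=\Psi\circ F$, where $F:\R/\Z\times\D\to\R/\Z\times\D$ is a diffeomorphism chosen so that, in the new coordinates, the ``vertical'' disks $\{s\}\times\D$ correspond to the leaves $D_s=\{\varphi^{t(s,z)}(0,z):z\in\D\}$ of the foliation provided by Lemma~\ref{lemma_smooth_foliation}. Once this is arranged, the identity $\varphi^{t(s,z)}(0,z)=(s,h_s(z))$ in (f) will be tautological in the new coordinates.

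Fix $c\in(0,1)$ and let $t:[0,1]\times\D\to[0,+\infty)$ be the function provided by Lemma~\ref{lemma_flipping_pages_time}. Since the first return map $h:\D\to\D$ is an orientation-preserving diffeomorphism that restricts to a diffeomorphism of $\partial\D$, and the group of such diffeomorphisms of $\D$ is path-connected, there is a smooth isotopy $\{h_s\}_{s\in[0,1]}$ with $h_0=\mathrm{id}$, $h_1=h$, each $h_s$ preserving $\partial\D$. After reparametrizing with a smooth bump function, I arrange that $h_s=\mathrm{id}$ for $s\in[0,\epsilon)$ and $h_s=h$ for $s\in(1-\epsilon,1]$, where $\epsilon>0$ is as in (iii) of Lemma~\ref{lemma_flipping_pages_time}. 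Define
\[
F(s,z):=\varphi^{t(s,h_s^{-1}(z))}(0,h_s^{-1}(z)), \qquad (s,z)\in[0,1]\times\D.
\]
One has $F(0,z)=F(1,z)=(0,z)$, so $F$ descends to $\R/\Z\times\D$; smoothness at the seam $s=0\in\R/\Z$ follows from (iii) and the arrangement on $h_s$, which give the single expression $F(s,z)=\varphi^{\tau_{\max}s}(0,z)$ in a two-sided neighborhood of $0$ (where $s-1$ replaces $s$ on the ``left'' side). Because $F$ sends $\{s\}\times\D$ diffeomorphically onto $D_s$ and $\{D_s\}$ is a smooth foliation, $F$ is a smooth diffeomorphism of $\R/\Z\times\D$; its differential at $s=0$ has positive determinant $\tau_{\max}W_1(0,z)>0$, and $F$ preserves $\R/\Z\times\partial\D$ because $h_s$ preserves $\partial\D$ and $\varphi$ preserves the boundary torus.

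Set $\Psi':=\Psi\circ F$. Properties (a) and (e) are immediate from $F(0,\cdot)=\mathrm{id}$, and (b) and (c) from the previous paragraph. For (d), the transversality of each leaf $D_s$ to $W$ on $\R/\Z\times\mathring\D$ follows from a short linear-algebra calculation: a hypothetical relation $W(p)\in T_pD_s$ at $p=\varphi^{t(s,z)}(0,z)$ would, after pulling back by $d\varphi^{-t(s,z)}$, give an identity $(0,v)=(1-D_2t(s,z)[v])\,W(0,z)$ for some $v\in T_z\D$, forcing $W_1(0,z)=0$ and contradicting the transversality of $\{0\}\times\mathring\D$ to $W$. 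Finally, (f) is immediate: by construction $\varphi^{t(s,z)}(0,z)=F(s,h_s(z))$, so the new flow $\varphi'^u=F^{-1}\circ\varphi^u\circ F$ satisfies $\varphi'^{t(s,z)}(0,z)=F^{-1}(F(s,h_s(z)))=(s,h_s(z))$.

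The main technical obstacle is the smooth descent of $F$ across the seam $s=0\in\R/\Z$; this is precisely what forces the careful setup of Lemma~\ref{lemma_flipping_pages_time} (in particular the boundary behavior (iii)) and the choice of the isotopy $h_s$ to be constant near the endpoints of $[0,1]$.
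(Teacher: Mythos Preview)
Your argument is correct and follows essentially the same strategy as the paper: compose $\Psi$ with a diffeomorphism of $\R/\Z\times\D$ that straightens the smooth foliation $\{D_s\}$ from Lemma~\ref{lemma_smooth_foliation} into the product foliation $\{\{s\}\times\D\}$. The difference is only in the order of construction. The paper first asserts (as an abstract consequence of Lemmas~\ref{lemma_flipping_pages_time} and~\ref{lemma_smooth_foliation}) the existence of an orientation-preserving diffeomorphism $\Phi$ with $\Phi(\{s\}\times\D)=D_s$, and then \emph{defines} $h_s$ by the relation $\tilde\varphi^{t(s,z)}(0,z)=(s,h_s(z))$ in the new coordinates; the isotopy comes for free from the flow. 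You instead \emph{choose} an isotopy $h_s$ from $\mathrm{id}$ to $h$ (invoking path-connectedness of $\mathrm{Diff}(\D)$), flatten it near the endpoints to match condition~(iii), and then explicitly build the straightening diffeomorphism $F$ from $h_s$ and $t$. Your construction is more hands-on and makes the seam smoothness at $s=0$ completely explicit; the paper's is shorter but leaves the existence of $\Phi$ as a standard fact about smooth disk foliations of a solid torus. Both routes yield the same conclusion, and your verification of (a)--(e) for $\Psi'=\Psi\circ F$ and of the transversality in (d) via the pull-back identity $(1-D_2t(s,z)[v])\,W(0,z)=(0,v)$ is fine.
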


\begin{proof}
As a consequence of Lemma~\ref{lemma_flipping_pages_time} and {since $\{D_s\}_{s\in\R/\Z}$ is a smooth foliation of $\R/\Z\times\mathbb{D}$, }
one finds an orientation preserving diffeomorphism $\Phi : \R/\Z\times\D \to \R/\Z\times\D$ satisfying $\Phi(\{s\}\times\D) = D_s$ for all $s\in\R/\Z$. If $\widetilde W = \Phi^*W$ and $\tilde\varphi^t$ denotes the flow of $\widetilde W$ then we find a smooth isotopy $\{h_s\}_{s\in[0,1]}$ of self-diffeomorphisms of $\D$ satisfying $h_0=id$, $h_1=h$, uniquely determined by $\tilde\varphi^{t(s,z)}(0,z) = (s,h_s(z))$. Here $h$ is the return map~\eqref{return_map_notation}. The proof is concluded if we revert the notation from $\tilde\varphi^t$, $\widetilde W$ and $\Phi \circ \Psi$ back to $\varphi^t$, $W$ and $\Psi$.
\end{proof}

\begin{notation}
If $z:[a,b] \to \C\setminus\{0\}$ is continuous then we denote
\begin{equation}\label{def winding}
\wind_{[a,b]}(z) = \frac{\theta(b)-\theta(a)}{2\pi}
\end{equation}
where $\theta:[a,b]\to\R$ is continuous and satisfies $z(s)=|z(s)|e^{i\theta(s)}$.
\end{notation}

Up to rotating the coordinate system on $S^3 \setminus \gamma_0$ obtained via the map~$\Psi$, i.e., up to changing $\Psi(t,z)$ by $\Psi(t,e^{2\pi kt}z)$ for some $k\in\Z$, we may assume that the following property holds: for any pair of loops $\alpha,\beta:\R/\Z \to \mathring{\D}$ satisfying $\alpha(t) \neq \beta(t) \ \forall t$, the linking number in $S^3$ between  the loops $t\mapsto \Psi(t,\alpha(t))$ and $t\mapsto \Psi(t,\beta(t))$ is equal to $\wind_{[0,1]}(\beta(t)-\alpha(t))$. From now on we assume that $\Psi$ satisfies this property.

Let $I=\{f_t\}_{t\in[0,1]} \subset \text{Diff}^1(\mathring\D)$ be a $C^1$-isotopy joining $f_0=\mathrm{id}_{\mathring\D}$ to $f_1=f$. Extend the isotopy on $\R_+=[0,+\infty)$ by setting $f_{1+t}=f_t\circ f$.

\begin{theorem}[\cite{Flo}]\label{Florio}
	Let $x,y \in \mathring{\D}$, $x\neq y$, and $T\geq0$ be fixed arbitrarily. Then 
	$$
	\wind_{[0,T]}(f_{t}(y)-f_{t}(x)) = \wind_{[0,T]}\left( Df_{t}(z)(y-x) \right)
	$$
	holds for some $z$ in the segment $[x,y]$ joining $x$ to $y$.
\end{theorem}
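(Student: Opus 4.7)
My plan is to exploit the fundamental identity obtained by integrating $Df_T$ along the segment parametrized by $\gamma_s = x + s(y-x)$:
\begin{equation*}
V(t) := f_t(y) - f_t(x) = \int_0^1 Df_t(\gamma_s)(y-x) \, ds = \int_0^1 U(t,s) \, ds,
\end{equation*}
where $U(t,s) := Df_t(\gamma_s)(y-x)$. Both $V(t)$ and every $U(t,s)$ are non-zero: the former because $f_t$ is injective and $x \neq y$, the latter because $Df_t(\gamma_s)$ is invertible. Lift arguments continuously by setting $\eta(t)$ to be a continuous lift of $\arg V(t)$ with $\eta(0) = \arg(y-x)$, and $\theta(t,s)$ a jointly continuous lift of $\arg U(t,s)$ with $\theta(0,s) = \arg(y-x)$. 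Using these lifts, $\wind_{[0,T]}(V) = (\eta(T)-\arg(y-x))/2\pi$ and $\wind_{[0,T]}(Df_\cdot(\gamma_s)(y-x)) = (\theta(T,s)-\arg(y-x))/2\pi$, so the conclusion reduces to finding $s \in [0,1]$ with $\eta(T) = \theta(T,s)$.

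Since $\theta(T,\cdot)$ is continuous on $[0,1]$, its image is the closed interval $[\theta_{\min}(T),\theta_{\max}(T)]$. The intermediate value theorem then reduces the problem to establishing the inclusion $\eta(T) \in [\theta_{\min}(T),\theta_{\max}(T)]$. The candidate $z \in [x,y]$ is then $\gamma_s$ for any $s$ realizing this equality. I would prove the inclusion by a continuity argument in $t$, propagating from $t=0$ (where all three quantities equal $\arg(y-x)$) up to $t=T$. The essential constraint comes from normalising the integral representation: setting $r(t,s) = |U(t,s)|/|V(t)|>0$ and $\alpha(t,s) = \theta(t,s) - \eta(t)$, the identity $\int_0^1 U(t,s)/V(t)\,ds = 1$ yields
\begin{equation*}
\int_0^1 r(t,s)\cos\alpha(t,s)\,ds = 1, \qquad \int_0^1 r(t,s)\sin\alpha(t,s)\,ds = 0,
\end{equation*}
valid for every $t$.

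Suppose for contradiction that the inclusion fails, and let $t_* \in (0,T]$ be the infimum of failure times. By continuity, at $t_*$ the inclusion still holds and equality is achieved, say $\eta(t_*) = \theta_{\max}(t_*) = \theta(t_*,s_*)$, so $\alpha(t_*,s) \leq 0$ with equality at $s_*$. If additionally $\alpha(t_*,s) \in (-\pi,0]$ for every $s$, then $\sin\alpha(t_*,s) \leq 0$, and combined with $\int r\sin\alpha = 0$ and $r > 0$ this forces $\alpha(t_*,\cdot) \equiv 0 \pmod \pi$; continuity then pins $\alpha(t_*,\cdot) \equiv 0$. Differentiating the imaginary part of the integral identity at $t_*$ yields $\int_0^1 \lambda(s) \partial_t\alpha(t_*,s)\,ds = 0$, where $\lambda(s) = r(t_*,s) > 0$ integrates to $1$. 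For the inclusion to fail just after $t_*$, one needs $\partial_t\alpha(t_*,s) \leq 0$ for all $s$, which combined with the weighted vanishing forces $\partial_t\alpha(t_*,\cdot) \equiv 0$. Iterating with higher time derivatives eventually contradicts the assumption that $\alpha$ becomes strictly negative just after $t_*$.

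The main obstacle will be the case in which $\alpha(t_*,\cdot)$ takes values outside $(-\pi,0]$, where $\sin\alpha$ no longer controls the sign of $\alpha$. To deal with this, I would examine an earlier time $t_{**} \leq t_*$ at which the oscillation $\theta_{\max}(t)-\theta_{\min}(t)$ first reaches $\pi$; up to $t_{**}$ the preceding analysis is in force and yields persistence of the inclusion. An alternative route, which I would pursue in parallel, is to track the clopen decomposition of $Z := \{(s,t)\in[0,1]\times[0,T] : U(t,s)/V(t) \in \R\}$ into the two parts $Z_{\pm}$ on which $U/V$ is a positive or a negative real multiple of $V$; the slice $Z \cap \{t=0\}$ lies entirely in $Z_+$, and a connectedness argument using that $Z_t$ is non-empty for every $t$ (by the intermediate value theorem applied to $\mathrm{Im}(U/V)$, whose integral in $s$ vanishes) should produce the sought-after $z$.
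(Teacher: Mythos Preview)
The paper does not prove this theorem; it is quoted from~\cite{Flo} and used as a black box, so there is no in-paper argument to compare against. Judging your proposal on its own merits, there are two gaps.

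In the ``easy'' case where $\alpha(t_*,\cdot)\equiv 0$, your plan to iterate time-derivatives cannot work: the isotopy is only assumed $C^1$, so $\alpha$ need not be differentiable in $t$, and even in the smooth category the vanishing of all time-derivatives at $t_*$ does not prevent $\alpha$ from becoming strictly negative immediately afterwards (flat functions exist). That said, this case can be rescued without differentiation: if $\alpha(t_*,\cdot)\equiv 0$ and the inclusion fails along $t_n\searrow t_*$ with $\max_s\alpha(t_n,s)<0$, then by uniform continuity $\alpha(t_n,\cdot)\subset(-\pi,0)$ for large $n$, hence $\int_0^1 r\sin\alpha\,ds<0$, contradicting the imaginary part of your integral identity.

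The genuine obstruction is the case you call the ``main obstacle'', where the oscillation of $\alpha(t_*,\cdot)$ is at least $\pi$, and neither of your two sketches closes it. The ``earlier time $t_{**}$'' idea only propagates the inclusion up to $t_{**}$; it says nothing about what happens after the oscillation reaches $\pi$. The clopen decomposition of $Z=\{U/V\in\R\}$ is also insufficient as stated: knowing that each slice $Z_t$ is non-empty only tells you that $U(t,s)/V(t)$ is real for some $s$, which concerns $\alpha\bmod\pi$, not the lifted value $\alpha$ itself. And the integral identity alone does not exclude $\alpha(t,\cdot)\subset(0,2\pi)$: for example, mass concentrated near $\alpha\approx\pi/4$ and $\alpha\approx 7\pi/4$ with suitable weights $r$ satisfies both $\int r\cos\alpha\,ds=1$ and $\int r\sin\alpha\,ds=0$. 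So an argument using only the barycentric identity $\int U/V\,ds=1$ cannot succeed; you need to bring in more structure, e.g.\ working with the divided differences $H(s_1,s_2,t)=\bigl(f_t(\gamma_{s_2})-f_t(\gamma_{s_1})\bigr)/(s_2-s_1)$ on the full triangle $\{0\le s_1\le s_2\le 1\}$, which interpolate between $V=H(0,1,\cdot)$ and $U(\cdot,s)=H(s,s,\cdot)$ and never vanish.
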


From the standing assumption that $\kappa(\gamma_0)>2\pi$ we can choose $0<\varepsilon\ll1$ such that 
\begin{equation}\label{choice_of_eps}
\kappa(\gamma_0)-\varepsilon>2\pi.
\end{equation}
Our main technical statement reads as follows.

\begin{proposition}\label{prop_main_main}
Let $p,q \in \Psi(\{0\}\times \mathring{\D})$ be recurrent points for $\phi^t$ in distinct trajectories. Then 
\begin{equation} 
\ell(p,q) \geq\frac{\kappa(\gamma_0)-\varepsilon-2\pi}{2\pi\tau^2_\max} > 0
\end{equation}
where $\ell(p,q)$ is the number~\eqref{funny_ell}.
\end{proposition}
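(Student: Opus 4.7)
The strategy is to: (i) express $\link(k(T_n, p), k(S_n, q))$ as a double sum of pairwise linkings between single-wind strand sub-loops, (ii) reindex this double sum along diagonals and telescope each diagonal into a single winding, (iii) apply Theorem~\ref{Florio} to rewrite each diagonal winding as a winding of the linearised Reeb flow, and (iv) use the cylinder-to-$\sigma$ frame comparison together with $\kappa(\gamma_0) > 2\pi$ to bound this below.

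Fix $(\{T_n\}, \{S_n\}) \in \mathcal{S}(p, q)$. By the $\partial$-strong property of $D_0 := \Psi(\{0\} \times \D)$, $C^1$-small perturbations of the closing arcs $\alpha_n, \beta_n$ supported in $\mathring{D}_0$ allow the assumption $\phi^{T_n}(p), \phi^{S_n}(q) \in \mathring{D}_0$, and the ensuing estimates apply uniformly, so they transfer to $\link_-$. Writing $p = \Psi(0, z_0)$, $q = \Psi(0, w_0)$, let $N_{p,n}, N_{q,n}$ denote the numbers of returns of the orbits to $D_0$, so $N_{p,n} \geq T_n/\tau_{\max}$ and $N_{q,n} \geq S_n/\tau_{\max}$. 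By property (f) of Lemma~\ref{lemma_property_f}, the orbits in the cylinder coordinates trace $s \mapsto (s \bmod 1, h_s(z_0))$ and $s \mapsto (s \bmod 1, h_s(w_0))$. Decomposing each loop into single-wind sub-loops (closed through $\mathring{D}_0$) indexed by $i \in \{0, \dots, N_{p,n}-1\}$ and $j \in \{0, \dots, N_{q,n}-1\}$ respectively, the rotation-normalised property of $\Psi$ stated before Lemma~\ref{lemma_kappa_Seifert_rotation_numbers} yields
\begin{equation}\label{plan_link_double_sum}
\link\bigl(k(T_n, p), k(S_n, q)\bigr) = \sum_{i=0}^{N_{p,n}-1} \sum_{j=0}^{N_{q,n}-1} \wind_{[0,1]}\bigl(h_t(h^j(w_0)) - h_t(h^i(z_0))\bigr) + O(N_{p,n} + N_{q,n}).
\end{equation}

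Re-indexing by $d = j - i$ and using the extension formula $h_t \circ h^k = h_{t+k}$ from~\eqref{extension of that}, each fixed-$d$ diagonal telescopes (for $d \geq 0$; analogously for $d < 0$):
$$
\sum_{k} \wind_{[0,1]}\bigl(h_t(h^{k+d}(w_0)) - h_t(h^k(z_0))\bigr) = \wind_{[0, N_d]}\bigl(h_s(h^d(w_0)) - h_s(z_0)\bigr),
$$
where $N_d$ is the length of the $d$-th diagonal; one verifies $\sum_d N_d = N_{p,n} N_{q,n}$ and that there are $N_{p,n} + N_{q,n} - 1$ diagonals in total. Applying Theorem~\ref{Florio} to the isotopy $\{h_s\}$, the right-hand side equals $\wind_{[0, N_d]}(Dh_s(\zeta_d) \cdot (h^d(w_0) - z_0))$ for some $\zeta_d \in [z_0, h^d(w_0)] \subset \mathring{\D}$. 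Via the identification $\xi \cong T\D$ on each page (projection along $\R X$), this winding equals the cylinder angular variation of the linearised Reeb flow $D\phi^{t(\cdot, \zeta_d)}$ over a trajectory of length $\approx N_d \tau$. Because $D_0$ is a Seifert disk for $\gamma_0$ with self-linking $-1$, an argument paralleling Lemma~\ref{lemma_rotation_numbers}, extended from closed loops to general orbit arcs via the return-map structure, identifies the cylinder angular variation with the $\sigma$-angular variation minus $2\pi$ times the linking of the orbit arc with $\gamma_0$, which equals $N_d$ up to $O(1)$. Combined with~\eqref{def_number_kappa} and~\eqref{choice_of_eps}, for $N_d$ sufficiently large
$$
\wind_{[0, N_d]}\bigl(h_s(h^d(w_0)) - h_s(z_0)\bigr) \geq \frac{\kappa(\gamma_0) - \epsilon - 2\pi}{2\pi} N_d + O(1),
$$
while the finitely many short diagonals contribute $O(1)$ in total.

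Summing over all diagonals and substituting into~\eqref{plan_link_double_sum} gives
$$
\link\bigl(k(T_n, p), k(S_n, q)\bigr) \geq \frac{\kappa(\gamma_0) - \epsilon - 2\pi}{2\pi} N_{p,n} N_{q,n} + O(N_{p,n} + N_{q,n}).
$$
Dividing by $T_n S_n \leq \tau_{\max}^2 N_{p,n} N_{q,n}$ and taking $\liminf_n$ and $\inf$ over sequences yields $\ell(p, q) \geq (\kappa(\gamma_0) - \epsilon - 2\pi)/(2\pi \tau_{\max}^2) > 0$ by~\eqref{choice_of_eps}. The main obstacle is the verification of~\eqref{plan_link_double_sum} with controlled error: the pairwise strand linkings must combine correctly in the framing of $\Psi$, and the contribution of the auxiliary closing arcs in $\mathring{D}_0$ must be shown to be $O(N_{p,n} + N_{q,n})$ rather than the a priori $O(N_{p,n} N_{q,n})$. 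The cylinder-to-$\sigma$ frame comparison, extended from closed loops to general orbit arcs, is a secondary technical point.
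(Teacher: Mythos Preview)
Your strategy is correct and runs closely parallel to the paper's proof. The four stages you outline---double sum of strand windings, reorganisation into long pieces, Theorem~\ref{Florio}, frame comparison via $\kappa(\gamma_0)$---are exactly the paper's architecture.

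The one structural difference is in step~(ii). You reindex the double sum along diagonals $d=j-i$ and telescope each diagonal directly, using $h_t\circ h^k=h_{t+k}$ to obtain a single winding over $[i_0,i_0+N_d]$. The paper instead passes first to the ${\rm lcm}$-iterates $k(T_n,p)^{r_n(p)}$ and $k(S_n,q)^{r_n(q)}$, so that both loops have the same number $L$ of strands, and then reindexes via the rotation $j\!*\!\Gamma^q(s)=\Gamma^q(s+j)$; each of the $L$ resulting windings over $\R/L\Z$ is then broken at the discontinuities coming from the closing arcs into at most $r_n(p)+r_n(q)$ intervals $I^j_\lambda$. After unwinding the iteration these intervals correspond exactly to your diagonals, so the two reorganisations are equivalent. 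Your version is slightly more direct since the telescoped diagonals carry no internal discontinuities; the paper's version makes the closing-arc bookkeeping (your ``$O(N_{p,n}+N_{q,n})$ obstacle'') more transparent, because in the iterated picture only the sub-strands $\hat k^p_{m_n(p)-1}$, $\hat k^q_{m_n(q)-1}$ carry the actual closing arc, and the ``waiting'' construction makes all other $e_{ij}$ vanish.

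On the frame comparison you are a bit off: the relevant input is not Lemma~\ref{lemma_rotation_numbers} but rather (i) property~(e) of Proposition~\ref{prop_main}, which says the projected global frame has bounded argument on $\mathring\D$ and hence controls the closing path in the winding of $c(s)=X^s_1(s,h_s(\zeta))$ by an $O(1)$ constant, and (ii) the fact from~\cite{openbook} that any periodic orbit linking $\gamma_0$ once has self-linking number $-1$, which pins down the winding of the frame along a closed reference orbit to be exactly $-1$ per return. Together these give ${\rm wind}_{[0,N_d]}(c(s))=-N_d+O(1)$, which is the $-2\pi$ in your final formula. This is the content of Lemma~\ref{lemma_main_winding_estimate} and is the step that genuinely requires ingredients beyond elementary linking algebra.
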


{

\subsubsection*{Key ideas of the proof of Proposition~\ref{prop_main_main}}

We resume here the main ideas and steps of the proof of Proposition~\ref{prop_main_main}; the proof can be found in the next subsection.
For simplicity, consider periodic points $p,q\in\Psi(\{0\}\times\D)$ of the flow $\{\phi^t\}$ with period $T_p,T_q$ respectively.
The quantity we want to estimate is $\ell(p,q) = \frac{\mathrm{link}(\gamma_p,\gamma_q)}{T_pT_q}$ where $\gamma_p=\phi^{[0,T_p]}(p), \gamma_q=\phi^{[0,T_q]}(q)$.
We now explain why these periodic orbits are positively linked when $\kappa(\gamma_0) > 2\pi$ further assuming that both have the same linking number~$n$ with $\gamma_0$, and that $n$ is large enough.
This means that $\Psi^{-1}(p)$ and $\Psi^{-1}(q)$ have the same period $n\in\Z$ with respect to the flow $\phi^t=\varphi^s$ parametrised by the ``flipping pages'' parameter~$s$. 

Recall that $\{\varphi^s\}$ induces a smooth isotopy $\{h_s\}$ of self-diffeomorphisms of $\D$. Because of the transversality of the flow to each page $\{s\}\times\D$, the linking number can be computed in terms of winding numbers with respect to the isotopy $\{h_s\}$, if we assume that $\Psi$ satisfies the following normalisation condition: given loops $c_1,c_2:\R/\Z \to \mathring{\D}$ satisfying $c_1(s) \neq c_2(s)$~$\forall s$, the loops $s \mapsto \Psi(s,c_1(s))$ and $s \mapsto \Psi(s,c_2(s))$ in $S^3$ have linking number equal to $\wind_{s\in[0,1]} (c_2(s)-c_1(s))$.
Consider the sets $\mathcal{P} := \Psi^{-1}(\gamma_p) \cap \{0\}\times\D$ and $\mathcal{Q} := \Psi^{-1}(\gamma_q) \cap \{0\}\times\D$. 
Then one chooses $(0,y_0)\in\mathcal{Q}$ arbitrarily to get
\begin{equation}
\label{eq_summing_up}
\begin{aligned}
\mathrm{link}(\gamma_p,\gamma_q) &= \sum_{(0,x)\in\mathcal{P}}\sum_{(0,y)\in\mathcal{Q}}\mathrm{wind}_{s\in[0,1]}(h_s(y)-h_s(x)) \\
&= \sum_{(0,x)\in\mathcal{P}}\mathrm{wind}_{s\in[0,n]}(h_s(y_0)-h_s(x)) \, .
\end{aligned}
\end{equation}
By Theorem~\ref{Florio}, we can replace each winding number with a linearised one, that is, for each $(0,x)\in\mathcal{P}$ there exists $z(x)\in\D$ such that
\[
\mathrm{wind}_{s\in[0,n]}(h_s(y_0)-h_s(x))=\mathrm{wind}_{s\in[0,n]}(Dh_s(z(x))(y_0-x))\,.
\]
Consider the (trivial) rank-$2$ vector bundle over the $3$-manifold $\R/\Z \times \mathring{\D}$ whose fiber over the point $(s,z)$ is $T_z\D = \R^2$.
We can choose a special frame of this vector bundle with the following property: its push forward by $\Psi$ projects along the Reeb vector field to a frame of $\xi$ on $S^3 \setminus\gamma_0$ that extends to a global frame of $\xi$ on $S^3$.
By definition
\[
\mathrm{wind}_{s\in[0,n]}^{\mathrm{Global}}(Dh_s(z(x))(y_0-x))\geq \dfrac{\kappa(\gamma_0)-\varepsilon}{2\pi}n\,,
\]
where $\mathrm{wind}_{s\in[0,n]}^{\mathrm{Global}}$ denotes the winding number with respect to a special frame mentioned above, provided that $0<\varepsilon < \kappa(\gamma_0)-2\pi$ and that $n$ is large enough.
Denoting then by $X=\{X_s\}$ a non-autonomous vector field on $\mathring{\D}$ that is constant in the above mentioned special frame, we have
\[
\begin{aligned}
& \mathrm{wind}_{s\in[0,n]}(Dh_s(z(x))(y_0-x)) \\
& \quad = \mathrm{wind}_{s\in[0,n]}^{\mathrm{Global}}(Dh_s(z(x))(y_0-x))+\mathrm{wind}_{s\in[0,n]}(X(h_s(z(x)))) \,.
\end{aligned}
\]
Informally speaking, at each turn of the solid torus the above mentioned special frame rotates roughly once in the negative sense with respect to the Seifert frame. 
This is not quite precise, but is effectively precise when $n$ is large enough; later, in the actual proof of Proposition~\ref{prop_main_main}, we need to be careful about this point.
Hence
\[
\mathrm{wind}_{s\in[0,n]}(Dh_s(z(x))(y_0-x))\geq \dfrac{n}{2\pi}(\kappa(\gamma_0)-\varepsilon-2\pi)\,.
\]
Coming back to~\eqref{eq_summing_up}, and since the cardinality of $\mathcal{P}$ is $n$, we obtain that
\[
\dfrac{\mathrm{link}(\gamma_p,\gamma_q)}{T_pT_q}\geq \dfrac{n^2}{2\pi T_pT_q}(\kappa(\gamma_0)-\varepsilon-2\pi)\,.
\]
By the control of the flipping pages parameter $s$ with respect to time parameter $t$ of the original Reeb flow, it holds $n^2/T_pT_q\geq 1/\tau_\max^2$. One gets $\ell(p,q)\geq \frac{\kappa(\gamma_0)-\varepsilon-2\pi}{2\pi\tau_\max^2}$, which is strictly positive. 
When $n$ is not large enough, or when the linking numbers $n_p$ and $n_q$ of $\gamma_p$ and $\gamma_q$ with $\gamma_0$, respectively, do not coincide, then one applies the above arguments with a very large common multiple of $n_p$ and $n_q$ in the place of $n$.
In the sequel, we formalise these ideas and prove Proposition~\ref{prop_main_main}.

}

\subsubsection*{Proof of Proposition~\ref{prop_main_main}\label{detailed}}

From now on we are concerned with the proof of Proposition~\ref{prop_main_main}.
{We define loops $k(T_n,p),k(S_n,q)$ as in case {\bf A} of subsection~\ref{ssec_right_handedness}, but make here a more general construction.
We denote by $(s,z=x+iy)$ the coordinates on the domain $\R/\Z\times\D$ of the map~$\Psi$.
The points $p,q$ are assumed to belong to $\Psi(\{0\}\times \mathring{\D})$.
Choose an auxiliary Riemannian metric $g$ on $S^3$ that is Euclidean in local coordinates near~$p$ and~$q$ with respect to which the vector field~$X$ is constant.
It can also be assumed that $g$ induces the metric $dx^2+dy^2$ tangentially to $\Psi(\{0\}\times \mathring{\D})$.
The metric $g$ can easily be constructed by considering flow boxes for $X$ near $p$ and $q$ obtained by flowing small disks in $\Psi(\{0\}\times \mathring{\D})$ centred at these points.
Let $\mathcal{W}_p$ and $\mathcal{W}_q$ be small disjoint $g$-convex neighbourhoods of $p$ and $q$, respectively, where the metric $g$ has the above properties. We can assume that $\mathcal{W} = \mathcal{W}_p\cup\mathcal{W}_q \subset \Psi([-1/4,1/4]\times\D)$.
Consider shortest geodesic paths $\alpha_n \subset \mathcal{W}_p$ and $\beta_n \subset\mathcal{W}_q$ from $\phi^{T_n}(p)$ to $p$ and from $\phi^{S_n}(q)$ to $q$, respectively. 
The sequences $T_n,S_n$ are assumed to satisfy $$ T_n,S_n \to +\infty \quad \phi^{T_n}(p) \in \mathcal{W}_p \quad \phi^{S_n}(q) \in \mathcal{W}_q \, . $$
Note that such sequences exist if $p$ and $q$ are recurrent points, but here we only assume that $p$ and $q$ lie in distinct orbits, and that $T_n,S_n$ as above exist.
We find real numbers $s_n,\hat s_n$ and $z_n,\hat z_n \in \mathring{\D}$ such that $\phi^{T_n}(p)=\Psi(s_n,z_n)$, $\phi^{S_n}(q) = \Psi(\hat s_n,\hat z_n)$, $\max(|s_n|,|\hat s_n|) \leq 1/4$.}

There exist $m_n(p),m_n(q)\geq 1$ and 
\begin{equation*}
t^p_0=0<t^p_1<\dots<t^p_{m_n(p)}\leq T_n \qquad \qquad t^q_0=0<t^q_1<\dots<t^q_{m_n(q)}\leq S_n
\end{equation*}
characterized by
\begin{equation*}
\begin{aligned}
\{ t^p_0,\dots,t^p_{m_n(p)}\} &= \{t\in[0,T_n] \mid \phi^t(p) \in \Psi(\{0\}\times\mathring{\D}) \} \\
\{ t^q_0,\dots,t^q_{m_n(q)}\} &= \{t\in[0,S_n] \mid \phi^t(q) \in \Psi(\{0\}\times\mathring{\D}) \} \, .
\end{aligned}
\end{equation*}
{ Let $t^p_{m_n(p)+1}$ denote the next hitting time following $t^p_{m_n(p)}$.
We have estimates}
\begin{equation*}
m_n(p) \tau_\min \leq t^p_{m_n(p)} \leq T_n < t^p_{m_n(p)+1} \leq t^p_{m_n(p)} + \tau_\max \leq (m_n(p)+1)\tau_\max 
\end{equation*}
and it follows that
\begin{equation}\label{estimates_closing_times_p}
\frac{m_n(p)}{T_n} \leq \frac{1}{\tau_\min}, \qquad\qquad \frac{T_n}{\tau_\max}-1 \leq m_n(p) \, . 
\end{equation}
Analogously
\begin{equation}\label{estimates_closing_times_q}
\frac{m_n(q)}{S_n} \leq \frac{1}{\tau_\min}, \qquad\qquad \frac{S_n}{\tau_\max}-1 \leq m_n(q) \, . 
\end{equation}

The loop $k(T_n,p)=\phi^{[0,T_n]}(p) + \hat\alpha_n$ is obtained by concatenating to $\phi^{[0,T_n]}(p)$ a $C^1$-small perturbation $\hat{\alpha}_n$ of $\alpha_n$ with fixed endpoints, and $k(S_n,q) = \phi^{[0,S_n]}(q) + \hat\beta_n$ is obtained by concatenating to $\phi^{[0,S_n]}(q)$ a $C^1$-small perturbation $\hat{\beta}_n$ of $\beta_n$ with fixed endpoints. 
The $\hat{\alpha}_n,\hat{\beta}_n$ are chosen so that $k(T_n,p)$ and $k(S_n,q)$ do not intersect. 
Observe that different choices of $\hat{\alpha}_n,\hat{\beta}_n$ determine an ambiguity by an additive integer in $[-m_n(p)-m_n(q),m_n(p)+m_n(q)]$ of the value of $\link(k(T_n,p),k(S_n,q))$. 
This is so because $\hat{\alpha}_n,\hat{\beta}_n$ are $C^1$-close enough to $\alpha_n,\beta_n$.
As in subsection~\ref{ssec_right_handedness}, we continue to denote
$$
\link_-(\phi^{[0,T_n]}(p),\phi^{[0,S_n]}(q)) = \liminf_{\tiny \hat\alpha\stackrel{C^1}{\to}\alpha_n \ \hat\beta\stackrel{C^1}{\to}\beta_n} \ \link(k(T_n,p),k(S_n,q)) \, .
$$
In view of~\eqref{estimates_closing_times_p}-\eqref{estimates_closing_times_q} {and of the fact that $\hat{\alpha}_n,\hat{\beta}_n$ are $C^1$-close enough to $\alpha_n,\beta_n$, we deduce the estimate
\begin{equation}\label{auxi_link_link_minus}
\begin{aligned}
& \left| \frac{\link_-(\phi^{[0,T_n]}(p),\phi^{[0,S_n]}(q))}{T_nS_n} -\frac{\link(k(T_n,p),k(S_n,q))}{T_nS_n} \right| 
\leq \frac{1}{\tau_\min}\left(\frac{1}{S_n}+\frac{1}{T_n} \right) \, . 
\end{aligned}
\end{equation}}

{ We now study the effect of replacing $T_n$ by a different sequence $T'_n\to+\infty$ such that $\{t \in \R \mid \phi^t(p) \in \mathcal{W}_p \}$ contains the closed interval whose endpoints are $T_n,T'_n$.
Let $\alpha_n'$ be the shortest geodesic arc from $\phi^{T'_n}(p)$ to $p$, which must again satisfy~$\alpha_n' \subset \mathcal{W}$.
Let $\hat\alpha_n'$ be a small $C^1$-perturbation of $\alpha'_n$ with fixed endpoints, so that the corresponding loop $k(T'_n,p)$ as described before does not intersect $k(S_n,q)$.}
Construct the loop $c=(-\hat\alpha'_n) + \phi^{T'_n \to T_n}(p) + \hat\alpha_n$.
For $n$ large we estimate
\begin{equation}
\label{estimate_little_loop_c}
|\link(c,k(S_n,q))| \leq 2m_n(q)
\end{equation}
provided $\hat\alpha_n',\hat\alpha_n$ are $C^1$-close enough to $\alpha'_n,\alpha_n$. The identity $k(T'_n,p) + c=k(T_n,p)$ implies that
\begin{equation*}
e_n = \frac{\link(k(T_n,p),k(S_n,q))}{T_nS_n} - \frac{\link(k(T_n',p),k(S_n,q))}{T_n'S_n}
\end{equation*}
can be estimated by
\begin{equation*}
\begin{aligned}
|e_n| &= \left| \frac{\link(k(T_n,p),k(S_n,q))}{T_nS_n} - \frac{\link(k(T_n',p),k(S_n,q))}{T_n'S_n} \right| \\
&= \left| \frac{\link(k(T_n,p),k(S_n,q))}{T_nS_n} - \frac{\link(k(T_n,p),k(S_n,q))-\link(c,k(S_n,q))}{T_nS_n}\frac{T_n}{T'_n} \right| \\
&\leq \left| 1-\frac{T_n}{T'_n} \right|\cdot \left| \frac{\link(k(T_n,p),k(S_n,q))}{T_nS_n} \right| + \left| \frac{\link(c,k(S_n,q))}{S_nT'_n} \right| \\
&\leq \left| 1-\frac{T_n}{T'_n} \right| \cdot\left| \frac{\link(k(T_n,p),k(S_n,q))}{T_nS_n} \right| + \frac{2m_n(q)}{S_nT'_n}
\end{aligned}
\end{equation*}
provided $\hat\alpha_n',\hat\alpha_n,\hat\beta_n$ are $C^1$-close enough to $\alpha'_n,\alpha_n,\beta_n$. Together with~\eqref{estimates_closing_times_p} and~\eqref{auxi_link_link_minus} this implies
\begin{equation}\label{long_estimate_link_minus}
\begin{aligned}
& \frac{\link_-(\phi^{[0,T'_n]}(p),\phi^{[0,S_n]}(q))}{T'_nS_n} \\
& \geq \frac{\link(k(T_n',p),k(S_n,q))}{T_n'S_n} - \frac{1}{\tau_\min}\left(\frac{1}{S_n}+\frac{1}{T_n'} \right) \\
&\geq \frac{\link(k(T_n,p),k(S_n,q))}{T_nS_n} - |e_n| \ - \ \frac{1}{\tau_\min}\left(\frac{1}{S_n}+\frac{1}{T_n'} \right) \\
&\geq \frac{\link(k(T_n,p),k(S_n,q))}{T_nS_n} - \left|1-\frac{T_n}{T'_n} \right| \cdot\left| \frac{\link(k(T_n,p),k(S_n,q))}{T_nS_n} \right| \\
& \qquad - \frac{2}{\tau_\min }\frac{1}{T'_n} - \frac{1}{\tau_\min}\left(\frac{1}{S_n}+\frac{1}{T_n'} \right) \\
& \geq \frac{\link_-(\phi^{[0,T_n]}(p),\phi^{[0,S_n]}(q))}{T_nS_n} - \left|1-\frac{T_n}{T'_n} \right|\cdot \left| \frac{\link_-(\phi^{[0,T_n]}(p),\phi^{[0,S_n]}(q))}{T_nS_n} \right| \\
& \qquad - \frac{2}{\tau_\min }\frac{1}{T'_n} - \frac{1}{\tau_\min}\left(\frac{1}{S_n}+\frac{1}{T_n'} \right) - \frac{1}{\tau_\min}\left(\frac{1}{S_n}+\frac{1}{T_n} \right)\cdot\left|1-\frac{T_n}{T'_n} \right| \\
& \qquad - \frac{1}{\tau_\min}\left(\frac{1}{S_n}+\frac{1}{T_n} \right)
\end{aligned}
\end{equation}
provided $\hat\alpha_n',\hat\alpha_n,\hat\beta_n$ are $C^1$-close enough to $\alpha'_n,\alpha_n,\beta_n$. 
Estimate~\eqref{long_estimate_link_minus} and an analogous argument interchanging the roles of $p$ and $q$ show the following statement. {If there exists $a>0$ such that for every pair of sequences $T_n$, $S_n$ satisfying $T_n\to+\infty$, $S_n\to+\infty$, $\phi^{T_n}(p) \in \mathcal{W}_p$ and $\phi^{S_n}(q) \in \mathcal{W}_q$, one finds sequences $T_n'$, $S_n'$ satisfying the same properties and, in addition, $\{t \in \R \mid \phi^t(p) \in \mathcal{W}_p \}$ contains the closed interval bounded by $T_n$ and $T'_n$, $\{t \in \R \mid \phi^t(q) \in \mathcal{W}_q \}$ contains the closed interval bounded by $S_n$ and~$S'_n$,
\begin{equation*}
\sup_n \ |T_n-T'_n| + |S_n-S'_n| < +\infty \qquad \liminf_{n\to\infty} \ \frac{\link_-(\phi^{[0,T'_n]}(p),\phi^{[0,S'_n]}(q))}{T'_nS'_n} \geq a
\end{equation*}
then
\begin{equation*}
\ell(p,q) \geq a.
\end{equation*}
It follows from these arguments that $\phi^{T_n}(p),\phi^{S_n}(q)$ can be assumed to belong to $\Psi(\{0\}\times\mathring{\D})$.}
We proceed under this assumption.


%
%
%
%



Since $\phi^{T_n}(p) \in \mathcal{W}_p \cap \Psi(\{0\}\times\D)$, $\phi^{S_n}(q) \in \mathcal{W}_q \cap \Psi(\{0\}\times\D)$, and the metric has a special form on $\mathcal{W}$, we have that $\alpha_n,\beta_n$ are images under $\Psi$ of straight line segments in $\{0\}\times\mathring{\D}$. Denote by $z_0^p,\dots,z_{m_n(p)}^p$ and by $z_0^q,\dots,z_{m_n(q)}^q \in \mathring{\D}$ the points in $\mathring{\D}$ uniquely determined by
\begin{equation*}
\Psi(0,z^p_i) = \phi^{t^p_i}(p), \qquad \qquad \Psi(0,z^q_j) = \phi^{t^q_j}(q).
\end{equation*}
It follows that $$ \alpha_n = \Psi(\{0\}\times[z_{m_n(p)}^p,z_0^p]) \qquad\qquad \beta_n = \Psi(\{0\}\times[z_{m_n(q)}^p,z_0^q]) $$ where $[z,w]$ denotes the path $u \in [0,1] \mapsto (1-u)z+u w$. By transversality of the flow to $\Psi(\{0\}\times\mathring{\D})$ we can choose $\hat\alpha_n,\hat\beta_n$ to be {equal to $\Psi(\{0\}\times c_n^p)$ and $\Psi(\{0\}\times c_n^q)$, respectively, for some paths $c_n^p,c_n^q:[0,1]\to\mathring{\D}$, and so }contained in $\Psi(\{0\}\times\mathring{\D})$. 
 Define paths
\begin{equation*}
k^p_0,\dots,k^p_{m_n(p)-1},k^q_0,\dots,k^q_{m_n(q)-1}:[0,1] \to \mathring{\D}
\end{equation*}
by 
\begin{equation*}
\Psi(s,k^p_i(s)) = \phi^{t(s,z^p_i)}(\Psi(0,z^p_i)) \qquad\qquad \Psi(s,k^q_j(s)) = \phi^{t(s,z^q_j)}(\Psi(0,z^q_j))
\end{equation*}
where $t(s,z)$ is the function given by Lemma~\ref{lemma_flipping_pages_time}. It follows that
\begin{equation*}
\begin{aligned}
& i \in \{0,\dots,m_n(p)-1\} \qquad \Rightarrow \qquad k^p_i(0) = z^p_i, \qquad k^p_i(1) = z^p_{i+1} \\
& j \in \{0,\dots,m_n(q)-1\} \qquad \Rightarrow \qquad k^q_j(0) = z^p_j, \qquad k^q_j(1) = z^q_{j+1}
\end{aligned}
\end{equation*}
Fix $0<\delta <1$. Consider paths
\begin{equation*}
\hat k^p_i,\hat k^q_j:[0,1] \to \mathring{\D} \qquad (i,j) \in \{0,\dots,m_n(p)-1\} \times \{0,\dots,m_n(q)-1\}
\end{equation*}
defined by
\begin{equation}
\begin{aligned}
& \text{If} \ 0 \leq i \leq m_n(p)-2: \ \hat k^p_i(s) = \begin{cases} k^p_i(\frac{s}{1-\delta}) \ & \text{if} \ s\in[0,1-\delta] \\ z^p_{i+1} \ & \text{if} \ s\in[1-\delta,1] \end{cases} \\
& \text{If} \ 0 \leq j \leq m_n(q)-2: \ \hat k^q_j(s) = \begin{cases} k^q_j(\frac{s}{1-\delta}) \ & \text{if} \ s\in[0,1-\delta] \\ z^q_{j+1} \ & \text{if} \ s\in[1-\delta,1] \end{cases}
\end{aligned}
\end{equation}
and 
\begin{equation*}
\begin{aligned}
& \hat k^p_{m_n(p)-1}(s) = \begin{cases} k^p_{m_n(p)-1}(\frac{s}{1-\delta}) & \text{if} \ s\in[0,1-\delta] \\ c^p_n(\frac{s-1+\delta}{\delta}) & \text{if} \ s\in[1-\delta,1] \end{cases} \\
& \hat k^q_{m_n(q)-1}(s) = \begin{cases} k^q_{m_n(q)-1}(\frac{s}{1-\delta}) & \text{if} \ s\in[0,1-\delta] \\ c^q_n(\frac{s-1+\delta}{\delta}) & \text{if} \ s\in[1-\delta,1] \end{cases}
\end{aligned}
\end{equation*}
In the following we extend $k^p_i,\hat k^p_i$ to all $i\in\Z$ in a $m_n(p)$-periodically way. Similarly, we extend $k^q_j,\hat k^q_j$ to all $j\in\Z$ in a $m_n(q)$-periodically way. With this convention we have
\begin{equation*}
\hat k^p_i(1) = \hat k^p_{i+1}(0) \ \forall i \qquad \hat k^q_j(1) = \hat k^q_{j+1}(0) \ \forall j
\end{equation*}
Let $L = {\rm lcm}(m_n(p),m_n(q))$ and $r_n(p),r_n(q) \in \N$ satisfy
\begin{equation*}
L = r_n(p)m_n(p) = r_n(q)m_n(q)
\end{equation*}
Let us denote by $k(T_n,p)^{r_n(p)}, k(S_n,q)^{r_n(q)}$ the $r_n(p)$-fold iteration of $k(T_n,p)$ and the $r_n(q)$-fold iteration of $k(S_n,q)$, respectively. Then, by the transversality of $\phi^t$ to each $\Psi(\{s\}\times\mathring\D)$, it holds
\begin{equation}\label{identity_linking_winding}
\link(k(T_n,p)^{r_n(p)}, k(S_n,q)^{r_n(q)}) = \sum_{i,j=0}^{L-1} \wind_{s\in[0,1]}(\hat k^p_i(s)-\hat k^q_j(s)).
\end{equation}
With $i,j \in \{0,\dots,L-1\}$ consider
\begin{equation}
e_{ij} = \wind_{s\in[1-\delta,1]}(\hat k^p_i(s)-\hat k^q_j(s)).
\end{equation}
For each pair $(i,j)$ there are two cases to be studied separately:
\begin{itemize}
	\item[(i)] 	$m_n(p)$ does not divide $i+1$ and $m_n(q)$ does not divide $j+1$.
	\item[(ii)] $m_n(p)$ divides $i+1$ or $m_n(q)$ does divides $j+1$.
\end{itemize}
In case (i) both paths $\hat k^p_i|_{[1-\delta,1]}$ and $\hat k^q_j|_{[1-\delta,1]}$ are constant paths, hence $e_{ij}=0$. In case (ii) there are two subcases: either one of the paths is a $C^1$-perturbation of a straight line segment in $\mathring{\D}$ and the other is a point in the complement, or both are contained in disjoint open balls (assuming $n$ is large enough). In both subcases we get $|e_{ij}|\leq1$. The number of multiples of $m_n(p)$ in $[0,L-1]$ is $r_n(p)$, and the number of multiples of $m_n(q)$ in $[0,L-1]$ is $r_n(q)$. 
Hence, there at most {$L(r_n(p)+r_n(q))$} pairs $(i,j)$ falling in case (ii), and this yields the estimate
\begin{equation}\label{estimate_e_ij}
\sum_{i,j=0}^{L-1} |e_{ij}| \leq { L(r_n(p)+r_n(q))} \, . 
\end{equation}
Now note that 
\begin{equation}\label{winding_reparametrization}
\sum_{i,j=0}^{L-1} \wind_{s\in[0,1]}(\hat k^p_i(s)-\hat k^q_j(s)) 
 = \sum_{i,j=0}^{L-1} \wind_{s\in[0,1]}(k^p_i(s)-k^q_j(s)) + e_{ij}\,.
\end{equation}
Combining~\eqref{identity_linking_winding},~\eqref{estimate_e_ij} and~\eqref{winding_reparametrization}
\begin{equation}\label{broken_winding_pre}
\begin{aligned}
& \left| \link(k(T_n,p)^{r_n(p)}, k(S_n,q)^{r_n(q)}) - \sum_{i,j=0}^{L-1} \wind_{s\in[0,1]}(k^p_i(s)-k^q_j(s)) \right| \\
& \qquad \leq { L(r_n(p)+r_n(q))} \, . 
\end{aligned}
\end{equation}

Now define maps
\begin{equation}
\begin{aligned}
& \Gamma^p:\R/L\Z \to \mathring{\D} \qquad \Gamma^p(s) = k^p_{\lfloor s \rfloor}(s-\lfloor s \rfloor) \\
& \Gamma^q:\R/L\Z \to \mathring{\D} \qquad \Gamma^q(s) = k^q_{\lfloor s \rfloor}(s-\lfloor s \rfloor)
\end{aligned}
\end{equation}
Note that $\Gamma^p$ is discontinuous, unless $p$ lies on a periodic orbit. Analogously, $\Gamma^q$ is discontinuous, unless $q$ lies on a periodic orbit. For each $j\in\Z/L\Z$ define:
\begin{equation}
j*\Gamma^q(s) = \Gamma^q(s+j)\,.
\end{equation}
Define
\begin{eqnarray*}
	& E_p = \{0,m_n(p),\dots,(r_n(p)-1)m_n(p)\}\,, \\
	& E_q = \{0,m_n(q),\dots,(r_n(q)-1)m_n(q)\}\,.
\end{eqnarray*}
We view $E_p,E_q$ as subsets of $\Z/L\Z$. The set of discontinuity points of $\Gamma^p$ is contained in $E_p$ whose cardinality is equal to $r_n(p)$. The set of discontinuity points of $j*\Gamma^q$ is contained in $E_q-j$ whose cardinality is $r_n(q)$. We then write
\begin{equation}\label{broken_winding}
\sum_{i,j=0}^{L-1} \wind_{s\in[0,1]}(k^p_i(s)-k^q_j(s)) 
= \sum_{j=0}^{L-1} \wind_{s\in[0,L]}(\Gamma^p(s)-j*\Gamma^q(s))\,.
\end{equation}
Note that the set $E_p \cup (E_q-j)$ divides the circle $\R/L\Z$ into $N_j$ closed intervals $I^j_1,\dots,I^j_{N_j}$. Note that $N_j \leq r_n(p)+r_n(q)$. We denote the end points of these intervals by $I^j_\lambda = [a^j_\lambda,b^j_\lambda]$ and their lengths by $|I^j_\lambda| = b^j_\lambda - a^j_\lambda \in \N$. The end points of these intervals belong to $\Z/L\Z$. For each $I^j_\lambda=[a^j_\lambda,b^j_\lambda]$ we find points $\zeta_\lambda,\zeta'_\lambda \in \mathring{\D}$ such that
\begin{equation}
h_s(\zeta_\lambda) = \Gamma^p(s+a^j_\lambda) \qquad h_s(\zeta'_\lambda) = j*\Gamma^q(s+a^j_\lambda) \qquad \forall s\in[0,b^j_\lambda-a^j_\lambda]
\end{equation}
where $h_s$ is the unique extension of the isotopy~\eqref{isotopy_h_tube} to $s\in[0,+\infty)$ determined by the identity
\begin{equation}
\label{isotopy_extended}
h_{s+1}=h_s\circ h \qquad\qquad s\in[0,+\infty).
\end{equation}
{Since $\{D_s\}_{s\in\R/\Z}$ is a smooth foliation, the isotopy~\eqref{isotopy_extended} is smooth.} 
In particular, observe that, for every $j=0,\dots,L-1$, it holds
\begin{equation}\label{eq to explain Njlambda}
\wind_{s\in[0,L]}(\Gamma^p(s)-j*\Gamma^q(s))= \sum_{\lambda=1}^{N_j}\wind_{s\in[0,b^j_\lambda-a^j_\lambda]}(h_s(\zeta_\lambda)-h_s(\zeta'_\lambda)).
\end{equation}

	~\newline
	Recall that, from \eqref{choice_of_eps}, $\kappa(\gamma_0)-\varepsilon>2\pi$. Note that, for $x\in S^3\setminus\gamma_0$,
	$$
	\text{link}(k(T,x;D),\gamma_0)=\#\{ t\in[0,T] \ \vert\  \phi^t(x)\in \Psi(\{0\}\times\mathring{\mathbb{D}}) \}+\ell,
	$$
	for some $\ell\in \{-1,0,1\}$, where $\#\{ t\in[0,T] \ \vert\  \phi^t(x)\in \Psi(\{0\}\times\mathring{\mathbb{D}}) \}$ is the number of times that $\phi^{[0,T]}(x)$ meets $\Psi(\{0\}\times\mathring{\mathbb{D}})$. Observe that
	$$
\kappa(\gamma_0)=\liminf_{T\rightarrow+\infty}\,\inf_{\substack{x\in S^3\setminus\gamma_0 \\ u\in\xi_x\setminus 0}}\,\dfrac{\Delta\tilde{\Theta}_\sigma(T;x,u)}{\#\{ t\in[0,T] \ \vert\  \phi^t(x)\in \Psi(\{0\}\times\mathring{\mathbb{D}}) \}}.
	$$
	 Thus, there exists $M=M(\varepsilon)>0$ such that for every $x\in S^3\setminus\gamma_0$ and every $u\in \xi_x\setminus 0$,
	\begin{equation}\label{how to use kappagamma}
	T\geq M \qquad \Rightarrow \qquad \dfrac{\Delta\tilde{\Theta}_{\sigma}(T; x, u)}{\#\{ t\in[0,T] \ \vert\  \phi^t(x)\in \Psi(\{0\}\times\mathring{\mathbb{D}}) \}}>\kappa(\gamma_0)-\varepsilon \, .
	\end{equation}
	
	Observe that, by the extension of the function $t$ in~\eqref{extension of that} and by $(ii)$ of Lemma~\ref{lemma_flipping_pages_time}, for every $z\in\mathbb{D}$, if $s\geq \frac{M}{c\tau_\min}$, then $t(s,z)\geq M$. 
	{According to Lemma~\ref{lemma_flipping_pages_time} the constant $c$ can be any number in $(0,1)$ fixed {\it a priori}, independent of $p$ and $q$.}
	Let us define
	\begin{equation}\label{def mathscrM}
	\mathscr{M}:=\sup_{\substack{z\in\mathbb{D} \\ u\in T_z\mathbb{D}\setminus 0}} \ \max_{1\leq i\leq \big\lfloor \frac{M}{c\tau_\min}\big\rfloor +1}\ \vert \wind_{s\in[0,i]}(Dh_s(z)u)\vert<+\infty \, .
	\end{equation}
	
	\begin{remark}\label{rmk 1}
	{	
		For every $\lambda$ such that $|I_\lambda^j|<\frac{M}{c\tau_\min}$, by Theorem~\ref{Florio} and by~\eqref{def mathscrM}, we deduce that
		$$
		\wind_{s\in [0,b^j_\lambda-a^j_\lambda]}(h_s(\zeta_\lambda)-h_s(\zeta_\lambda'))\geq -\mathscr{M}\,.
		$$}
	\end{remark}

\begin{lemma}\label{lemma_main_winding_estimate}
There exists a constant $C>0$ independent of $p,q,\{S_n\},\{T_n\}$ such that for each $\lambda \in \{1,\dots,N_j\}$ satisfying $|I^j_{\lambda}| = b_{\lambda}^j-a_{\lambda}^j\geq \frac{M}{c\tau_\min}$, the estimate
\begin{equation*}
\wind_{s\in [0,b^j_\lambda-a^j_\lambda]}(h_s(\zeta_\lambda)-h_s(\zeta'_\lambda)) \geq \left( \kappa(\gamma_0)-\varepsilon-2\pi \right)  \frac{|I^j_\lambda|}{2\pi} \ - \ C
\end{equation*}
holds.
\end{lemma}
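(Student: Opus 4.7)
The plan is to combine three ingredients. First, I apply Theorem~\ref{Florio} to the extended isotopy $\{h_s\}_{s\geq 0}$ at the pair $\zeta_\lambda,\zeta'_\lambda \in \mathring{\D}$, producing $z_\lambda \in [\zeta'_\lambda,\zeta_\lambda]$ with
\[
\wind_{s\in[0,|I^j_\lambda|]}(h_s(\zeta_\lambda) - h_s(\zeta'_\lambda))
= \wind_{s\in[0,|I^j_\lambda|]}\bigl(Dh_s(z_\lambda)(\zeta_\lambda - \zeta'_\lambda)\bigr),
\]
reducing the problem to a lower bound on the winding of a linearised orbit of the isotopy.

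The central step is a uniform comparison lemma: there exists $B>0$ depending only on $\Psi$ and $\sigma$ such that for every $z\in\mathring{\D}$, every $u\in T_z\mathring{\D}\setminus\{0\}$ and every $s_0\geq 0$,
\[
\Bigl|\,2\pi\,\wind_{s\in[0,s_0]}(Dh_s(z)\,u)
\;-\;\Delta\tilde\Theta_\sigma(t(s_0,z);\Psi(0,z),\pi_\xi(d\Psi(0,z)u))
\;+\;2\pi s_0\,\Bigr|\;\leq\;B,
\]
where $\pi_\xi:TS^3\to\xi$ is the projection along the Reeb vector field. Differentiating the flow identity $\Psi(s,h_s(z))=\varphi^{t(s,z)}(0,z)$ in $z$ and projecting to $\xi$ gives the pointwise equality $\pi_\xi(d\Psi(s,h_s(z))(0,Dh_s(z)u))=D\phi^{t(s,z)}(\Psi(0,z))\cdot\pi_\xi(d\Psi(0,z)u)$, so the lemma reduces to comparing the $\D$-angle of $Dh_s(z)u$ with the $\sigma$-angle of this common contact vector. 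Property~(e) of Proposition~\ref{prop_main}, propagated from the base page $\{0\}\times\D$ to every page via continuity and the flow, controls the pointwise discrepancy, and the normalisation $\link(\Psi(\cdot,\alpha),\Psi(\cdot,\beta))=\wind_{[0,1]}(\beta-\alpha)$ imposed on $\Psi$ forces the linear drift per $s$-cycle to be exactly $2\pi$, matching the Hopf benchmark in which $\kappa(\gamma_0)=2\pi$.

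Finally, the hypothesis $|I^j_\lambda|\geq M/(c\tau_\min)$, combined with property~(ii) of Lemma~\ref{lemma_flipping_pages_time} and the extension~\eqref{extension of that}, gives $T_\lambda:=t(|I^j_\lambda|,z_\lambda)\geq c\tau_\min|I^j_\lambda|\geq M$, so~\eqref{how to use kappagamma} applies at the pair $(\Psi(0,z_\lambda),\pi_\xi(d\Psi(0,z_\lambda)(\zeta_\lambda-\zeta'_\lambda)))$. Integer values $s\in\{1,\dots,|I^j_\lambda|\}$ parametrise exactly the returns of the orbit to $\Psi(\{0\}\times\mathring{\D})$, so their number differs from $|I^j_\lambda|$ by at most one, and combining with the comparison lemma yields
\[
2\pi\,\wind_{s\in[0,|I^j_\lambda|]}\bigl(Dh_s(z_\lambda)(\zeta_\lambda-\zeta'_\lambda)\bigr)
\;\geq\;\bigl(\kappa(\gamma_0)-\epsilon-2\pi\bigr)|I^j_\lambda|\;-\;C',
\]
for a universal constant $C'$; dividing by $2\pi$ gives the claim with $C=C'/(2\pi)$. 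The main obstacle is the comparison lemma itself, specifically the identification of the drift coefficient as exactly $2\pi$ per $s$-cycle: uniform control in $z,u,s_0$ relies on compactness and continuous lifting across pages, while the precise value $2\pi$ reflects a delicate topological identity between the $\D$-framing and the $\sigma$-framing of $\xi|_{S^3\setminus\gamma_0}$ that is enforced by the normalisation of $\Psi$.
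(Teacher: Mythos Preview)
Your overall structure is correct and matches the paper's: apply Theorem~\ref{Florio} to pass to a linearised winding, compare the $\D$-angle to the $\sigma$-angle picking up a drift of $-2\pi$ per $s$-cycle, and then invoke~\eqref{how to use kappagamma}. Your ``comparison lemma'' is precisely what the paper establishes when it splits $\theta=\vartheta+\varTheta$ and bounds the two pieces separately in parts (I) and (II).

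The genuine gap is the one you yourself flag: the identification of the drift as exactly $2\pi$ per cycle. Neither of your proposed justifications suffices. Property~(e) is stated only for the base page $\{0\}\times\D$, and ``propagating it by continuity and the flow'' to every page is not how the paper uses it; instead, property~(e) is invoked once, to bound the winding of $X_1^0$ along a closing arc $\rho\subset\{0\}\times\mathring\D$ (yielding the constant $C_1$). The normalisation $\link(\Psi(\cdot,\alpha),\Psi(\cdot,\beta))=\wind_{[0,1]}(\beta-\alpha)$ fixes the framing of the solid torus $S^3\setminus\gamma_0$, but this alone does not determine how the global contact frame $\sigma$ rotates against the page frame; that is a separate topological datum.

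What actually pins down the drift in the paper is a closed-loop argument: concatenate $s\mapsto(s,h_s(z_\lambda))$ for $s\in[0,|I^j_\lambda|]$ with $\rho$ to obtain a loop $\Gamma$ with $\link(\Psi\circ\Gamma,\gamma_0)=|I^j_\lambda|$; use Brouwer's theorem to produce a fixed point $\bar z$ of $h$ and hence a periodic orbit $\bar\gamma$ with $\link(\bar\gamma,\gamma_0)=1$; homotope $\Gamma$ to $\bar\gamma^{|I^j_\lambda|}$; and finally invoke \cite[Theorem~1.8]{openbook}, which says that in a dynamically convex $(S^3,\lambda)$ every such $\bar\gamma$ has self-linking number $-1$. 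This last fact is the crucial external input that makes $\wind_{s\in[0,1]}(X_1^s(\hat\beta(s)))=-1$ and hence forces the drift to be exactly $-2\pi|I^j_\lambda|$. For the $\vartheta$-part one also needs the metric-comparison inequality \cite[Claim~1.1.1]{FloPHD} to pass from the frame $\{c(s),ic(s)\}$ to the frame $\{X_1^s,X_2^s\}$ with error at most $1$. Your sketch does not supply either of these ingredients, so the comparison lemma remains unproved as stated.
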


\begin{proof}
We start by introducing some notation. Let $\tilde{X}_1,\tilde{X}_2$ be the nowhere vanishing vector fields given by $\sigma \circ \tilde{X}_j\equiv e_j$ where $e_1=(1,0)$, $e_2=(0,1)$. {We choose $\sigma$ so that $\tilde X_1$ is equal to the vector field $Z$ given by (e) in Proposition~\ref{prop_main}.} Observe that $i_{\tilde{X}_j}\lambda\equiv 0$. Denote by $\hat{X}_1,\hat{X}_2$ the pull-back of $\tilde{X}_1|_{S^3\setminus\gamma_0},\tilde{X}_2|_{S^3\setminus\gamma_0}$ respectively under the diffeomorphism $\Psi|_{\R/\Z\times\mathring{\mathbb{D}}}$. For every $s\in\R/\Z$ let $\Pi^s:T(\R/\Z\times\mathbb{D})|_{\{s\}\times\mathbb{D}}\rightarrow T\mathbb{D}$ be the vector bundle map determined by projecting along the direction of $W$, where $\{s\}\times\mathbb{D}$ is identified with $\mathbb{D}$. Then $X_1^s,X_2^s$ denote the images of $\hat{X}_1|_{\{s\}\times\mathring{\mathbb{D}}}$, $\hat{X}_2|_{\{s\}\times\mathring{\mathbb{D}}}$ under $\Pi^s$, respectively. We get two smooth families $X_1^s,X_2^s$ of smooth vector fields on $\mathring{\D}$ parametrized by $s\in\R/\Z$. At times it might be convenient to think of $s$ as variable in $\R$ and the families $X_1^s,X_2^s$ as $1$-periodic in $s$. By Theorem~\ref{Florio}, there exists $z_{\lambda}\in[\zeta'_{\lambda},\zeta_{\lambda}]$ such that
\begin{equation}\label{eq 1}
\text{wind}_{s\in[0,b^j_{\lambda}-a^j_{\lambda}]}(h_s(\zeta_{\lambda})-h_s(\zeta'_{\lambda}))=\text{wind}_{s\in[0,b^j_{\lambda}-a^j_{\lambda}]}(Dh_s(z_{\lambda})(\zeta_{\lambda}-\zeta'_{\lambda})).
\end{equation}
By definition we have
$$
\text{wind}_{s\in[0,b^j_{\lambda}-a^j_{\lambda}]}(Dh_s(z_{\lambda})(\zeta_{\lambda}-\zeta'_{\lambda})) = \dfrac{\theta(b^j_{\lambda}-a^j_{\lambda})-\theta(0)}{2\pi}
$$
where $\theta:[0,b^j_{\lambda}-a^j_{\lambda}]\rightarrow\R$ is a continuous argument of $Dh_s(z_{\lambda})(\zeta_{\lambda}-\zeta'_{\lambda})$. Consider now the smooth path $c:[0,b^j_{\lambda}-a^j_{\lambda}]\rightarrow\C\setminus\{0\}$
$$
s\mapsto c(s):= X^s_1(s,h_s(z_{\lambda}))
$$
and let $\varTheta:[0,b^j_{\lambda}-a^j_{\lambda}]\rightarrow\R$ be a continuous choice of argument of $c(s)$. Thus we can write $\theta(s)=\vartheta(s)+\varTheta(s)$, where $\vartheta:[0,b^j_{\lambda}-a^j_{\lambda}]\rightarrow\R$ is a unique choice of continuous angular coordinate of the vector $Dh_s(z_{\lambda})(\zeta_{\lambda}-\zeta'_{\lambda})$ in the frame $\{c(s),ic(s)\}$ determined by $\theta$ and $\varTheta$. Hence
\begin{equation}\label{eq 2}
\begin{aligned}
\dfrac{\theta(b^j_{\lambda}-a^j_{\lambda})-\theta(0)}{2\pi} &= \dfrac{\varTheta(b^j_{\lambda}-a^j_{\lambda})-\varTheta(0)}{2\pi} +\dfrac{\vartheta(b^j_{\lambda}-a^j_{\lambda})-\vartheta(0)}{2\pi} \\
&= \text{wind}_{s\in[0,b^j_{\lambda}-a^j_{\lambda}]}(c(s))+\dfrac{\vartheta(b^j_{\lambda}-a^j_{\lambda})-\vartheta(0)}{2\pi}
\end{aligned}
\end{equation}

\noindent \textit{(I) Lower bound for $\text{wind}_{s\in[0,b^j_{\lambda}-a^j_{\lambda}]}(c(s))$.} 
Consider a path $\rho:[0,1] \to \mathring{\D}$ of the form $\rho(0) = h_{b^j_{\lambda}-a^j_{\lambda}}(z_{\lambda})$ and $\rho(1) = z_{\lambda}$. 
Build the loop $\Gamma:[0,b^j_{\lambda}-a^j_{\lambda}+1]\rightarrow \R/\Z\times\mathring{\mathbb{D}}$
$$
\Gamma(s):=\begin{cases}
(s,h_s(z_{\lambda})) & s\in[0,b^j_{\lambda}-a^j_{\lambda}] \\
(0,\rho(s-b^j_{\lambda}+a^j_{\lambda})) & s\in[b^j_{\lambda}-a^j_{\lambda},b^j_{\lambda}-a^j_{\lambda}+1]
\end{cases}
$$ 
Observe that $\text{link}(\Psi\circ\Gamma,\gamma_0) = |I^j_{\lambda}| = b^j_{\lambda}-a^j_{\lambda}$. We associate to $\Gamma$ the path $$ D\Gamma:[0,b^j_{\lambda}-a^j_{\lambda}+1] \rightarrow \C\setminus\{0\} $$ defined by 
$$
D\Gamma(s) := 
\begin{cases}
X_1^{s}(\Gamma(s))\quad s\in[0,b^j_{\lambda}-a^j_{\lambda}] \\
X_1^0(\Gamma(s))\quad s\in[b^j_{\lambda}-a^j_{\lambda},b^j_{\lambda}-a^j_{\lambda}+1].
\end{cases}
$$ 
Then it holds that
$$
\text{wind}_{s\in[0,b^j_{\lambda}-a^j_{\lambda}+1]}(D\Gamma(s))=\text{wind}_{s\in[0,b^j_{\lambda}-a^j_{\lambda}]}(c(s))+\text{wind}_{s\in[0,1]}(X_1^0(\rho(s))).
$$
Since $\tilde X_1$ satisfies (e) of Proposition \ref{prop_main} we deduce that there exists a constant $C_1>0$, independent from $p,q,\{T_n\},\{S_n\}$, such that
\begin{equation}\label{first X1s}
| \text{wind}_{s\in[0,1]}(X_1^0(\rho(s))) | \leq C_1.
\end{equation}

Brouwer's translation theorem gives a fixed point $\Psi(0,\bar{z})$ of the first return map to $\Psi(\{0\}\times\mathring{\mathbb{D}})$ corresponding to a periodic Reeb orbit $\bar\gamma \subset S^3\setminus\gamma_0$. We denote by $\bar\gamma^{|I^j_{\lambda}|}$ the $|I^j_{\lambda}|$-fold iteration of $\bar\gamma$. The loop $\beta := \Psi^{-1}(\bar\gamma^{|I^j_{\lambda}|})$ is homotopic to $\Gamma$ in $\R/\Z\times\mathring\D$ since they have the same linking number with $\gamma_0$. The loop $\beta$ can be parametrised by $s\in [0,b^j_{\lambda}-a^j_{\lambda}]$ so that $\beta(s) = (s,\hat\beta(s-\lfloor s\rfloor))$ where $\hat\beta:[0,1] \to \mathring{\D}$ satisfies $\hat\beta(1)=\hat\beta(0)$. Then
		\begin{equation}\label{eq self-link}
		\begin{aligned}
		\text{wind}_{s\in[0,b^j_{\lambda}-a^j_{\lambda}+1]}(D\Gamma(s)) &= \wind_{s\in[0,b^j_{\lambda}-a^j_{\lambda}]}(X_1^{s}(\hat\beta(s-\lfloor s\rfloor))) \\
		&= (b^j_{\lambda}-a^j_{\lambda})\ \text{wind}_{s\in[0,1]}(X_1^s(\hat\beta(s))\,.
		\end{aligned}
		\end{equation}
		The quantity $\wind_{s\in[0,1]}(X_1^s(\hat\beta(s)))$ is equal to the self-linking number of $\bar\gamma$, see Definition~1.5 in~\cite{openbook}. By~\cite[Theorem~1.8]{openbook}, the self-linking number of $\bar\gamma$ is $-1$. Thus, from~\eqref{eq self-link} we get
		\begin{equation}\label{second X1s}
		\text{wind}_{s\in[0,b^j_{\lambda}-a^j_{\lambda}+1]}(D\Gamma(s))=-(b^j_{\lambda}-a^j_{\lambda}).
		\end{equation}
		From \eqref{first X1s} and \eqref{second X1s} we deduce that
		\begin{equation}\label{ineq 2}
		\text{wind}_{s\in[0,b^j_{\lambda}-a^j_{\lambda}]}(c(s)) \geq -(b^j_{\lambda}-a^j_{\lambda})-C_1 = -|I^j_{\lambda}| - C_1
		\end{equation}

		\noindent \textit{(II) Lower bound for $\frac{\vartheta(b^j_{\lambda}-a^j_{\lambda})-\vartheta(0)}{2\pi}$.} With respect to the frame $\{c(s),ic(s)\}$ the angle coordinate of the vector $Dh_s(z_{\lambda})(\zeta_{\lambda}-\zeta'_{\lambda})$ is $\vartheta(s)$. We are interested in the variation of the angle coordinate with respect to the frame
		\begin{equation}\label{frame_on_disk}
		\{c(s) = X_1^{s}(s,h_s(z_{\lambda})),X_2^{s}(s,h_s(z_{\lambda}))\}
		\end{equation}
		obtained from $\sigma$. 
		If we denote by $\tilde{\vartheta}(s)$ the angle coordinate of $Dh_s(z_{\lambda})(\zeta_{\lambda}-\zeta'_{\lambda})$ with respect to the frame~\eqref{frame_on_disk}, we can invoke~\cite[Claim~1.1.1]{FloPHD} to conclude that
\begin{equation}
		\dfrac{\vartheta(b^j_{\lambda}-a^j_{\lambda})-\vartheta(0)}{2\pi} > \dfrac{\tilde{\vartheta}(b^j_{\lambda}-a^j_{\lambda})-\tilde{\vartheta}(0)}{2\pi}-1.
		\end{equation}
		Comparing $\vartheta$ and $\tilde\vartheta$ corresponds to compare oriented angles with respect to different Riemannian metrics.
		Let $v\in\xi_{\Psi(0,z_{\lambda})}$ be uniquely characterised by $$ D\Psi(0,z_\lambda) (0,\zeta_{\lambda}-\zeta'_{\lambda}) \in v + \R X\,. $$ 
		
		From Lemma~\ref{lemma_flipping_pages_time}, the strictly increasing function $ t(\cdot, z_\lambda):[0,b^j_{\lambda}-a^j_{\lambda}]\to[0,+\infty)$ satisfies $$ t(0, z_\lambda)=0 \qquad\qquad \Psi(s,h_s(z_\lambda)) = \phi^{ t(s, z_\lambda)}(\Psi(0,z_\lambda)), $$
		with
		\begin{equation}
		t(s, z_\lambda) = \sum_{k=0}^{\lfloor s \rfloor - 1} t(1,h^k(z_\lambda)) + t(s-\lfloor s \rfloor,h^{\lfloor s \rfloor}(z_\lambda)\,.
		\end{equation}
See also \eqref{extension of that}. The coordinates of the vector $Dh_s(z_{\lambda})(\zeta_{\lambda}-\zeta'_{\lambda})$ in the basis~\eqref{frame_on_disk} are the same of $$ D\phi^{ t(s, z_\lambda)}(\Psi(0,z_{\lambda})) \cdot v $$ in the basis $\{\tilde{X}_1,\tilde{X}_2\}$. Recall 
		the $\R/2\pi\Z$ coordinate $\Theta_\sigma$ in $(\xi\setminus0)/\R_+$ induced by the frame $\sigma$.  If $p\in S^3$, $w\in \xi_p\setminus 0$, we denote by $t \mapsto \tilde\Theta_\sigma(t,p,w)$ a continuous lift to~$\R$ of $t\mapsto \Theta_\sigma(D\phi^t(p)w)$. Thus
		\begin{equation}
		\begin{aligned}
		& \tilde{\vartheta}(b^j_{\lambda}-a^j_{\lambda})-\tilde{\vartheta}(0) \\
		& \qquad = \tilde{\Theta}_\sigma(t(b_\lambda^j-a_\lambda^j, z_\lambda), \Psi(0,z_\lambda),v)-\tilde{\Theta}_\sigma(0, \Psi(0,z_\lambda),v) \\
		& \qquad =:\Delta\Theta_\sigma(t(b_\lambda^j-a_\lambda^j, z_\lambda), \Psi(0,z_\lambda),v).
		\end{aligned}
		\end{equation}
		\noindent Since, by assumption, $b_\lambda^j-a_\lambda^j\geq \frac{M}{c\tau_\min}$, then $t(b^j_\lambda-a^j_\lambda, z_\lambda)\geq M$ and, from \eqref{how to use kappagamma}, it holds that
		\begin{equation}
		\begin{aligned}
		& \Delta\Theta_\sigma(t(b_\lambda^j-a_\lambda^j, z_\lambda), \Psi(0,z_\lambda),v) \\
		&>(\kappa(\gamma_0)-\varepsilon)\ \#\{t\in[0,t(b_\lambda^j-a_\lambda^j, z_\lambda)] \ \vert\  \phi^t(\Psi(0,z_\lambda))\in \Psi(\{0\}\times\mathring{\mathbb{D}})\} \\ &=(\kappa(\gamma_0)-\varepsilon)\,(|I_\lambda^j|+1).
		\end{aligned}
		\end{equation}
		Consequently
		\begin{equation}\label{ineq 3}
		\dfrac{\vartheta(b^j_{\lambda}-a^j_{\lambda})-\vartheta(0)}{2\pi}>\dfrac{\left( \kappa(\gamma_0)-\varepsilon\right) \, \left(|I^j_{\lambda}|+1\right)}{2\pi}-1
		\end{equation}
		
		With the help of (I) and (II) we can conclude the proof, since from \eqref{eq 1}, \eqref{eq 2}, \eqref{ineq 2}, \eqref{ineq 3} we have 
		$$
		\text{wind}_{s\in[0,b^j_{\lambda}-a^j_{\lambda}]}(h_s(\zeta_{\lambda})-h_s(\zeta'_{\lambda})) >\Big(\kappa(\gamma_0)-\varepsilon-2\pi\Big)\ \dfrac{|I^j_\lambda|}{2\pi}-C,
		$$
		where $C:=C_1+1-\frac{(\kappa(\gamma_0)-\varepsilon)}{2\pi}$.
	\end{proof}

We now give the final estimate to complete the proof of Proposition~\ref{prop_main_main}.
For every $j=0,\dots,L-1$ denote $$ \mathscr{G}_j=\left\{\lambda\in\{1,\dots ,N_j\} \ \vert\  |I_\lambda^j| \geq \frac{M}{c\tau_\min}\right\}, $$
$$ \mathscr{B}_j=\left\{\lambda\in\{1,\dots ,N_j\} \ \vert\  |I_\lambda^j|< \frac{M}{c\tau_\min}\right\} \, . $$
With Lemma~\ref{lemma_main_winding_estimate} and Remark~\ref{rmk 1} we can estimate~\eqref{broken_winding} as (see \eqref{eq to explain Njlambda})
\begin{equation*}
\sum_{i,j=0}^{L-1} \wind_{s\in[0,1]}(k^p_i(s)-k^q_j(s)) = 
\end{equation*}
\begin{equation*}
=\sum_{j=0}^{L-1}\left( \sum_{\lambda\in\mathscr{G}_j}\wind_{s\in [0,b^j_\lambda-a^j_\lambda]}(h_s(\zeta_\lambda)-h_s(\zeta_\lambda'))+\sum_{\lambda\in\mathscr{B}_j}\wind_{s\in [0,b^j_\lambda-a^j_\lambda]}(h_s(\zeta_\lambda)-h_s(\zeta_\lambda')) \right)
\end{equation*}
\begin{equation*}
> \sum_{j=0}^{L-1} \left( \sum_{\lambda\in\mathscr{G}_j}\left[ \Big(\kappa(\gamma_0)-\varepsilon-2\pi\Big)\dfrac{|I_\lambda^j|}{2\pi}-C \right]-\sum_{\lambda\in\mathscr{B}_j}\mathscr{M}\right) 
\end{equation*}
and, denoting $\tilde{C}:=\max(C,\mathscr{M})$, we have
\begin{equation*}
\sum_{i,j=0}^{L-1} \wind_{s\in[0,1]}(k^p_i(s)-k^q_j(s))\geq\sum_{j=0}^{L-1}\left( \dfrac{\kappa(\gamma_0)-\varepsilon-2\pi}{2\pi}\sum_{\lambda\in\mathscr{G}_j}|I_\lambda^j|-\sum_{\lambda=1}^{N_j}\tilde{C} \right)
\end{equation*}
\begin{equation*}
\geq\sum_{j=0}^{L-1}\left( \dfrac{\kappa(\gamma_0)-\varepsilon-2\pi}{2\pi}\sum_{\lambda\in\mathscr{G}_j}|I_\lambda^j| \right)-\tilde{C}L(r_n(p)+r_n(q))\,.
\end{equation*}
For every $\lambda\in\mathscr{B}_j$ recall that $|I_\lambda^j|<\frac{M}{c\tau_\min}$. Observe now that
	\begin{equation*}
	\begin{aligned}
	0\leq \sum_{\lambda\in\mathscr{B}_j}|I_\lambda^j|<\dfrac{M}{c\tau_\min}N^j\leq \dfrac{M}{c\tau_\min}(r_n(p)+r_n(q))=L\dfrac{M}{c\tau_\min}\,\dfrac{m_n(p)+m_n(q)}{m_n(q)m_n(p)}.
	\end{aligned}
	\end{equation*}
	\noindent Moreover, it holds that $\sum_{\lambda\in\mathscr{G}_j}|I_{\lambda}^j|+\sum_{\lambda\in\mathscr{B}_j}|I_\lambda^j|=L$. Consequently
	\begin{equation}
	\sum_{\lambda\in\mathscr{G}_j}|I_\lambda^j|>\left( 1-\dfrac{M}{c\tau_\min}\dfrac{m_n(p)+m_n(q)}{m_n(q)m_n(p)} \right)L.
	\end{equation}
	\noindent Therefore, we have
	$$
	\begin{aligned}
	& \sum_{i,j=0}^{L-1}\wind_{s\in[0,1]}(k^p_i(s)-k^q_j(s)) \\
	& > \dfrac{\kappa(\gamma_0)-\varepsilon-2\pi}{2\pi}\left( 1-\dfrac{M}{c\tau_\min}\dfrac{m_n(p)+m_n(q)}{m_n(q)m_n(p)} \right)L^2-\tilde{C}L(r_n(p)+r_n(q)),
	\end{aligned}
	$$
	which together with~\eqref{broken_winding_pre} yields
	\begin{equation*}
	\begin{aligned}
	& \link(k(T_n,p)^{r_n(p)}, k(S_n,q)^{r_n(q)}) \\
	& > \frac{L^2}{2\pi} \left( \kappa(\gamma_0)-\varepsilon-2\pi \right)\left( 1-\dfrac{M}{c\tau_\min}\dfrac{m_n(p)+m_n(q)}{m_n(q)m_n(p)} \right) - { L(1+\tilde{C}) (r_n(p)+r_n(q))} \, .
	\end{aligned}
	\end{equation*}
		Combining with~\eqref{estimates_closing_times_p}-\eqref{estimates_closing_times_q} we get
		\begin{equation}\label{crucial_ineq_rotation_numbers}
		\begin{aligned}
		& \frac{\link(k(T_n,p), k(S_n,q))}{T_nS_n} = \frac{\link(k(T_n,p)^{r_n(p)}, k(S_n,q)^{r_n(q)})}{r_n(p)T_n \ r_n(q)S_n} \\
		&>\dfrac{(\kappa(\gamma_0)-\varepsilon-2\pi)}{2\pi}\,\dfrac{m_n(p)m_n(q)}{T_nS_n} \\
		& \qquad -\dfrac{M}{c\tau_\min}\,\dfrac{(\kappa(\gamma_0)-\varepsilon-2\pi)}{2\pi}\,\dfrac{m_n(p)+m_n(q)}{T_nS_n} \\
		& \qquad -{L(1+\tilde C)}\dfrac{r_n(p)+r_n(q)}{L^2}\,\dfrac{m_n(q)m_n(p)}{T_nS_n}\\
		&\geq \dfrac{(\kappa(\gamma_0)-\varepsilon-2\pi)}{2\pi\tau_\max^2}\,\dfrac{m_n(p)m_n(q)}{(m_n(p)+1)(m_n(q)+1)} \\ 
		& \qquad -\dfrac{M}{c\tau_\min}\dfrac{(\kappa(\gamma_0)-\varepsilon-2\pi)}{2\pi\tau_\min^2}\,\Big( \dfrac{1}{m_n(q)}+\dfrac{1}{m_n(p)} \Big) \\
		& \qquad -\dfrac{{L(1+\tilde C)}}{\tau_\min^2}\,\dfrac{1}{m_n(p)m_n(q)}\Big( \dfrac{1}{r_n(q)}+\dfrac{1}{r_n(p)} \Big)
\end{aligned}
\end{equation}
from where it follows that
\begin{equation}
\liminf_{n\to\infty} \ \frac{\link(k(T_n,p), k(S_n,q))}{T_nS_n} \geq \frac{\kappa(\gamma_0)-\varepsilon-2\pi}{2\pi\tau_\max^2} > 0
\end{equation}
Here we made use of~\eqref{choice_of_eps}. 
The proof of Proposition~\ref{prop_main_main} is complete.

{ The argument above has a simple consequence.

\begin{lemma}\label{lemma_kappa_Seifert_rotation_numbers}
If $\kappa(\gamma_0)>2\pi$ then every periodic orbit has strictly positive transverse rotation number in a Seifert framing.
\end{lemma}

\begin{proof}
We draw freely from the notation established in the proof of Proposition~\ref{prop_main_main}.
In~\eqref{crucial_ineq_rotation_numbers} consider here the case where the point $p \in \Psi(\{0\}\times\D)$ belongs to a periodic orbit $\gamma \subset S^3 \setminus \gamma_0$ with primitive period $T$.
Let $q$ be a recurrent point in $\Psi(\{0\}\times\D)\setminus\{p\}$. 
If $q$ is closed enough to $p$ then $q \not\in\gamma$.
It follows from~\eqref{crucial_ineq_rotation_numbers} that if $T_n = nT$ and $S_n$ are larger than some constant $\bar T>0$, which is fixed large enough independently of $q$, then $\link(\gamma^n,k(S_n,q)) \geq nTS_n(\kappa(\gamma_0)-\varepsilon-2\pi)/(2\pi\tau_{\max}^2)$.
The existence of~$\bar T$ relies crucially on the fact that the constants $M,c,\tilde C,C,\mathscr{M}$ appearing in the previous proof do not depend on~$p$ and~$q$.
Dividing by $n$ we get $$ \link(\gamma,k(S_n,q)) \geq TS_n \frac{\kappa(\gamma_0)-\varepsilon-2\pi}{2\pi\tau_{\max}^2} > 0 \, . $$
Letting $q$ converge to $p$ arbitrarily along recurrent points in $\Psi(\{0\}\times\D)$, and comparing the trajectory $\phi^{[0,S_n]}(q)$ with the linearised flow along $p$ over the {\bf fixed} interval $[0,S_n]$ independent of $q$, we conclude that the transverse rotation number of $\gamma$ on a Seifert framing is positive.
\end{proof}

}

\bigskip

By Proposition~\ref{prop_main_main} and Lemma~\ref{lemma_kappa_Seifert_rotation_numbers} the assumption $\kappa(\gamma_0) > 2\pi$ implies right-handedness of the Reeb flow. The proof of Theorem~\ref{thm_main_0} is complete.

\bigskip

\subsection{Right-handedness on strictly convex energy levels}

Let the contact form $\lambda$ on $S^3$ be dynamically convex. The contact structure and the Reeb flow are $\xi$ and $\phi^t$, respectively. We begin with studying criteria to estimate the invariant $\kappa$ in~\eqref{def_number_kappa}. Consider
\begin{equation}
K_\sigma:=\inf_{\PP_+\xi} \, i_{ X_\PP}d\Theta_\sigma\,
\end{equation}
where $\sigma$ is the global frame, $\Theta_\sigma : \PP_+\xi \to \R/2\pi\Z$ is the global angle coordinate induced by $\sigma$ as in~\eqref{angular_coord_sigma}, and ${X_\PP}$ is the vector field generating the linearised Reeb flow on $\PP_+\xi$.
Let $\gamma_0$ be an unknotted periodic orbit with self-linking number $-1$, and let $D\subset S^3$ be the disk-like global surface of section given by Theorem 1.7 in \cite{openbook}. Denote $0<\tau_\min(D)\leq\tau_\max(D)<+\infty$ the infimum and the supremum of the return time function on $D$.

\begin{theorem}
\label{cor Ksigma}
If $K_\sigma \, \tau_\min(D)>2\pi$ then the Reeb flow of $\lambda$ is right-handed.
\end{theorem}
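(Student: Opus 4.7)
The plan is to reduce to Theorem~\ref{thm_main_0} by showing that the hypothesis $K_\sigma \tau_\min(D) > 2\pi$ directly forces $\kappa(\gamma_0) > 2\pi$. A preliminary point is that the disk $D$ supplied by Theorem~\ref{thm_JSG} is only $C^1$ up to the boundary, whereas the definition of $\kappa(\gamma_0)$ uses a $\partial$-strong global surface of section; by Remark~\ref{rmk_JSG_strong} one may $C^1$-perturb $D$ into a smooth $\partial$-strong disk-like GSS, and the return time function depends $C^0$-continuously on such a perturbation, so that $\tau_\min$ remains arbitrarily close to $\tau_\min(D)$ and the hypothesis $K_\sigma \tau_\min > 2\pi$ persists. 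Since $\kappa(\gamma_0)$ is independent of the choice of $\partial$-strong GSS (Corollary~\ref{independance delta strong}), I shall simply assume $D$ itself is $\partial$-strong.

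To bound the numerator in~\eqref{def_number_kappa} from below, I would use that $t\mapsto \widetilde\Theta_\sigma(t,u)$ is a continuous lift of $t\mapsto \Theta_\sigma(D\phi^t\cdot u)$, hence a smooth function whose derivative at time $t$ equals $(i_{\tilde X}d\Theta_\sigma)|_{D\phi^t\cdot u}$. By the very definition of $K_\sigma$ this derivative is pointwise at least $K_\sigma$, so integrating yields
\begin{equation*}
\widetilde\Theta_\sigma(T,u) - \widetilde\Theta_\sigma(0,u) \ \geq \ K_\sigma\, T
\end{equation*}
uniformly in $u$. Note that the hypothesis forces $K_\sigma > 0$.

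For the denominator I would invoke the identity recorded just before Lemma~\ref{lemma_main_winding_estimate}, namely
\begin{equation*}
\link(k(T,x;D),\gamma_0) \ = \ \#\bigl\{\, t\in[0,T] \ \big|\ \phi^t(x)\in \Psi(\{0\}\times\mathring{\D})\,\bigr\} \ + \ \ell
\end{equation*}
with $\ell \in \{-1,0,1\}$, combined with the two-sided return-time estimate $T/\tau_\max \leq \#\{\cdots\} \leq T/\tau_\min$. This gives, for every $T$ large and every $x \in S^3\setminus\gamma_0$,
\begin{equation*}
0 \ < \ \link(k(T,x;D),\gamma_0) \ \leq \ \frac{T}{\tau_\min(D)} + 1 \, .
\end{equation*}

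Dividing the two estimates, for every $T$ sufficiently large and every admissible pair $(x,u)$ one has
\begin{equation*}
\frac{\widetilde\Theta_\sigma(T,u)-\widetilde\Theta_\sigma(0,u)}{\link(k(T,x;D),\gamma_0)} \ \geq \ \frac{K_\sigma\, T}{T/\tau_\min(D) + 1} \, ,
\end{equation*}
a lower bound which is uniform in $(x,u)$ and converges to $K_\sigma\tau_\min(D)$ as $T\to\infty$. Taking the infimum over $(x,u)$ and then the liminf in $T$ yields $\kappa(\gamma_0) \geq K_\sigma\tau_\min(D) > 2\pi$, and Theorem~\ref{thm_main_0} closes the argument. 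I do not foresee a genuine obstacle: the passage from the infinitesimal rotation rate $K_\sigma$ to an asymptotic angular variation and the conversion between return counts and linking numbers are both elementary, and the only technical point is the reduction to a $\partial$-strong GSS, which is handled by Remark~\ref{rmk_JSG_strong}.
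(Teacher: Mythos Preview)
Your proposal is correct and follows essentially the same route as the paper: bound the numerator below by $K_\sigma T$ via the fundamental theorem of calculus, bound the denominator above by a quantity asymptotic to $T/\tau_{\min}(D)$ via the return-time estimate, and conclude $\kappa(\gamma_0)\geq K_\sigma\tau_{\min}(D)>2\pi$ so that Theorem~\ref{thm_main_0} applies. The only cosmetic difference is that the paper counts hits over the full interval $I(T,x;D)=[t_-^D(x),T+t_+^D(\phi^T(x))]$ and obtains the bound $(T+2\tau_{\max}(D))/\tau_{\min}(D)$, whereas you count over $[0,T]$ with an additive $\ell\in\{-1,0,1\}$; both expressions have the same limit $K_\sigma\tau_{\min}(D)$ as $T\to\infty$.
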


\begin{proof}
We show that the hypothesis implies $\kappa(\gamma_0)>2\pi$ and then apply Theorem~\ref{thm_main_0}. 
By the fundamental theorem of calculus, for every $x \in S^3 \setminus \gamma_0$ and $u \in \xi|_x$, $u\neq0$, we have for $T>0$
\begin{equation}
\label{cor eq 1}
\begin{aligned}
\tilde{\Theta}_\sigma(T,u) - \tilde\Theta_\sigma(0,u) &= \int_0^T\dfrac{d}{dt}\Theta_\sigma(\R_+D\phi^t(x) \, u)dt \\ 
&\geq \ T \inf_{\substack{x\in S^3\setminus\gamma_0 \\ u\in \xi_x\setminus 0}}i_{ X_\PP}d\Theta_\sigma(x,u)=TK_\sigma\, .
\end{aligned}
\end{equation}
Moreover
\begin{equation}
\label{cor eq 2}
\text{link}(k(T,x;D),\gamma_0)=  
\#\{ t\in[t_-^D(x),T+t^D_+(\phi^T(x))] \mid  \phi^t(x)\in D \}-1 \,.
\end{equation}
Observe that
\begin{equation}
\label{cor eq 3}
\#\{ t\in[t_-^D(x),T+t^D_+(\phi^T(x))] \mid \phi^t(x)\in D \}-1\leq \dfrac{T+t_+^D(\phi^T(x))-t_-^D(x)}{\tau_\min(D)}\, .
\end{equation}
Hence $$ \dfrac{ \tilde{\Theta}_\sigma(T,u)-\tilde\Theta_\sigma(0,u)}{\text{link}(k(T,x;D),\gamma_0)} \geq \dfrac{TK_\sigma }{\frac{T+t_+^D(\phi^T(x))-t_-^D(x)}{\tau_\min(D)}} = K_\sigma \, \tau_{\min}(D) \, \frac{T}{T+t_+^D(\phi^T(x))-t_-^D(x)} \, . $$ Since $0<t_+^D(\phi^T(x))-t_-^D(x)\leq 2\tau_{\max}(D)$ for every $x \in S^3 \setminus \gamma_0$, we finally get $$ \dfrac{ \tilde{\Theta}_\sigma(T,u)-\tilde\Theta_\sigma(0,u)}{\text{link}(k(T,x;D),\gamma_0)} \geq K_\sigma \, \tau_{\min}(D) \, \frac{T}{T+2\tau_{\max}(D)} \, . $$ It follows immediately by first taking the infimum on $x,u$ and then taking the limit as $T\to+\infty$ that $\kappa(\gamma_0) \geq K_\sigma \, \tau_{\min}(D)>2\pi$. 
\end{proof}

Our final task in this section is to prove Theorem~\ref{thm_main_1}. Let $J_0:\R^4\rightarrow\R^4$ be the complex structure defined by the matrix 
\begin{equation}\label{def J0}
J_0=\begin{pmatrix}
0 & 0 & -1 & 0 \\
0 & 0 & 0 & -1 \\
1 & 0 & 0 & 0 \\
0 & 1 & 0 & 0
\end{pmatrix}
\end{equation}
with respect to coordinates $(q_1,q_2,p_1,p_2)$ of $\R^4$. Then $\omega_0(u,v)=\langle u,J_0v\rangle$ holds for every $u,v\in\R^4$, where $\langle\cdot,\cdot\rangle$ is the standard Euclidean inner product. We will denote as $\|\cdot\|$ the inherited norm. As explained in the introduction, the Hamiltonian vector field of $H=\nu^2_C$, defined as $i_{X_H}\omega_0=-dH$,  is the Reeb vector field of the contact form on $\partial C$ induced by $\lambda_0$. The associated contact structure is denoted by $\xi \subset T\partial C$. Denote as $\varphi_H^t$ the Hamiltonian flow on $\partial C$, and ${ X_\PP}$ the vector field on $\PP_+\xi$ generating the linearised flow. Together with the matrix $J_0$, consider the following
$$
J_1=\begin{pmatrix}
0 & -1 & 0 & 0 \\1 & 0 & 0 & 0 \\ 0 & 0 & 0 &1 \\ 0 & 0 &-1 &0
\end{pmatrix}\qquad
J_2=\begin{pmatrix}
0 & 0 & 0 & -1 \\ 0 & 0 & 1 & 0 \\ 0 & -1 & 0 & 0 \\ 1 & 0 & 0 & 0
\end{pmatrix}.
$$
Then $J_i^2=-I$ and $J_i^T=-J_i$ hold for all $i=0,1,2$ where $I$ denotes the identity matrix. Moreover, $J_0J_1=J_2$, $J_1J_2=J_0$ and $J_2J_0=J_1$. On $\partial C$ consider the following vector fields 
\begin{equation}\label{def Salomao}
X_0=\dfrac{\nabla H}{\Vert\nabla H\Vert},\quad X_1=J_2X_0,\quad X_2=J_1X_0,\quad X_3=-J_0X_0.
\end{equation}
\noindent The frame $\{X_1,X_2,X_3\}$ is an orthonormal global frame of $T\partial C$. Observe that the vector field $X_3$ is positively collinear with $X_H$ and so it is tangent to the trajectories of the Reeb flow on $\partial C$. Let $\sigma$ the global symplectic trivialisation of $(\xi,d\lambda_0)$ determined by projecting $X_1,X_2$ onto $\xi$ along the Reeb direction $\R X_3$. As explained in the introduction, it induces a trivialisation $\PP_+\xi \simeq \partial C\times \R/2\pi\Z$ of circle bundles. Denote by $\Theta_\sigma$ the $\R/2\pi\Z$-component of this map. 
{Theorem~\ref{thm_main_1} follows immediately from Theorem~\ref{cor Ksigma} and the following statement.}

\begin{lemma}[\cite{ragazzo-salomao}]
\label{Salomao}
The inequality $i_{ X_\PP}d\Theta_\sigma \geq 2K^C_\min$ holds everywhere on $\PP_+\xi$.
\end{lemma}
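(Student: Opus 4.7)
The plan is to reduce the claim to a pointwise linear-algebra estimate on the linearised Hamiltonian flow in the global frame $\sigma$. Fix $x \in \partial C$ and a non-zero $u \in \xi_x$, and decompose $u = \alpha X_1(x) + \beta X_2(x) + \gamma X_3(x)$ in the orthonormal frame $\{X_1(x), X_2(x), X_3(x)\}$ of $T\partial C$. Since the projection $T\partial C \to T\partial C/\R X_H \cong \xi$ sends $X_3$ to zero and maps $X_1, X_2$ to a symplectic basis of $\xi$ (one computes $\omega_0(X_1,X_2)=\langle J_2X_0,J_0J_1X_0\rangle=\langle J_2X_0,J_2X_0\rangle=1$), we have $\Theta_\sigma([u]) = \arg(\alpha + i\beta)$. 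Under the Hamiltonian flow, $u(t) = D\varphi^t_H(x)\,u$ satisfies the linearised equation $\dot u = -J_0\, D^2H(x(t))\,u$ in $T\R^4$, and $u(t) \in \xi_{x(t)}$ because the Reeb flow preserves $\lambda_0$.

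I would then expand $u(t) = \alpha(t) X_1(x(t)) + \beta(t) X_2(x(t)) + \gamma(t) X_3(x(t))$ and extract $\dot\alpha,\dot\beta$ by differentiating this identity and projecting the equation $\dot u = -J_0 D^2H\, u$ onto the moving frame. The frame derivatives $\dot X_i = (DX_i)\cdot X_H$ can be computed explicitly using $X_j = J_{j'} X_0$ with $X_0 = \nabla H/\|\nabla H\|$, which brings derivatives of $\nabla H$ into play and therefore produces terms involving $D^2H$. The angular velocity is
\begin{equation*}
i_{\tilde X}\, d\Theta_\sigma(x,[u]) \ = \ \frac{\alpha \dot\beta - \beta \dot\alpha}{\alpha^2 + \beta^2}\bigg|_{t=0},
\end{equation*}
and after algebraic manipulation using the quaternionic identities $J_0 J_1 = J_2$, $J_1 J_2 = J_0$, $J_2 J_0 = J_1$ together with Euler's identity $D^2H(x)\cdot x = \nabla H(x)$ (a consequence of the $2$-homogeneity of $H=\nu_C^2$), the right-hand side should simplify to a symmetric quadratic form in $(\alpha,\beta)$ whose coefficients are controlled by matrix elements of $D^2H(x)$ in the frame $\{X_1(x),X_2(x)\}$, weighted by factors involving $\|\nabla H(x)\|$.

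The lower bound then follows from strict convexity. By definition $D^2H(z) \geq K^C_{\min}\, I$ as a quadratic form on all of $\R^4\setminus\{0\}$; combining this with the weighting by $\|\nabla H\|$ and with Euler's identity (which feeds the normal direction $X_0 \parallel x$ back as an extra contribution with eigenvalue bounded in terms of $K^C_{\min}$) produces the factor $2$, giving $i_{\tilde X}\, d\Theta_\sigma \geq 2K^C_{\min}$. A sanity check on the constants: for the Euclidean unit ball, $H=\|x\|^2$, $D^2 H\equiv 2I$, hence $K^C_{\min}=2$; the flow is the Hopf flow with $D\varphi^t=e^{-2tJ_0}$, and the anticommutation $J_0J_2=-J_2J_0$ yields $D\varphi^t X_1(x)=\cos(4t)\,X_1(\varphi^t x)+\sin(4t)\,X_2(\varphi^t x)$, so that $i_{\tilde X}\, d\Theta_\sigma\equiv 4 = 2K^C_{\min}$ and the inequality is sharp.

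The main obstacle is the middle step: the frame $\{X_1,X_2,X_3\}$ is not parallel, so there are genuine cross terms between the ambient Hessian $D^2 H$ coming from the linearised equation and the covariant derivatives $\nabla_{X_H} X_i$ coming from the moving frame, and the cancellations that produce the sharp factor of $2$ rely crucially on the quaternionic compatibilities between $J_0,J_1,J_2$ together with the normalisation $X_i = J_{i'}X_0$, $X_0 = \nabla H/\|\nabla H\|$. Organising this bookkeeping so that the final expression is manifestly bounded below by $2K^C_{\min}\,(\alpha^2+\beta^2)$ is precisely the content of the computation carried out in \cite{ragazzo-salomao}, and it is what I would reproduce here in detail.
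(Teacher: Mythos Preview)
Your outline is essentially the paper's approach: decompose $v(t)$ in the moving orthonormal frame $\{X_1,X_2,X_3\}$, extract $\dot\alpha_1,\dot\alpha_2$ by differentiating both the linearised equation $\dot v=-J_0D^2H\,v$ and the frame decomposition, and recognise $i_{\tilde X}d\Theta_\sigma = (\alpha_1\dot\alpha_2-\alpha_2\dot\alpha_1)/(\alpha_1^2+\alpha_2^2)$ as a Rayleigh quotient for a symmetric $2\times 2$ matrix built from $D^2H$.

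However, your explanation of where the factor $2$ comes from is off in two respects. First, $X_0=\nabla H/\|\nabla H\|$ is \emph{not} parallel to the position vector $x$ unless $C$ is a Euclidean ball, so Euler's identity $D^2H(x)\cdot x=\nabla H(x)$ does not enter the computation. Second, the $\|\nabla H\|$ factors cancel completely: $DX_0$ acts on $T_x\partial C$ as $D^2H/\|\nabla H\|$, while $X_H=\|\nabla H\|\,X_3$. The actual mechanism is cleaner than you suggest. After the quaternionic bookkeeping (using $\langle DX_i\,X_H,X_j\rangle = -\langle D^2H\,X_3, AB\,X_0\rangle$ for appropriate $A,B\in\{-J_0,J_1,J_2\}$), one finds
\[
\begin{pmatrix}\dot\alpha_2\\-\dot\alpha_1\end{pmatrix}=M(t)\begin{pmatrix}\alpha_1\\\alpha_2\end{pmatrix},\qquad M(t)=\begin{pmatrix}\langle D^2H\,X_1,X_1\rangle & \langle D^2H\,X_1,X_2\rangle\\ \langle D^2H\,X_2,X_1\rangle & \langle D^2H\,X_2,X_2\rangle\end{pmatrix}+\langle D^2H\,X_3,X_3\rangle\,I.
\]
The factor $2$ comes simply from the \emph{sum} of two separate terms, each bounded below by $K^C_{\min}$: the restriction of $D^2H$ to $\mathrm{span}\{X_1,X_2\}$, and the scalar $\langle D^2H\,X_3,X_3\rangle$ multiplying the identity. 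No Euler identity, no normal-direction feedback. Your sanity check on the ball is correct and confirms sharpness.
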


\section{Pinched two-spheres}\label{sec_proof_pinched}

The aim of this section is to prove Theorem~\ref{thm_pinched}. The strategy is, first, to lift the geodesic flow on the unit tangent bundle of a Riemannian two-sphere $(S^2,g)$ to the Reeb flow of a contact form $f_g\lambda_0$ on $S^3$ for some smooth $f_g:S^3\to(0,+\infty)$, where $\lambda_0$ is the restriction to $S^3\subset \C^2$ of the standard Liouville form $\frac{1}{4i}(\bar zdz-zd\bar z+\bar wdw-wd\bar w)$. Here $(z,w)$ denote the complex coordinates on $\C^2$. The second  step is to use the pinching condition on the curvatures to show that the assumptions of Theorem~\ref{thm_main_0} are fulfilled.

\subsection{Lifting geodesic flows on $S^2$ and Birkhoff annuli}

Here we establish notation, and recall well-known facts about geodesic flows. Let $(S^2,g)$ be an oriented Riemannian two-sphere. The foot-point projection of its tangent bundle is denoted by $\pi:TS^2\to S^2$. Consider the unit tangent bundle $T^1_gS^2$ defined as the set of $(x,v) \in TS^2$ such that $g(x)(v,v)=1$.

There are  two important vector bundle maps covering $\pi$, namely, the differential of the projection $d\pi : TTS^2 \to TS^2$ and the connection map $K : TTS^2 \to TS^2$. 
 There is a splitting
\begin{equation*}
TTS^2 = \mathcal{H} \oplus \mathcal{V} \qquad\qquad \mathcal{H} = \ker K, \ \mathcal{V} = \ker d\pi
\end{equation*}
where both $\mathcal{H}, \mathcal{V}$ are fiberwise two-dimensional. {For all $v,w\in T_xS^2$, the horizontal lift $w^\hor\in \mathcal{H}_{(x,v)}$ and the vertical lift $w^\ver\in \mathcal{V}_{(x,v)}$ are both well-defined: we refer to \cite{HP,finsler} for further details.} 
The so-called Hilbert form $\sum_{ij}g_{ij}(x)v^idx^j$ restricts to a contact form $\lambda_g$ on $T^1_gS^2$ whose Reeb flow coincides with the geodesic flow. For every $(x,v) \in T^1_gS^2$ denote by $v^\bot \in T_xS^2$ the unique vector such that $g(x)(v^\bot,v^\bot)=1$, $g(x)(v^\bot,v)=0$, and $\{v,v^\bot\}$ is a positive basis. It follows that $\{v^\hor,(v^\bot)^\ver,(v^\bot)^\hor\}$ defines a basis of $T_{(x,v)}T^1_gS^2$, where $v^\hor$ is the Reeb vector field at $v$ and $\{(v^\bot)^\ver,(v^\bot)^\hor\}$ is a $d\lambda_g$-symplectic frame of the contact structure $\ker \lambda_g$. In fact, we see from this that $\ker\lambda_g$ is trivial as a (symplectic) vector bundle since it admits a global frame
\begin{equation}\label{global_geodesic_frame}
\sigma_g : v \mapsto \{(v^\bot)^\ver,(v^\bot)^\hor\}
\end{equation}
which we call the \textit{geodesic frame}. {If $h$ is another Riemannian metric on $S^2$, then there exists a contactomorphism between $T^1_gS^2$ and $T^1_hS^2$.}
Denote by $g_0$ the round metric on $S^2$. 
There is a double covering map
\begin{equation}
D_0:S^3\to T^1_{g_0}S^2 
\end{equation}
{ such that, following~\cite{HP}, it holds }
\begin{equation}\label{factor_4}
D_0^*\lambda_{g_0} = 4 \lambda_0
\end{equation}
where $\lambda_0$ denotes the restriction to $S^3$ of the $1$-form 
\begin{equation}
\frac{1}{2} ( xdy-ydx + udv-vdu ).
\end{equation}
In other words, the geodesic flow of $g_0$ lifts to the Hopf Reeb flow on $S^3$ up to a constant time reparametrisation; to explain the factor $4$ note that a Hopf fibre has Reeb flow period $\pi$ with respect to $\lambda_0$, and is the lift of a great circle of length $2\pi$ prescribed twice. Given any other metric $g$ on $S^2$, using the contactomorphism between $T^1_{g_0}S^2$ and $T^1_gS^2$, we get a double covering map
\begin{equation}\label{covering_D_g}
D_g 
: S^3 \to T^1_gS^2
\end{equation}
respecting contact structures, i.e.
\begin{equation}
D_g^*\lambda_g = f_g\lambda_0
\end{equation}
for some smooth $f_g:S^3 \to \R\setminus\{0\}$. The covering~\eqref{covering_D_g} is the universal covering and the group of deck transformations is $\Z_2$ generated by the antipodal map. If $g=g_0$ then  $f_{g_0}\equiv 4$ by~\eqref{factor_4}. The following special case of a result from~\cite{HP} relates Gaussian curvature to dynamical convexity; see also~\cite{finsler} for an alternative proof.

\begin{theorem}[Harris and Paternain]
If $(S^2,g)$ is $\delta$-pinched with $\delta>1/4$, then the Reeb flow of $D_g^*\lambda_g$ is dynamically convex.
\end{theorem}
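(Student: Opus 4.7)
The plan is to verify the two conditions of dynamical convexity for $\lambda := D_g^*\lambda_g = f_g\lambda_0$ on $S^3$. The first, $c_1(\xi,d\lambda)|_{\pi_2(S^3)} = 0$, is automatic since $\pi_2(S^3) = 0$. For the second, because $\pi_1(S^3) = 0$ every periodic Reeb orbit is contractible, and by the iteration formula for $\mu_{\rm CZ}$ it suffices to prove $\rho(\tilde\gamma) > 1$ in a global symplectic trivialisation of $\xi$ for every primitive closed Reeb orbit $\tilde\gamma \subset S^3$. After rescaling the metric so that $\max K = 1$ (which does not affect dynamical convexity), the pinching reads $\delta \leq K \leq 1$.

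The key input is a uniform lower bound $\tilde L \geq 4\pi$ on the primitive period of every such $\tilde\gamma$. Via the double cover $D_g: S^3 \to T^1_gS^2$, the orbit $\tilde\gamma$ projects to a closed geodesic $\gamma$ of length $L$, and $\tilde L = L$ or $\tilde L = 2L$ according to whether the tangent loop $\gamma^{(1)} \subset T^1_gS^2 \simeq \mathbb{R}P^3$ is null-homotopic or represents the non-trivial class in $\pi_1 = \mathbb{Z}/2$. Klingenberg's theorem for simply-connected even-dimensional manifolds with $K \leq 1$ gives $\mathrm{inj}(S^2,g) \geq \pi$, hence $L \geq 2\pi$. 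If $\gamma$ is simple, the Hopf Umlaufsatz together with the fact that $T^1_gS^2 \to S^2$ has Euler number $2$ forces $\gamma^{(1)}$ to generate $\pi_1$, so $\tilde L = 2L \geq 4\pi$. If $\gamma$ is non-simple, a self-intersection at a point $p$ splits $\gamma$ into two based geodesic loops; each must be non-trivial (since smoothness of the closed geodesic rules out out-and-back sub-loops), and a non-trivial geodesic loop at $p$ has length $\geq 2\,\mathrm{inj}(p) \geq 2\pi$, so $L \geq 4\pi$ and $\tilde L \geq L \geq 4\pi$. Either way, $\tilde L \geq 4\pi$.

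The rotation estimate is then a Jacobi/Pr\"ufer computation. Along the unit-speed geodesic $\gamma$, the linearised geodesic flow on $\ker\lambda_g$ expressed in the geodesic frame $\sigma_g$ of~\eqref{global_geodesic_frame} is governed by $y'' + Ky = 0$, and the substitution $y' = r\cos\phi$, $y = r\sin\phi$ gives $\phi' = \cos^2\phi + K\sin^2\phi$. A standard ODE comparison with the constant-coefficient companion $\phi_0' = \cos^2\phi_0 + \delta\sin^2\phi_0$, whose full-rotation period is $P_\delta := 2\pi/\sqrt\delta$, yields $\phi(\tilde L) \geq \phi_0(\tilde L)$. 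The pinching $\delta > 1/4$ is equivalent to $P_\delta < 4\pi \leq \tilde L$, so $\phi_0(\tilde L) > 2\pi$ and therefore $\rho_{D_g^*\sigma_g}(\tilde\gamma) > 1$.

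Finally, any two global symplectic frames of $\xi$ over $S^3$ differ by a gauge map $S^3 \to U(1)$, which is null-homotopic because $\pi_3(S^1) = 0$; hence the winding along every loop vanishes, and $\rho_{\sigma_0}(\tilde\gamma) = \rho_{D_g^*\sigma_g}(\tilde\gamma) > 1$ in any global trivialisation $\sigma_0$. This delivers $\mu_{\rm CZ}^{\sigma_0}(\tilde\gamma) \geq 3$, with the analogous bound for iterates $\tilde\gamma^n$ following from $n\rho(\tilde\gamma) > n \geq 1$. The main obstacle is the combinatorial/Riemannian case analysis producing the uniform bound $\tilde L \geq 4\pi$: the threshold $\delta > 1/4$ is precisely the value at which Klingenberg's injectivity bound and the geodesic-loop splitting together force $\tilde L \geq 4\pi$, while simultaneously the Sturm-comparison period $P_\delta$ drops strictly below $4\pi$, producing the strict inequality $\rho > 1$.
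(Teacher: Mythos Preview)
The paper does not prove this theorem; it is quoted from Harris--Paternain~\cite{HP} with a pointer to an alternative proof in~\cite{finsler}. Your argument is correct and is essentially the Harris--Paternain proof, carried out directly on~$S^3$.

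A few remarks on the details. Your lower bound $\tilde L \geq 4\pi$ is the heart of the matter and your case split is the standard one: for simple $\gamma$ the tangent lift generates $\pi_1(T^1_gS^2)\cong\Z/2$ (as you say, via the Umlaufsatz and the Euler number), so the lift to $S^3$ has period $2L\geq 4\pi$ by Klingenberg; for non-simple $\gamma$, a transverse self-intersection splits $\gamma$ into two genuine geodesic loops, and the midpoint argument you sketch gives each length $\geq 2\,\mathrm{inj}\geq 2\pi$. The Pr\"ufer/Sturm step matches exactly the computation in the paper's Lemma~\ref{lemma_angular_twist_curvature}: in the geodesic frame $\sigma_g$ the angle satisfies $\theta' = \cos^2\theta + K\sin^2\theta$, and comparing with the $\delta$-companion gives $\lim_{t\to\infty}\theta(t)/t \geq \sqrt{\delta}$, hence
\[
\rho_{\tilde\sigma_g}(\tilde\gamma) \;=\; \frac{\tilde L}{2\pi}\,\lim_{t\to\infty}\frac{\theta(t)}{t} \;\geq\; \frac{4\pi}{2\pi}\sqrt{\delta} \;=\; 2\sqrt{\delta} \;>\; 1.
\]
Your final step, that all global symplectic frames of $\xi$ over $S^3$ are homotopic (so the rotation number and hence $\mu_{\rm CZ}$ are frame-independent), is also correct; this is the same observation used implicitly throughout the paper and explicitly in Lemma~\ref{independance frame}.
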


\begin{remark}
In~\cite{HP} one finds a version of the above theorem for Finsler metrics. Dynamical convexity is ensured once the flag curvatures are pinched by more than $(r/(r+1))^2$ where $r\geq 1$ is the reversibility parameter. In~\cite{finsler} one finds examples showing that this pinching condition is sharp for dynamical convexity.
\end{remark}

From now on we denote
\begin{equation*}
\begin{aligned} \phi_g^t & \qquad \text{the geodesic flow on $T^1_gS^2$} \\ \phi^t & \qquad \text{the lift of $\phi_g^t$ to $S^3$ via $D_g$} \end{aligned}
\end{equation*}
so that the identity
\begin{equation*}
\phi^t_g \circ D_g = D_g \circ \phi^t
\end{equation*}
holds. The generating vector field $X$ of $\phi^t$ is the Reeb vector field of the contact form~$f_g\lambda_0$. Let $\tilde\sigma_g$ be the lift of the frame $\sigma_g$~\eqref{global_geodesic_frame} to $S^3$ by $D_g$. Then $\tilde\sigma_g$ is a $d(f_g\lambda_0)$-symplectic global frame of $\xi = \ker\lambda_0$, and as in~\eqref{angular_coord_sigma} we get a well-defined global circle coordinate $\Theta_{\tilde\sigma_g}:(\xi\setminus0)/\R_+ \to \R/2\pi\Z$. The linearised flow $D\phi^t$ on $\xi$ induces a flow on $(\xi\setminus0)/\R_+$, still denoted $D\phi^t$, whose generating vector field is denoted by~$\hat X$.

The following elementary lemma can be proved with the arguments presented in~\cite[section~2.4.2]{finsler}.

\begin{lemma}\label{lemma_angular_twist_curvature}
We have $\min\{1,K_{\mathrm{min}}\} \leq i_{\hat X}d\Theta_{\tilde\sigma_g} \leq \max\{1,K_{\mathrm{max}}\}$ where we denote by $K_{\mathrm{min}},K_{\max}$ the minimum and the maximum of the Gaussian curvature of $(S^2,g)$.
\end{lemma}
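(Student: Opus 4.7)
The plan is to reduce the computation to the base $T^1_g S^2$ via the local diffeomorphism $D_g$, and then to evaluate the angular speed in the geodesic frame using the Jacobi equation on $(S^2,g)$.

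\textbf{Reduction to the base.} Since $D_g:S^3\to T^1_g S^2$ is a local diffeomorphism and $\tilde\sigma_g$ is by definition the pull-back of $\sigma_g$, the induced angular coordinate $\Theta_{\tilde\sigma_g}$ on $(\xi\setminus 0)/\R_+$ agrees with the pull-back of the analogous coordinate $\Theta_{\sigma_g}$ on $(\ker\lambda_g\setminus 0)/\R_+$, and $D\phi^t$ intertwines with $D\phi_g^t$. Hence the function $i_{\tilde X}d\Theta_{\tilde\sigma_g}$ is the pull-back of the corresponding function downstairs, and it suffices to establish the estimate
\[
\min\{1,K_{\min}\}\leq i_{\tilde X_g} d\Theta_{\sigma_g}\leq \max\{1,K_{\max}\}
\]
on $(\ker\lambda_g\setminus 0)/\R_+$, where $\tilde X_g$ is the generator of the linearised geodesic flow on that projectivised bundle.

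\textbf{Jacobi equation in the geodesic frame.} Fix $v_0\in T^1_g S^2$ and let $\gamma(t)$ be the geodesic with $\gamma'(0)=v_0$. Set $v(t)=\gamma'(t)$ and write the parallel-transported perpendicular $e_2(t):=v(t)^\perp$. Given $\eta_0\in\ker\lambda_g|_{v_0}$, define $\eta(t)=D\phi_g^t(v_0)\cdot\eta_0$ and decompose it in the geodesic frame as
\[
\eta(t)=a(t)\,(e_2(t))^{\ver}+b(t)\,(e_2(t))^{\hor}.
\]
Using that $d\pi(\eta(t))=b(t)\,e_2(t)$ is a Jacobi field $J(t)$ along $\gamma$ and that $K(\eta(t))=\nabla_{\gamma'}J(t)=a(t)\,e_2(t)$ (with $e_2$ parallel), the Jacobi equation $\nabla_{\gamma'}^2 J+R(J,\gamma')\gamma'=0$ combined with $R(J,\gamma')\gamma'=K(\gamma(t))\,b\,e_2$ yields the planar linear system
\[
\dot b=a,\qquad \dot a=-K(\gamma(t))\,b.
\]

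\textbf{Angular speed.} Writing $(a,b)=r(\cos\theta,\sin\theta)$, the angle $\theta$ agrees with the lift of $\Theta_{\sigma_g}(\R_+\eta(t))$ up to a global constant. Hence
\[
i_{\tilde X_g}d\Theta_{\sigma_g}(\R_+\eta(t))=\dot\theta=\frac{a\dot b-b\dot a}{a^2+b^2}=\frac{a^2+K(\gamma(t))\,b^2}{a^2+b^2},
\]
which is a convex combination of $1$ and $K(\gamma(t))$. In particular,
\[
\min\{1,K(\gamma(t))\}\leq i_{\tilde X_g}d\Theta_{\sigma_g}\leq \max\{1,K(\gamma(t))\}
\]
pointwise on $(\ker\lambda_g\setminus 0)/\R_+$. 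Taking infimum and supremum over $T^1_g S^2$, and then pulling back via $D_g$, gives the claimed bounds.

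The only subtle point is matching orientation conventions, namely ensuring that the sign in the Jacobi equation is compatible with the ordering $\{(v^\perp)^{\ver},(v^\perp)^{\hor}\}$ chosen for $\sigma_g$, so that the off-diagonal entries of the linear system are $(-K,1)$ rather than $(K,-1)$; this is a direct check from $\dot b=a$ which is forced by $b$ being the horizontal component.
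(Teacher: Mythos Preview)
Your proof is correct and follows essentially the same approach as the paper: both reduce the angular velocity computation to the Jacobi equation along the geodesic, write the linearised flow in the geodesic frame $\sigma_g$ as the planar system with matrix $J_0\,\mathrm{diag}(1,K)$, and conclude that $\dot\theta$ is a convex combination of $1$ and $K$. The only cosmetic difference is that you make the reduction to the base via $D_g$ explicit at the outset, whereas the paper phrases the whole computation upstairs and pushes down through $dD_g$ in the middle of the argument.
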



Let $c:\R/L\Z \to (S^2,g)$ be a unit speed smooth immersion. It induces a smooth immersion $s \in \R/L\Z \mapsto (c(s),\dot c(s)) \in T^1_{g}S^2$ such that $\lambda_g \cdot \frac{d}{ds}(c,\dot c) = g(c)(\dot c,\dot c) = 1$. If $c$ has no positive self-tangencies then $(c,\dot c)$ defines a knot on $T^1_gS^2$. Note that $\pi_1(T^1_{g_0}S^2,\mathrm{pt})$ is isomorphic to $\Z/2\Z$, and a generator can be taken as $s\mapsto (c(s),\dot c(s))$ for an embedded unit speed loop $s\mapsto c(s)$ in $S^2$.

\begin{lemma}[\cite{finsler}, subsection~3.1]\label{lemma_unknots}
Let $c:\R/L\Z \to S^2$ be a smooth unit speed embedding, and denote by $\gamma_c:\R/2L\Z\to S^3$ a lift to $S^3$ by $D_g$ of the double cover of $s\mapsto (c(s),\dot c(s))$. Then $\gamma_c$ is unknotted with self-linking number $-1$.
\end{lemma}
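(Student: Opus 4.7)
The plan is to reduce, via a smooth isotopy through knots transverse to $\ker\lambda_0$, to the case where $g$ equals the round metric $g_0$ and $c$ is a great circle, for which $\gamma_c$ becomes a Hopf fibre in $S^3$ and the conclusion is classical. Knot type and self-linking number of transverse knots are invariant under smooth transverse isotopies, so such a reduction would be enough.

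First I would choose a smooth family of metrics $g_t=(1-t)g+tg_0$ on $S^2$ and a smooth isotopy of embeddings $c_t:S^1\hookrightarrow S^2$ connecting $c$ at $t=0$ to a fixed great circle at $t=1$; such an isotopy exists because the space of smooth embeddings $S^1\hookrightarrow S^2$ is path-connected. Setting
\[
u_t(s):=\bigl(c_t(s),\dot c_t(s)/|\dot c_t(s)|_{g_t}\bigr)\in T^1_{g_t}S^2,
\]
I would define $\gamma_{c_t}$ as a continuous lift through $D_{g_t}$ of the double cover of $u_t$, with $\gamma_{c_0}=\gamma_c$. The key observation is that $[u_t]\in\pi_1(T^1_{g_t}S^2)\simeq\Z/2\Z$ is the non-trivial element for every $t$: this class is locally constant along the isotopy and can be computed at $t=1$, where $u_1$ is the standard non-trivial loop in $\R P^3\simeq T^1_{g_0}S^2$. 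Consequently the double cover of $u_t$ is null-homotopic in $T^1_{g_t}S^2$ and does lift to a closed loop in $S^3$; the non-trivial monodromy of $u_t$ further forces this lift to trace the single connected embedded component of $D_{g_t}^{-1}(\mathrm{im}\,u_t)$ exactly once, so $\gamma_{c_t}$ is embedded. Transversality of $\gamma_{c_t}$ to $\ker\lambda_0$ is automatic: $\lambda_{g_t}$ evaluates on $u_t'$ to $|\dot c_t|_{g_t}>0$, and pulling back via $D_{g_t}^*\lambda_{g_t}=f_{g_t}\lambda_0$ with $f_{g_t}>0$ transports this to $\lambda_0(\dot\gamma_{c_t})>0$.

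Finally I would settle the case $t=1$. Using $D_{g_0}^*\lambda_{g_0}=4\lambda_0$ and the fact that the Reeb field of $4\lambda_0$ is one quarter of the Hopf Reeb field of $\lambda_0$, a direct computation identifies $\gamma_{c_1}$ with a Hopf fibre traversed once. A Hopf fibre is the standard unknot in $S^3$ and bounds a disk transverse in its interior to the Hopf flow, so by Theorem~\ref{thm_JSG} its self-linking number equals $-1$. Invariance of knot type and self-linking number under smooth isotopy through transverse knots then yields the same conclusion for $\gamma_c=\gamma_{c_0}$. The main technical point is the middle step: verifying that the lift $\gamma_{c_t}$ can be chosen continuously in $t$ and remains embedded throughout, which hinges on the constancy of the homotopy class $[u_t]$ along the deformation.
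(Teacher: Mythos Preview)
The paper does not supply its own proof of this lemma; it is quoted from \cite{finsler}, subsection~3.1, so there is no in-paper argument to compare against. Your isotopy argument is essentially the standard one and is correct: deform $(g,c)$ to $(g_0,\text{great circle})$, check that the lifts $\gamma_{c_t}$ form a smooth family of embedded knots positively transverse to $\ker\lambda_0$, and read off the invariants from the Hopf fibre. The embeddedness step is handled correctly via the $\Z/2\Z$ monodromy, and your observation that all the $u_t$ can be viewed inside the common space $TS^2\setminus 0$ (which deformation retracts onto any $T^1_{g_t}S^2$) makes the ``locally constant homotopy class'' argument clean.

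One small comment: invoking Theorem~\ref{thm_JSG} to deduce that the Hopf fibre has self-linking number $-1$ is unnecessarily heavy. That theorem relies on the full pseudo-holomorphic machinery of \cite{convex,openbook}, whereas $\mathrm{sl}(\text{Hopf fibre})=-1$ is an elementary computation: push the fibre $\{(e^{i\theta},0)\}$ off in the direction of a global section of $\xi$ (for instance the one coming from the quaternionic frame $J_1,J_2$) and observe that the push-off is a nearby Hopf fibre linking the original once with sign $-1$. Replacing that citation with the direct computation keeps the argument self-contained and avoids any appearance of circularity.
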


Suppose now that  $c:\R/L\Z \to S^2$ is a unit speed embedding. Choose a lift $\gamma_c:\R/2L\Z\to S^3$ of the double cover of $s\mapsto (c(s),\dot c(s))$ to $S^3$ by $D_g$. The Birkhoff annulus $A_c \subset T_g^1S^2$ associated to $c$ is parametrised by
\begin{equation}
a:\R/L\Z \times[0,\pi] \to T^1_{g}S^2, \qquad a(s,\theta) = (c(s),\cos\theta \ \dot c(s) + \sin\theta \ \dot c(s)^\bot).
\end{equation}
It follows from this formula that $a$ admits a unique double lift to $S^3$
\begin{equation}\label{lifting_a}
\tilde a:\R/2L\Z \times [0,\pi] \to S^3
\end{equation}
fixed by requiring that $D_g \circ\tilde a(s,\theta) = a(s\mod L,\theta)$ holds identically, together with the boundary condition $\tilde a(s,0)=\gamma_c(s)$. The image $\tilde A_c$ of $\tilde a$ is an embedded annulus in $S^3$ and will be referred to as the \textit{lifted Birkhoff annulus} associated to~$c$. If $A_c \subset T_g^1S^2$ is the image of $a$ then as sets we have $\tilde A_c = D_g^{-1}(A_c)$ which holds because $\tilde A_c$ is antipodal symmetric and satisfies $D_g(\tilde A_c) = A_c$. When $\tilde A_c$ is oriented by $ds\wedge d\theta$ the oriented boundary $\partial\tilde A_c$ consists of $\gamma_c$ together with a lift $\hat\gamma_c:\R/2L\Z\to S^3$ of the double cover of $s\mapsto (c(-s),-\dot c(-s))$.

\begin{lemma}\label{lemma_cohom_disk_versus_annulus}
The identity $$ \link(\beta,\gamma_c) + \link(\beta,\hat\gamma_c)= \mathrm{int}(\beta,\tilde A_c) $$ holds for every loop $\beta$ on $S^3\setminus (\gamma_c \cup \hat\gamma_c)$.
\end{lemma}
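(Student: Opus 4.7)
The plan is to recognise this as a standard Seifert-type identity: linking number equals algebraic intersection with a bounding surface. The lifted Birkhoff annulus $\tilde A_c$ is a smooth embedded $2$-chain in $S^3$ whose oriented boundary, as stated just before the lemma, is precisely $\partial\tilde A_c = \gamma_c + \hat\gamma_c$ (as an oriented $1$-cycle in $S^3$) when $\tilde A_c$ carries the orientation induced by $ds\wedge d\theta$. All the claim really does is compare two equivalent ways of computing a linking number in the homology sphere $S^3$.

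Concretely, I would proceed as follows. First, I would perturb $\beta$ slightly inside $S^3 \setminus (\gamma_c \cup \hat\gamma_c)$ so that $\beta$ becomes transverse to the interior of $\tilde A_c$; the resulting algebraic count of intersection points defines $\mathrm{int}(\beta,\tilde A_c)$. Next, choose any two Seifert surfaces $S_1, S_2 \subset S^3$ with $\partial S_1 = \gamma_c$ and $\partial S_2 = \hat\gamma_c$ as oriented $1$-cycles, and arrange them to be transverse to $\beta$; by the defining property of linking numbers via Seifert surfaces one has
\begin{equation*}
\link(\beta,\gamma_c) = \mathrm{int}(\beta,S_1), \qquad \link(\beta,\hat\gamma_c) = \mathrm{int}(\beta,S_2).
\end{equation*}

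The key step is then to observe that the $2$-cycle $\tilde A_c - S_1 - S_2$ is a boundary in $S^3$. Indeed, since its oriented boundary vanishes and $H_2(S^3;\Z) = 0$, it bounds a $3$-chain, hence has zero algebraic intersection with the closed $1$-cycle $\beta$. This is precisely the same mechanism used in the proof of Lemma~\ref{lemma_rotation_numbers}. Therefore
\begin{equation*}
\mathrm{int}(\beta,\tilde A_c) = \mathrm{int}(\beta,S_1) + \mathrm{int}(\beta,S_2) = \link(\beta,\gamma_c) + \link(\beta,\hat\gamma_c),
\end{equation*}
which is the desired identity.

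There is no real obstacle here beyond careful bookkeeping of orientations. The only place where one must be attentive is verifying that the boundary orientation of $\tilde A_c$ really matches the orientations of $\gamma_c$ and $\hat\gamma_c$ used in the definitions of the linking numbers on the right-hand side; this is fixed by the convention already spelled out in the paragraph defining $\tilde a$ in~\eqref{lifting_a} and the description of $\partial\tilde A_c$ immediately preceding the lemma, so no additional sign computation is needed.
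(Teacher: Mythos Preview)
Your proof is correct and follows essentially the same approach as the paper: choose Seifert surfaces for $\gamma_c$ and $\hat\gamma_c$, observe that $\tilde A_c$ minus these two surfaces is a closed $2$-cycle in $S^3$, and use $H_2(S^3;\Z)=0$ to conclude that its intersection number with $\beta$ vanishes. The paper's version is terser (three sentences) but the argument is the same.
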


\begin{proof}
Let $D$ and $\hat D$ be oriented disks spanned by $\gamma_c$ and $\hat\gamma_c$, respectively, in such a way that the boundary orientations coincide with the flow orientation. Then $C = D + \hat D - \tilde A_c$ is a $2$-cycle on $S^3$. Thus $0 = \mathrm{int}(\beta,C)$ holds for every loop $\beta$ on $S^3$. The conclusion follows.
\end{proof}

From now on we make the standing assumption that $c$ is an embedded closed geodesic. We consider covering maps
\begin{equation}\label{covering_maps}
\begin{aligned}
P : \R/2L\Z\times[0,\pi] \to \R/L\Z\times[0,\pi] \qquad (s,\theta) &\mapsto (s\mod L,\theta), \\
P_\infty : \R\times[0,\pi] \to \R/2L\Z\times[0,\pi] \qquad (s,\theta) &\mapsto (s\mod 2L,\theta).
\end{aligned}
\end{equation}
We have an identity $D_g\circ \tilde{a} = a \circ P$. Let $\Psi:A_c\to A_c$ be the return map, which exists by the result of Birkhoff~\cite{birkhoff}. Note that, in principle, this return map would only be defined on the interior of $A_c$ but then it can be extended smoothly up to the boundary by taking second conjugate points. We proceed assuming that the map has been extended in this manner. Then $\tilde A_c$ is a global surface of section for the lifted flow on $S^3$ and the return map $\tilde{\Psi}:\tilde{A}_c\to\tilde{A}_c$ satisfies $D_g\circ\tilde{\Psi}=\Psi\circ D_g$. As above, this map exists and is smooth up on the closed annulus $\tilde A_c$.

Later we will use the geometry to choose an appropriate smooth lift
\begin{equation}\label{eqn_lift}
\bar{\Psi}:\R\times[0,\pi]\to\R\times[0,\pi]
\end{equation}
of $\tilde\Psi$, i.e., a map that satisfies $\tilde{a}\circ P_\infty \circ \bar{\Psi} = \tilde{\Psi} \circ \tilde{a} \circ P_\infty$, and hence makes the following diagram commute
\begin{equation*}
\begin{tikzcd}
\R\times[0,\pi] \arrow[r,"P_\infty"] \arrow[d,"\bar{\Psi}"] & \R/2L\Z\times[0,\pi]\arrow[r,"\tilde{a}"] & \tilde{A}_c\arrow[r,"D_g"] \arrow[d,"\tilde{\Psi}"]& A_c \arrow[d,"\Psi"]\\
\R\times[0,\pi] \arrow[r,"P_\infty"] & \R/2L\Z\times[0,\pi]\arrow[r,"\tilde{a}"] & \tilde{A}_c\arrow[r,"D_g"] & A_c
\end{tikzcd}
\end{equation*}

\subsection{Asymptotic estimates on linking and intersection numbers}\label{ssec_estimates_linking_numbers}

Denote by $\delta>0$ the pinching factor. We may assume that 
\begin{equation}\label{normalisation_curvature}
\delta = K_{\min} \leq K_{\max} = 1
\end{equation}
where $K_{\min},K_{\max}$ denote the minimum and the maximum of the Gaussian curvature. Recall that if $\delta>1/4$, then $\phi_g^t$ is dynamically convex \cite{HP}, hence so is~$\phi^t$. Lemma~\ref{lemma_unknots} and Theorem~\ref{thm_JSG} imply that both $\gamma_c$ and $\hat\gamma_c$ span disk-like global surfaces of section. Denote by $\hat D$ a $\partial$-strong disk-like surface of section spanned by~$\hat\gamma_c$;~see Remark~\ref{rmk_JSG_strong}.

We write $\Theta = \Theta_{\tilde\sigma_g}$ for simplicity. With $u \in \xi$, $u\neq0$, arbitrary we denote $$ \Delta\Theta(T,u) = \tilde{\Theta}(T,u)-\tilde{\Theta}(0,u) $$ where $t\mapsto \tilde\Theta(t,u)$ is a continuous lift of $t\mapsto \Theta(D\phi^t(u))$. It does not depend on the choice of lift.

Let $T\geq 0$, $x \in S^3\setminus \hat\gamma_c$. Recall that $k(T,x;\hat D)$ is a loop obtained by closing the piece of trajectory $\phi^{I(T,x;\hat D)}(x)$ with a path $\alpha$ in $\hat D\setminus\hat\gamma_c$. From now on we shall refer to $\alpha$ as a closing path for $(T,x;\hat D)$. Clearly, $\link(k(T,x;\hat D),\hat\gamma_c)$ does not depend on the choice of $\alpha$. Suppose further that $x \in S^3 \setminus (\gamma_c \cup \hat\gamma_c)$. The number $\text{int}(k(T,x;\hat D),\tilde{A}_c)$ might not be well-defined since $\alpha$ could go through the (unique) intersection point of $\gamma_c$ and $\hat D$. Even if it does not touch this point, $\alpha$ can be chosen in such a way that $\text{int}(k(T,x;\hat D),\tilde{A}_c)$ is any integer. Below we might write $k_\alpha(T,x;\hat D)$ if the dependence on $\alpha$ needs to be made explicit.

\begin{lemma}\label{lemma_application_gen_pos}
There exists $C>0$ with the following property. For every point $x\in S^3\setminus(\gamma_c\cup\hat\gamma_c)$ and every $T>0$ there exists a closing path $\alpha$ for $(T,x;\hat D)$ contained in $\hat D\setminus(\gamma_c \cup \hat \gamma_c)$ such that
\begin{equation}\label{0_est_linking}
| {\rm int}(k_{\alpha}(T,x;\hat D),\tilde A_c) - \#\{t\in[0,T]\mid \phi^t(x) \in \tilde A_c\} | \ \leq \ C.
\end{equation}
\end{lemma}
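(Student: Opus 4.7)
The strategy is to split the loop $k_\alpha(T,x;\hat D)$ into the trajectory piece $\phi^{I(T,x;\hat D)}(x)$ and the closing arc $\alpha$, and to control each contribution to the intersection number separately. By additivity one has
\begin{equation*}
{\rm int}(k_\alpha(T,x;\hat D),\tilde A_c) = {\rm int}(\phi^{I(T,x;\hat D)}(x),\tilde A_c) + {\rm int}(\alpha,\tilde A_c).
\end{equation*}
Using Appendix~\ref{app_gen_position} I would first arrange $\hat D$ and $\tilde A_c$ in generic position, so that $(\hat D \cap \tilde A_c) \setminus \hat\gamma_c$ is a finite disjoint union of properly embedded smooth arcs and closed loops in the interior of $\hat D$, with possible endpoints on the finite set $\gamma_c \cap \hat D$ or on $\hat\gamma_c$. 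Denote by $N$ the total number of such components; this number depends only on $\hat D$ and $\tilde A_c$.

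To handle the trajectory piece I would use that $\tilde A_c$, being the lift of the Birkhoff annulus $A_c$, is a global surface of section for $\phi^t$, so the Reeb vector field is \emph{positively} transverse to the interior of $\tilde A_c$. Since $x \notin \gamma_c \cup \hat\gamma_c = \partial\tilde A_c$, the trajectory through $x$ avoids $\partial\tilde A_c$ entirely, and every one of its transverse intersections with $\tilde A_c$ is counted with sign $+1$. Hence
\begin{equation*}
{\rm int}(\phi^{I(T,x;\hat D)}(x),\tilde A_c) = \#\{t \in I(T,x;\hat D) \mid \phi^t(x) \in \tilde A_c\}.
\end{equation*}
Because $I(T,x;\hat D)\setminus[0,T]$ has total length at most $2\tau_{\max}(\hat D)$, and the return time of the flow to $\tilde A_c$ has a positive infimum $\tau^* > 0$ (by strongness of $\tilde A_c$), the number of hits of $\tilde A_c$ in $I(T,x;\hat D)\setminus[0,T]$ is bounded by $C_1 := 2\tau_{\max}(\hat D)/\tau^* + 2$, independently of $x$ and $T$.

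For the closing path I would appeal to planar topology: since $\hat D$ is a topological disk and $(\hat D \cap \tilde A_c)\setminus\hat\gamma_c$ is a finite union of embedded arcs and loops, together with finitely many forbidden points from $\gamma_c \cap \hat D$, for any two endpoints in $\hat D \setminus (\gamma_c \cup \hat\gamma_c)$ one can choose a path within $\hat D \setminus (\gamma_c \cup \hat\gamma_c)$ that crosses each component at most twice. Such an $\alpha$ satisfies $|{\rm int}(\alpha,\tilde A_c)| \leq 2N$. Combining with the previous paragraph yields the bound with $C := C_1 + 2N$.

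The main obstacle is the generic-position step: the surface $\hat D$, obtained from Theorem~\ref{thm_JSG} and Remark~\ref{rmk_JSG_strong}, is only $C^1$ up to the boundary, and $\tilde A_c$ shares the boundary circle $\hat\gamma_c$ with $\hat D$, so the transversality must be achieved carefully near that common boundary component without disturbing the $\partial$-strongness of $\hat D$. This is precisely the technical content of Appendix~\ref{app_gen_position}, on which the argument above depends in an essential way.
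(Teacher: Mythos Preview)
Your approach is essentially the paper's: perturb to relative generic position, choose a closing arc crossing each component of the intersection $1$-manifold a bounded number of times (the paper achieves ``at most once'' by an inductive arc-replacement argument, versus your ``at most twice''), and control the trajectory piece via the uniform return-time bounds exactly as you describe. The one step you leave implicit is that the paper does not put $\hat D$ itself in generic position but instead perturbs it to a nearby $\partial$-strong disk $\tilde D$ (Lemma~\ref{lemma_gen_pos_1}), builds the closing arc in $\tilde D$, and then pushes it to $\hat D$ via the small transfer map of Remark~\ref{rmk_small_transfer_map}, observing that the two loops are homotopic in $S^3\setminus(\gamma_c\cup\hat\gamma_c)$ so their intersection numbers with $\tilde A_c$ agree.
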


\begin{proof}
Since $\hat D$ is $\partial$-strong, {it is possible to} 
slightly $C^\infty$-perturb $\hat D$ to a new $\partial$-strong disk-like global surface of section $\tilde D$ satisfying $\partial\tilde D = \hat\gamma_c$ { and such that $\tilde D\setminus\partial\tilde D$ intersects $\tilde A_c\setminus\partial \tilde A_c$ transversely, and 
the manifolds $\{\R_+\nu \mid \nu\in T\tilde D\vert_\gamma \text{ is outward pointing} \}$ and $\{\R_+\nu \mid \nu\in T\tilde A_c\vert_\gamma \text{ is outward pointing} \}$ induce loops in the $2$-torus $\mathbb{P}_+\xi|_{\hat\gamma_c}$ that intersect transversely.} 
Denote by $K$ the closure of $(\tilde D \setminus \partial \tilde D) \cap (\tilde A_c \setminus \partial \tilde A_c) = (\tilde D \cap \tilde A_c) \setminus (\hat\gamma_c \cup \gamma_c)$. 
{Then, }$K$ is a smooth compact $1$-manifold, $\partial K = K \cap (\hat\gamma_c \cup \gamma_c)$, and at the boundary points of $K$ the $1$-manifolds $\hat\gamma_c \cup \gamma_c$ and $K$ are not tangent.

Recall the interval $I(T,x;\hat D)$ defined in~\eqref{interval_(T,x)}. Denote by $\hat x_0 \in \hat D$ and $\hat x_1 \in \hat D$ the initial and end points of the trajectory $\phi^{I(T,x;\hat D)}(x)$. We consider the small transfer map $\psi : p \in \tilde D \mapsto \phi^{g(p)}(p) \in \hat D$ where $g$ is {a smooth function on $\tilde D$, up to the boundary, $C^\infty$-close to zero, satisfying $\phi^{g(p)}(p)\in\hat D$ for every $p\in\tilde D$}. 
Define $\tilde x_0 = \psi^{-1}(\hat x_0)$, $\tilde x_1 = \psi^{-1}(\hat x_1)$. There is an induced interval $\tilde I$ whose end points are close to those of $I(T,x;\hat D)$ such that $\phi^{\tilde I}(x)$ is a piece of trajectory from $\tilde x_0$ to $\tilde x_1$. The Lebesgue measure of the symmetric difference between $\tilde I$ and $I(T,x;\hat D)$ is not larger than $2\|g\|_{L^\infty} \ll1$.

Assume $\tilde x_0\neq \tilde x_1$. Consider a smooth immersion $\alpha_0 : [0,1] \to \tilde D \setminus (\hat\gamma_c \cup \gamma_c)$ from $\tilde x_1 = \alpha_0(0)$ to $\tilde x_0 = \alpha_0(1)$ that is transverse to $K$. Let $\beta$ be a connected component of $K$, and suppose $t_0<t_1$ satisfy $\alpha_0(t_0),\alpha_0(t_1)\in\beta$. Consider the piecewise smooth arc $\hat \alpha_0$ obtained from $\alpha_0$ by replacing $\alpha_0|_{[t_0,t_1]}$ with the arc in $\beta$ from $\alpha_0(t_0)$ to $\alpha_0(t_1)$. A further small perturbation of $\hat \alpha_0$ creates a smooth immersed arc $\alpha_1:[0,1] \to \tilde D \setminus (\hat\gamma_c \cup \gamma_c)$ from $\tilde x_1$ to $\tilde x_0$, transverse to $K$, such that the number of intersection points of $\alpha_1$ and any component of $K$ is no larger than that of $\alpha_0$, and the number of intersection points of $\alpha_1$ with $\beta$ is strictly less than the number of intersection points of $\alpha_0$ with~$\beta$. Proceeding inductively in this way, after a finite number of steps we end up with an immersed arc $\tilde\alpha$ in $\tilde D \setminus (\hat\gamma_c \cup \gamma_c)$ from $\tilde x_1$ to $\tilde x_0$ which is transverse to $K$ and intersects each connected component of $K$ at most once. Consider the loop $\tilde k$ obtained by concatenating $\phi^{\tilde I}(x)$ with $\tilde \alpha$. Then 
\begin{equation}\label{1_est_linking}
|{\rm int}(\tilde k,\tilde A_c) - \#\{t\in \tilde I \mid \phi^t(x) \in \tilde A_c\}| \ \leq \ N + 2
\end{equation}
where $N$ is the number of connected components of $K$. The image $\alpha = \psi(\tilde \alpha) \subset \hat D$ is an arc from $\hat x_1$ to $\hat x_0$. By construction $k_{\alpha}(T,x;\hat D)$ is homotopic to $\tilde k$ on $S^3\setminus (\gamma_c \cup \hat \gamma_c)$. Hence
\begin{equation}\label{2_est_linking}
{\rm int}(\tilde k,\tilde A_c) = {\rm int}(k_{\alpha}(T,x;\hat D),\tilde A_c)
\end{equation}
Finally, consider $\tau_{\max}(\hat D)$ the supremum of the return time of $\hat D$ and $\tau_{\min}(\tilde A_c)>0$ the infimum of the return time of $\tilde A_c$. Note that the Lebesgue measure of the symmetric difference of $\tilde I$ and $[0,T]$ is not larger than $2(\tau_{\max}(\hat D)+\|g\|_{L^\infty})$. Note also that for any interval $J$ one estimates
\begin{equation*}
\# \{ t\in J \mid \phi^t(x) \in \tilde A_c \} \leq \frac{|J|}{\tau_{\min}(\tilde A_c)} + 2
\end{equation*}
Hence
\begin{equation}\label{3_est_linking}
\begin{aligned}
& | \# \{ t\in \tilde I \mid \phi^t(x) \in \tilde A_c \} - \# \{ t\in [0,T] \mid \phi^t(x) \in \tilde A_c \} | \\
& \qquad \leq \ \frac{2(\tau_{\max}(\hat D)+\|g\|_{L^\infty})}{\tau_{\min}(\tilde A_c)} + 2
\end{aligned}
\end{equation}
Combining~\eqref{1_est_linking},~\eqref{2_est_linking} and~\eqref{3_est_linking} we arrive at~\eqref{0_est_linking} with 
\begin{equation*}
C = N + \frac{2(\tau_{\max}(\hat D)+\|g\|_{L^\infty})}{\tau_{\min}(\tilde A_c)} + 4
\end{equation*}
as desired.

The case $\tilde x_0 = \tilde x_1$ is simpler since there is no need to consider an immersion $\alpha$ to close the path.
\end{proof}

Next we recall some definitions and facts about Riemannian geometry that will be useful all along this section; see~\cite{ABHS}.
	
\begin{definition}
A set $D\subset S^2$ is said to be a geodesic polygon if it is the closure of an open disk bounded by a simple closed unit speed broken geodesic $\gamma:\R/L\Z\to S^2$. A corner of $\gamma:\R/L\Z\to S^2$ is a point $\gamma(t)$ such that $\gamma'_+(t)\notin\R^+\gamma_-'(t)$, where $\gamma'_{\pm}$ denote one-sided derivatives. The corners of $\gamma$ are called vertices of $D$, and a side of $D$ is a smooth geodesic arc contained in $\partial D$ connecting two adjacent vertices.
\end{definition}

For $p\in S^2$ and $u,v\in T_pS^2$ non-collinear vectors, consider the sets
\[
\Delta(u,v):=\{ su+tv |\ s,t\geq 0 \}, \qquad \Delta_r(u,v):=\{ w\in\Delta(u,v) |\ |w|<r \}
\]
for some $r>0$. We denote by ${\rm inj}_p$ the injectivity radius at $p$, and ${\rm inj} = \inf_p {\rm inj}_p$ the injectivity radius of $g$.

\begin{definition}
A geodesic polygon $D\subset S^2$ is convex if for every corner $p=\gamma(t)$ of $\partial D$ we find $0<r<\mathrm{inj}_p$ small enough such that $D\cap B_r(p)=\mathrm{exp}(\Delta_r(-\gamma_-'(t),\gamma_+'(t)))$.
\end{definition}

\begin{figure}[h]
	\begin{center}
	\includegraphics[scale=0.07]{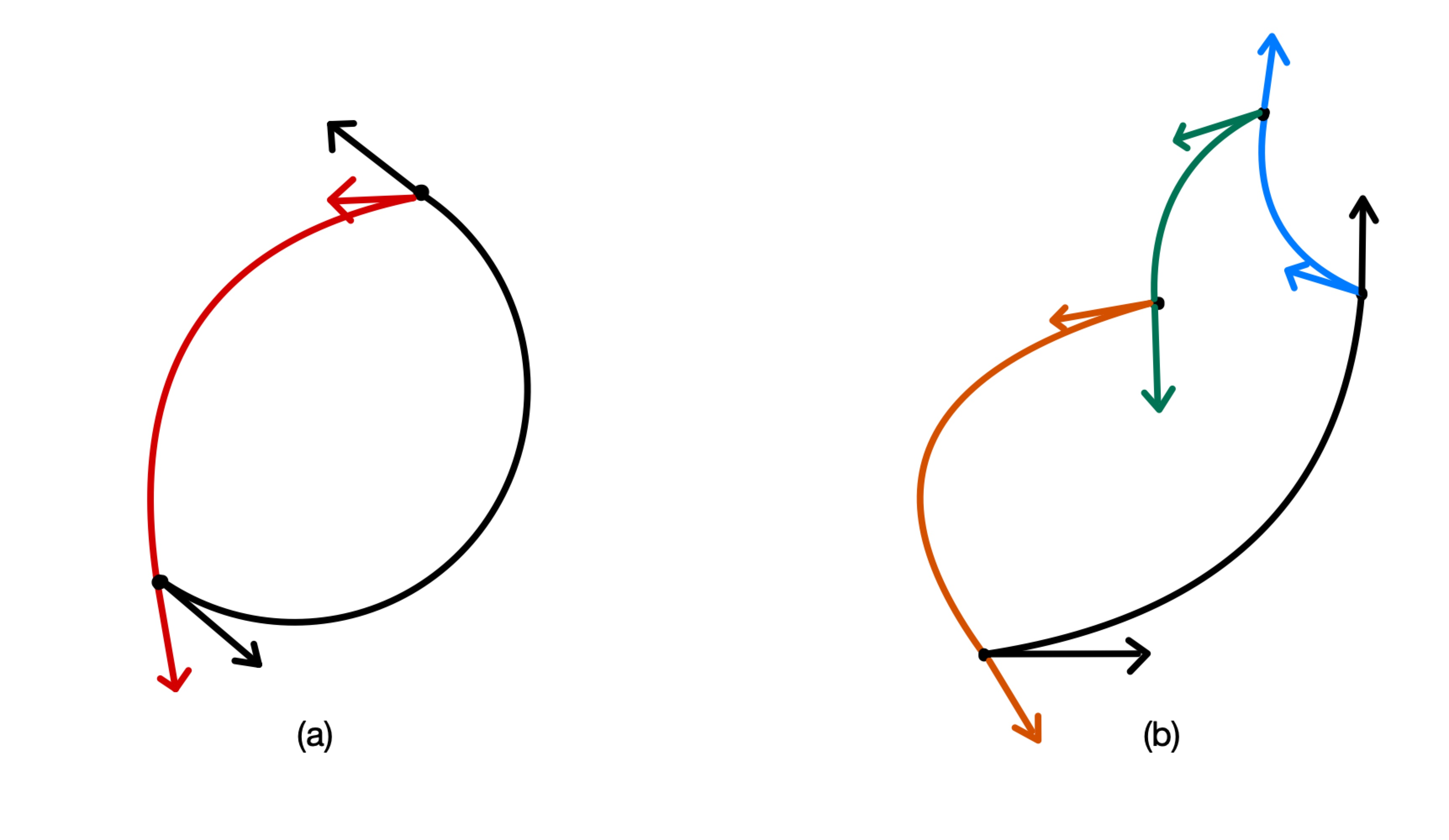}
	\caption{Examples of geodesic polygons, convex $(a)$ and non convex $(b)$.}
	\end{center}
\end{figure}
	
\begin{theorem}[Corollary of Toponogov's theorem~{\cite[Theorem A.12]{ABHS}}]\label{teo A 12}
If $D$ is a convex geodesic polygon in $(S^2,g)$ then $\vert \partial D \vert \leq 2\pi/\sqrt{\delta}$ where $\vert\partial D\vert$ denotes the perimeter of $D$.
\end{theorem}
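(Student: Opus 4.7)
The plan is to reduce the statement to its analogue on the round sphere $S^2_\delta$ of constant Gaussian curvature $\delta$, where it is classical, by means of Toponogov's triangle comparison.

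First I would dispatch the base case $n=2$: a convex geodesic digon consists of two geodesic arcs joining the same pair of points, each of length at most $\mathrm{diam}(S^2,g)\leq \pi/\sqrt{\delta}$ by Bonnet--Myers, so $|\partial D|\leq 2\pi/\sqrt{\delta}$.

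For $n\geq 3$ I would proceed by triangulating $D$ from a fixed vertex $v_1$. By convexity, each diagonal $v_1v_k$ with $3\leq k\leq n-1$ is realised as the (unique) minimising geodesic between its endpoints, and is contained in $D$; these diagonals cut $D$ into $n-2$ geodesic triangles $T_k = v_1 v_k v_{k+1}$. For each $T_k$, Toponogov's theorem provides a comparison triangle $\tilde T_k\subset S^2_\delta$ with the same three side lengths and with all three angles no larger than the corresponding angles of $T_k$. I would then glue the $\tilde T_k$ successively along the comparison diagonals to build a geodesic polygon $\tilde D\subset S^2_\delta$ whose outer sides have exactly the same lengths as the sides of $D$. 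Convexity of $D$ forces the interior angle of $D$ at each vertex $v_i$ to be strictly less than $\pi$; combined with the Toponogov angle comparison, the corresponding interior angle of $\tilde D$ at $\tilde v_i$ (obtained by summing the angles of the adjacent comparison triangles meeting at $\tilde v_i$) is also strictly less than $\pi$. Thus $\tilde D$ is a convex geodesic polygon in $S^2_\delta$ with $|\partial \tilde D|=|\partial D|$.

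The argument would then conclude by invoking the elementary spherical fact that a convex geodesic polygon on $S^2_\delta$ is contained in a closed hemisphere, and therefore has perimeter bounded by the length $2\pi/\sqrt{\delta}$ of a great circle.

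The main technical hurdle will be the assembly step: one must verify that the $\tilde T_k$ truly fit together along the shared diagonals to form an \emph{embedded} simple polygon in $S^2_\delta$, rather than a merely combinatorial gluing that could wrap or self-intersect. The delicate point is that global embeddedness of $\tilde D$ must be extracted from purely local angle data, and for this the strict convexity of $D$ together with the Toponogov inequalities at every shared vertex is essential, especially near the extremal regime $|\partial D|\sim 2\pi/\sqrt{\delta}$.
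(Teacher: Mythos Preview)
The paper does not prove this statement: it is quoted verbatim as \cite[Theorem~A.12]{ABHS} and used as a black box. So there is no ``paper's own proof'' to compare against; your proposal is an attempt to supply an argument that the authors deliberately outsourced.

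On the substance of your proposal, there are two genuine gaps.

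\medskip

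\textbf{The digon case.} Bonnet--Myers bounds the \emph{diameter}, i.e.\ the infimum of lengths of paths between two points, by $\pi/\sqrt{\delta}$. It says nothing about the length of an arbitrary geodesic arc joining two points, and the sides of a convex digon need not be minimising. So the inequality ``each side $\leq \pi/\sqrt{\delta}$'' is unjustified as written. (On the round sphere of curvature $\delta$ the extremal digon is a lune bounded by two half great circles of length exactly $\pi/\sqrt{\delta}$ each; your argument would need to recover that this is the worst case, not merely invoke the diameter bound.)

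\medskip

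\textbf{The triangulation step.} The paper's definition of ``convex geodesic polygon'' is purely local at the corners: near each vertex, $D$ coincides with a geodesic wedge of opening $<\pi$. This does \emph{not} say that $D$ is geodesically convex, nor that the sides are minimising. Your sentence ``By convexity, each diagonal $v_1v_k$ \dots\ is realised as the (unique) minimising geodesic between its endpoints, and is contained in $D$'' therefore assumes something substantially stronger than the hypothesis. Without minimising diagonals you cannot invoke the standard Toponogov triangle comparison, and without containment in $D$ the angle bookkeeping at $v_1$ breaks down. You correctly flag the assembly/embeddedness of $\tilde D$ as delicate, but the earlier step of producing the triangles $T_k$ with the required properties is already where the argument is incomplete.
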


{For the following theorem, recall that we are assuming $K_{\max}=1$, see~\eqref{normalisation_curvature}.}

\begin{theorem}[{\cite{Kli59},\cite[Theorem 2.6.9]{Kli82}}]\label{Kli}
The injectivity radius satisfies ${\rm inj} \geq \pi$.
\end{theorem}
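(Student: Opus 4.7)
Plan. The inequality is the classical Klingenberg lower bound on the injectivity radius, and I would prove it in three steps under the normalisation $K_{\max}=1$ given by~\eqref{normalisation_curvature}. First I would apply Rauch's comparison theorem to the upper curvature bound $K\le 1$ to conclude that the conjugate radius of $(S^2,g)$ satisfies $\mathrm{conj}(g)\ge \pi$; equivalently, no geodesic arc of length less than $\pi$ carries a conjugate point.

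Second I would argue by contradiction: assume $\mathrm{inj}(g)<\pi$. Since then $\mathrm{inj}(g)<\mathrm{conj}(g)$, Klingenberg's classical lemma on the structure of the cut locus below the conjugate radius produces a pair of points $p_0,q_0\in S^2$ with $d(p_0,q_0)=\mathrm{inj}(g)$, joined by two distinct minimising unit-speed geodesics $c_1,c_2$ whose tangent vectors at $p_0$ and at $q_0$ are opposite; concatenating $c_1$ with the reverse of $c_2$ therefore yields a smooth closed geodesic $\gamma$ of length exactly $2\mathrm{inj}(g)<2\pi$, which moreover realises the length of the shortest closed geodesic of $(S^2,g)$.

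Third I would apply Synge's trick. Because $S^2$ is even-dimensional and oriented, parallel transport around $\gamma$ is an orientation-preserving isometry of $T_{\gamma(0)}S^2$ that fixes $\gamma'(0)$, hence it acts as the identity on the one-dimensional orthogonal line $(\gamma'(0))^\perp$. This produces a globally defined unit parallel periodic normal field $N$ along $\gamma$. For the perpendicular variation $\gamma_s(t)=\exp_{\gamma(t)}(sN(t))$ the second variation of length reduces to
\[
\left.\tfrac{d^2}{ds^2}\right|_{s=0}L(\gamma_s) \;=\; -\int_0^{L(\gamma)} K(\gamma(t))\,dt \;<\;0,
\]
because $K\ge \delta>0$ everywhere. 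Consequently, for every small $s>0$ the loop $\gamma_s$ is a smooth closed curve strictly shorter than $\gamma$.

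The main obstacle is upgrading this local shortening to a genuine contradiction with the identity $\mathrm{inj}(g)=d(p_0,q_0)$. Following the standard completion of Klingenberg's argument, I would pick $p_s=\gamma_s(0)$ and $q_s=\gamma_s(L(\gamma)/2)$, which lie close to $p_0$ and $q_0$ respectively and split $\gamma_s$ into two sub-arcs whose total length equals $L(\gamma_s)<2\mathrm{inj}(g)$; hence one of these sub-arcs is a smooth path of length strictly less than $\mathrm{inj}(g)$ between points converging to $p_0,q_0$. Combined with the rigidity of the minimising bigon $(c_1,c_2)$ supplied by Klingenberg's lemma, this produces a geodesic loop of length below the injectivity radius and contradicts the extremal choice of $(p_0,q_0)$. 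This last rigidity step, which requires carefully tracking that the perturbed bigon remains minimising up to the order at which Synge's second-variation inequality takes effect, is the delicate part of the argument.
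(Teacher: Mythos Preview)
The paper does not prove Theorem~\ref{Kli} at all: it is stated as a classical result of Klingenberg and simply cited to \cite{Kli59} and \cite{Kli82}. So there is no ``paper's own proof'' to compare against; any correct argument you supply stands on its own.

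Your outline is the classical Klingenberg strategy and the first two steps are fine: Rauch gives $\mathrm{conj}(g)\ge\pi$ under $K\le 1$, and Klingenberg's cut-locus lemma produces, whenever $\mathrm{inj}<\mathrm{conj}$, a smooth closed geodesic $\gamma$ of length $2\,\mathrm{inj}(g)$ realising the shortest closed geodesic. The Synge variation also works exactly as you say, yielding nearby closed curves $\gamma_s$ strictly shorter than $\gamma$.

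Your third step, however, does not close. From $L(\gamma_s)<2\,\mathrm{inj}$ you only get that one half-arc has length $<\mathrm{inj}$, hence $d(p_s,q_s)<\mathrm{inj}$; letting $s\to 0$ this merely recovers $d(p_0,q_0)\le\mathrm{inj}$, which you already knew. The phrase ``rigidity of the minimising bigon'' is not a lemma you have stated, and no contradiction has been produced: on a simply connected manifold the curves $\gamma_s$ can, in principle, be shortened all the way to a point, so Synge's local instability by itself does not contradict minimality of $L(\gamma)$ among closed geodesics. The genuine completion (as in \cite{Kli82} or Cheeger--Ebin) uses Klingenberg's \emph{long homotopy} argument: any null-homotopy of a closed geodesic of length $<2\,\mathrm{conj}$ must pass through a curve of length $\ge 2\,\mathrm{conj}$, and combining this with the Synge deformation and a curve-shortening process traps a closed geodesic of length strictly between $L(\gamma)$ and $2\,\mathrm{conj}$, contradicting minimality. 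You should either invoke this lemma explicitly or, since the paper treats the theorem as a black box, simply cite \cite{Kli59,Kli82} as the paper does.
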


In particular it follows from theorems~\ref{teo A 12} and~\ref{Kli} that if $D\subset S^2$ is a convex geodesic {bi-gon} for the metric $g$ then
\begin{equation}\label{ineq Kli}
2\pi \leq \vert\partial D\vert \leq \frac{2\pi}{\sqrt{\delta}}.
\end{equation}

\begin{notation}
\label{Not alpha+}
Let $(x,v)\in T^1_gS^2$ be such that $x$ belongs to the embedded closed geodesic $c$. If $v$ is not tangent to $c$ then define	
\begin{equation}
\label{tau_plus}
\tau_+(x,v) := \min \{ t>0 \mid \pi \circ \phi_g^t(x,v) \in c \}.
\end{equation}
That is, $\tau_+(x,v)$ is the first positive time when the geodesic ray with initial conditions $(x,v)$ meets again $c$. Since $\delta>1/4$, by~\cite[Lemma~3.9]{ABHS} the geodesic arc $\{ \pi \circ \phi_g^{t}(x,v) \mid t\in [0,\tau_+(x,v)] \}$ is embedded, in particular $x \neq \pi \circ \phi_g^{\tau_+(x,v)}(x,v)$. If $v$ is tangent to $c$ then define $\tau_+(x,v)$ as the time to the first conjugate point of $(x,v)$. Denote by $\alpha_+(x,v)$ the path contained in $c$ joining $x$ to $\pi \circ \phi_g^{\tau_+(x,v)}(x,v)$ following the orientation of $c$. See Figure~\ref{two-gons fig}.
\begin{figure}[h]
\begin{center}
\includegraphics[scale=0.3]{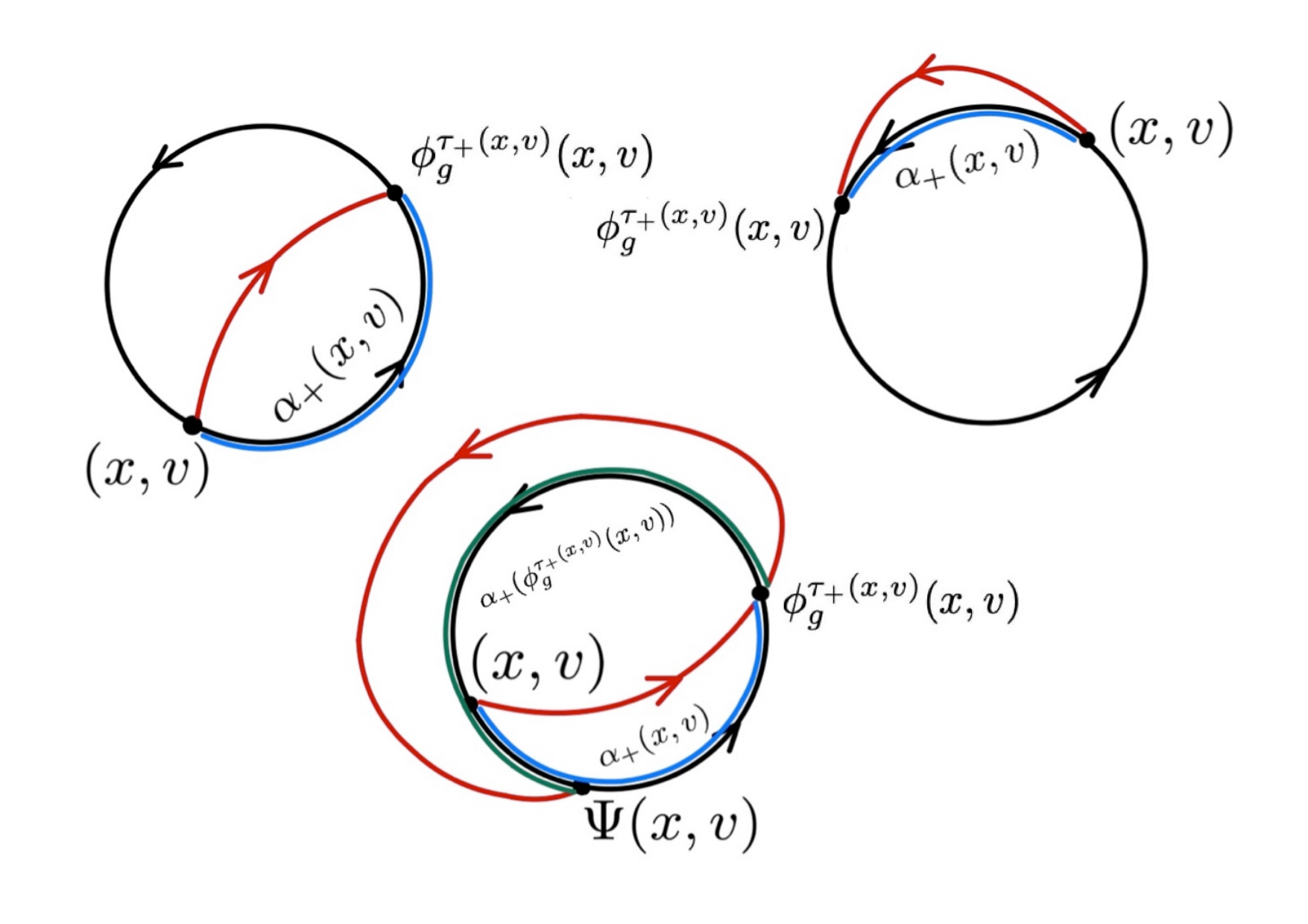}
\caption{Examples of paths $\alpha_+$.}
\label{two-gons fig}
\end{center}
\end{figure}
\end{notation}

{
\begin{lemma}
\label{lemma_new_estimate_pinching}
If $\delta>1/4$ then there exists a unique lift $\bar\Psi:\R\times[0,\pi]\to \R\times[0,\pi]$ such that
\[
L+2\pi\left(1-\frac{1}{\sqrt{\delta}}\right)\leq p_1\circ\bar\Psi(S,\theta)-S\leq L+2\pi\left(\frac{1}{\sqrt{\delta}}-1\right)
\]
holds for every $(S,\theta)\in\R\times[0,\pi]$.
\end{lemma}

\begin{proof}
Any lift of $\tilde a^{-1}\circ\tilde\Psi\circ\tilde a$ preserves the 2-form $\sin\theta \ ds\wedge d\theta$, maps each boundary component into itself, and commutes with the translation $(S,\theta)\mapsto (S+L,\theta)$.
Let $(s,\theta)\in\R/L\Z\times[0,\pi]$. By~\cite[Lemma 3.9]{ABHS} the geodesic arc $\phi_g^{[0,\tau_+(a(s,\theta))]}(a(s,\theta))$ is embedded. 
Denote by $\ell$ its length, and by $\eta=\eta(s,\theta) \in (0,L)$ the length of the arc $\alpha_+(a(s,\theta))$. 
In particular, $\eta$ is a continuous function of $(s,\theta)$.
Consider the convex geodesic polygon bounded by $\phi_g^{[0,\tau_+(a(s,\theta))]}(a(s,\theta))$ and $\alpha_+(a(s,\theta))$. By~\eqref{ineq Kli} we have
\begin{equation}
\label{length alpha+ 1}
2\pi\leq \eta+\ell\leq \dfrac{2\pi}{\sqrt{\delta}}.
\end{equation}
Similarly, we consider the convex geodesic polygon whose boundary consists of the arcs $\phi_g^{[0,\tau_+(a(s,\theta))]}(a(s,\theta))$ and $c\setminus\alpha_+(a(s,\theta))$. 
Observe that the length of $c\setminus\alpha_+(a(s,\theta))$ is $L-\eta\in(0,L)$. Again by~\eqref{ineq Kli}
\begin{equation}
\label{length alpha+ 2}
2\pi\leq L-\eta+\ell\leq\dfrac{2\pi}{\sqrt{\delta}}.
\end{equation}
Thus, considering the difference between~\eqref{length alpha+ 1} and~\eqref{length alpha+ 2}, one gets
\[  
\dfrac L 2 +\pi\left(1-\dfrac{1}{\sqrt{\delta}}\right)\leq \eta \leq \dfrac L 2 +\pi\left( \dfrac{1}{\sqrt{\delta}}-1 \right).
\]
We repeat the same argument, but now using the length $\nu=\nu(s,\theta)$ of the arc $\alpha_+(\phi_g^{\tau_+(a(s,\theta))}(a(s,\theta)))$, and add to obtain	\begin{equation}
\label{bound eta nu}
L  +2\pi\left(1-\dfrac{1}{\sqrt{\delta}}\right)\leq \eta+\nu \leq  L  +2\pi\left( \dfrac{1}{\sqrt{\delta}}-1 \right).
\end{equation}
The endpoints of the arc obtained by concatenating the arc $\alpha_+(a(s,\theta))$ with the arc $\alpha_+(\phi_g^{\tau_+(a(s,\theta))}(a(s,\theta)))$ are $c(s)$ and $c(\hat p_1\circ a^{-1}\circ\Psi\circ a(s,\theta))$, where we denote by $\hat p_1:\R/L\Z\times[0,\pi]\to \R/L\Z$ the projection onto the first coordinate. 
In particular, the $\R$-components of a lift of $\tilde a^{-1}\circ\tilde\Psi\circ\tilde a$ and of the identity map differ by $\eta+\nu+kL$, with some $k\in\Z$. Here we see $\eta$ and $\nu$ as continuous functions of $(S,\theta)$ that are $L$-periodic in~$S$. 
Choose then the lift $\bar\Psi$ such that $k=0$. From~\eqref{bound eta nu} we conclude that
\[
L  +2\pi\left(1-\dfrac{1}{\sqrt{\delta}}\right)\leq p_1\circ\bar\Psi(S,\theta)-S \leq  L  +2\pi\left( \dfrac{1}{\sqrt{\delta}}-1 \right)
\]
holds for every $(S,\theta)\in\R\times[0,\pi]$, as desired.
\end{proof}
}

According to~\cite[Appendix~B]{HSW}, let $(\Pi,\gamma_c\cup\hat{\gamma}_c)$ be the open book decomposition of $S^3$ such that $\tilde{A}_c$ is one of its pages. It holds that $$ H_1(S^3\setminus(\gamma_c\cup\hat{\gamma}_c)) \simeq H_1(\tilde{A}_c)\oplus \left< e \right> \simeq \Z \oplus \Z $$ where $e$ is a loop such that $\Pi_*e$ is the positive generator of $H_1(\R/\Z)$. In particular, we can choose $e$ so that $\link(e,\gamma_c)=0$ and $\link(e,\hat{\gamma}_c)=1$. Choose now a loop $f$ lying in $\tilde{A}_c \setminus (\gamma_c\cup\hat{\gamma}_c)$ which is a generator of $H_1(\tilde{A}_c)$. We can choose $f$ so that $\text{link}(f,\gamma_c)=1$ and $\text{link}(f,\hat{\gamma}_c)=-1$. See Figure \ref{generators}.

\begin{figure}[h]
	\centering
	\includegraphics[scale=0.15]{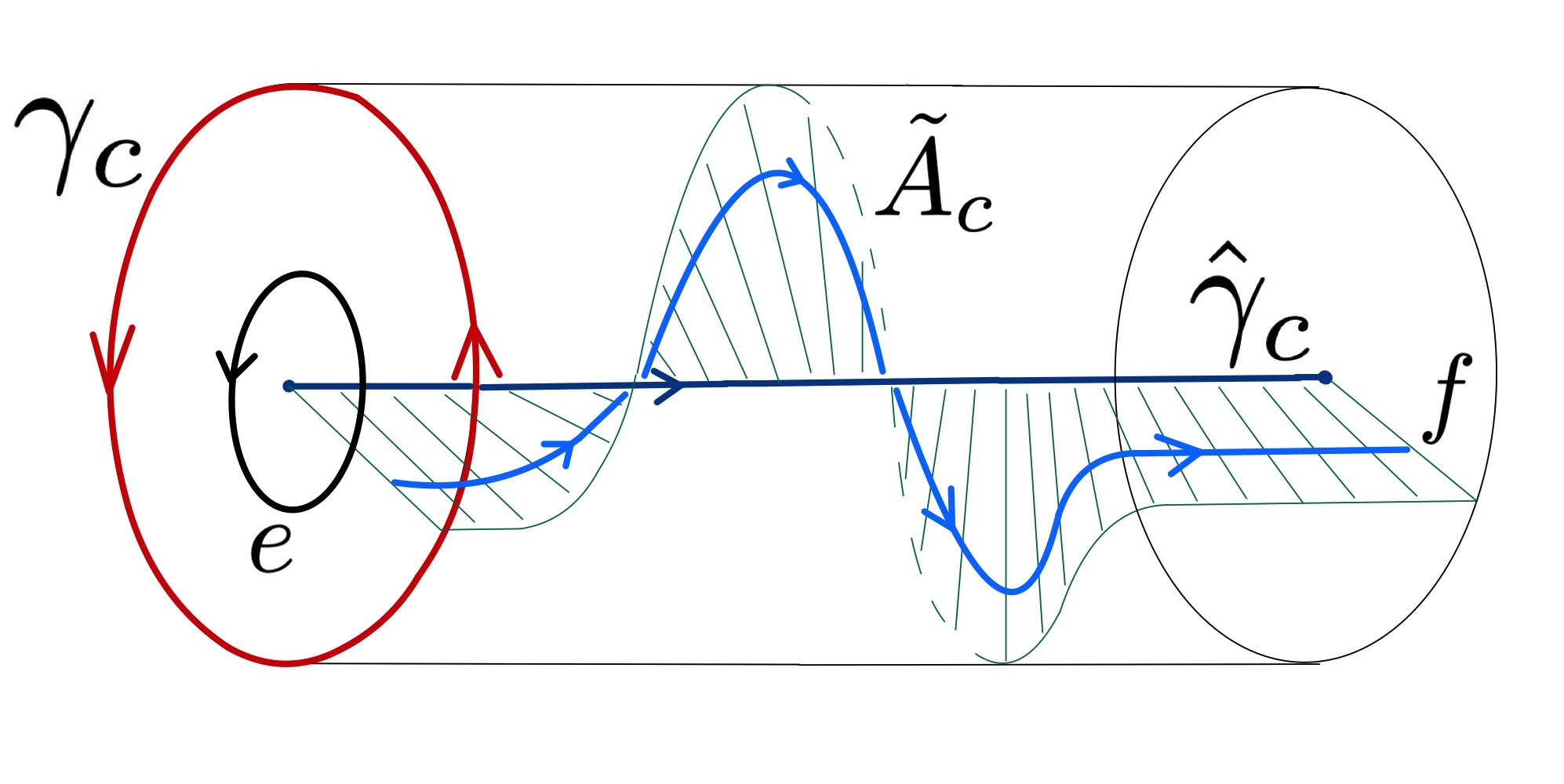}
	\caption{The chosen loops $e$ and $f$.}
	\label{generators}
\end{figure}

\begin{lemma}\label{lemma_f}
The loop $f$ is homotopic to the loop $s \in [0,2L] \mapsto \tilde{a} \circ P_\infty(-s,\theta_*)$ in $S^3 \setminus (\gamma_c\cup\hat{\gamma}_c)$, for any $\theta_* \in (0,\pi)$.
\end{lemma}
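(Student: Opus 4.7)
The plan is to show that $\tilde\beta(s) := \tilde{a}\circ P_\infty(-s,\theta_*)$, $s\in[0,2L]$, and $f$ have the same class in $H_1(S^3\setminus(\gamma_c\cup\hat\gamma_c))$, and then to invoke the fact that this homology group classifies free homotopy classes. The key topological input is that $\{\gamma_c,\hat\gamma_c\}$ is a Hopf link: by Lemma~\ref{lemma_unknots} both components are unknotted, and they cobound the embedded annulus $\tilde A_c$, forcing $\link(\gamma_c,\hat\gamma_c)=\pm 1$. Consequently $S^3\setminus(\gamma_c\cup\hat\gamma_c)\cong T^2\times\mathbb{R}$, whose fundamental group is abelian, so free homotopy classes of loops are in bijection with $H_1(S^3\setminus(\gamma_c\cup\hat\gamma_c))\cong\mathbb{Z}^2$ and are detected by the pair $(\link(\cdot,\gamma_c),\link(\cdot,\hat\gamma_c))$.

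Next, introduce the auxiliary loop $g(s):=\tilde a(s,\theta_*)$ for $s\in\mathbb{R}/2L\mathbb{Z}$. Because the first coordinate $-s\bmod 2L$ winds once around $\mathbb{R}/2L\mathbb{Z}$ in the negative direction as $s$ runs through $[0,2L]$, one has $[\tilde\beta]=-[g]$ in $H_1(S^3\setminus(\gamma_c\cup\hat\gamma_c))$. To compute $\link(g,\gamma_c)$, I would homotope $g$ within $\tilde A_c\setminus\gamma_c$, through the loops $\tilde a(\cdot,\theta)$ with $\theta\searrow 0$, to an arbitrarily small push-off $\gamma_c^+$ of $\gamma_c$ in the direction $\partial_\theta\tilde a(\cdot,0)$. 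Projecting via $D_g$, this pushing vector equals $(\dot c(s)^\bot)^{\mathrm{vert}}$, which is precisely the first vector of the geodesic frame $\sigma_g$ and therefore a nowhere-vanishing section of $\xi|_{\gamma_c}$. Hence the Birkhoff annulus framing along $\gamma_c$ is a contact framing, and $\link(g,\gamma_c)=\link(\gamma_c^+,\gamma_c)$ equals the contact self-linking number of $\gamma_c$, which is $-1$ by Lemma~\ref{lemma_unknots}.

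To pin down the linking with the other boundary, apply Lemma~\ref{lemma_cohom_disk_versus_annulus} to a small push-off $g^+$ of $g$ disjoint from $\tilde A_c$ (available since $\tilde A_c$ is embedded and $g\subset\tilde A_c$): this gives $\link(g,\gamma_c)+\link(g,\hat\gamma_c)=\mathrm{int}(g^+,\tilde A_c)=0$, so $\link(g,\hat\gamma_c)=1$. Flipping signs, $\link(\tilde\beta,\gamma_c)=1$ and $\link(\tilde\beta,\hat\gamma_c)=-1$, matching the defining linking numbers of $f$. By the first paragraph, $\tilde\beta$ and $f$ are therefore freely homotopic in $S^3\setminus(\gamma_c\cup\hat\gamma_c)$.

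The delicate point I expect to require the most care is the middle step: identifying the Birkhoff annulus framing along $\gamma_c$ with a contact framing. One must check both that $\partial_\theta\tilde a(\cdot,0)$ is a section of $\xi|_{\gamma_c}$ (which follows from the explicit form $a(s,\theta)=(c(s),\cos\theta\,\dot c(s)+\sin\theta\,\dot c(s)^\bot)$ and the fact that the geodesic frame $\sigma_g$ trivialises the contact structure), and that the resulting self-linking computation is independent of the particular section chosen. The latter is automatic because $\xi|_{\gamma_c}$ is a trivial $\mathbb{R}^2$-bundle over $S^1$, so any two nowhere-vanishing sections are homotopic through such sections and yield the same linking number for the push-off. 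With sign conventions fixed by taking $\gamma_c$ oriented by the Reeb flow (consistent with Lemma~\ref{lemma_unknots}), this gives $\mathrm{sl}(\gamma_c)=-1$ as used above.
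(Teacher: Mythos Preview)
Your argument is correct and reaches the same conclusion as the paper, but the route is more elaborate. Both proofs begin by noting that free homotopy classes in $S^3\setminus(\gamma_c\cup\hat\gamma_c)$ are detected by the pair of linking numbers. The paper then computes these linking numbers directly: since $\tilde a\circ P_\infty(-s,0)$ parametrises $-\gamma_c$ and $\tilde a\circ P_\infty(-s,\pi)$ parametrises $\hat\gamma_c$, the loop $\beta$ is homotopic to $-\gamma_c$ in $S^3\setminus\hat\gamma_c$ and to $\hat\gamma_c$ in $S^3\setminus\gamma_c$, giving $\link(\beta,\hat\gamma_c)=\link(-\gamma_c,\hat\gamma_c)=-1$ and $\link(\beta,\gamma_c)=\link(\hat\gamma_c,\gamma_c)=1$. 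Your detour through the contact self-linking number and Lemma~\ref{lemma_cohom_disk_versus_annulus} works but is unnecessary; the paper's approach avoids any discussion of framings.

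Two small points deserve care. First, your claim that ``any two nowhere-vanishing sections are homotopic through such sections'' is false: nonvanishing sections of a trivial plane bundle over $S^1$ are classified by their winding number in $\Z$. What saves your computation is that the section $\partial_\theta\tilde a(\cdot,0)$ you use comes from the \emph{global} frame $\tilde\sigma_g$ on $S^3$, hence extends over a Seifert disk for $\gamma_c$, which is exactly the condition needed for the push-off to compute $\mathrm{sl}(\gamma_c)$. Second, ``two unknots cobounding an embedded annulus'' does not force a Hopf link (twisted bands give arbitrary linking numbers); the cleaner justification, implicit in the paper, is that $\gamma_c\cup\hat\gamma_c$ is the binding of an open book with annulus pages, which in $S^3$ forces the Hopf link and hence abelian $\pi_1$ of the complement.
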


\begin{proof}
The free homotopy classes of loops in $S^3 \setminus (\gamma_c\cup\hat{\gamma}_c)$ are determined by the linking numbers with $\gamma_c$ and $\hat{\gamma}_c$. Note that $\tilde{a} \circ P_\infty(-s,0)$ is $-\gamma_c$, and that $\tilde{a} \circ P_\infty(-s,\pi)$ is $\hat{\gamma}_c$, where $s\in[0,2L]$. Hence the loop $\beta : s \in [0,2L] \mapsto \tilde{a} \circ P_\infty(-s,\theta_*)$ is homotopic to $-\gamma_c$ in $S^3\setminus\hat{\gamma}_c$, and is homotopic to $\hat{\gamma}_c$ in $S^3\setminus\gamma_c$. We conclude that $\link(\beta,\hat{\gamma}_c) = \link(-\gamma_c,\hat{\gamma}_c) = -1$ and $\link(\beta,{\gamma}_c) = \link(\hat\gamma_c,{\gamma}_c) = 1$. Hence $\beta$ and $f$ are homotopic in $S^3 \setminus (\gamma_c\cup\hat{\gamma}_c)$. 
\end{proof}

Let us introduce some notation needed for the statement of Lemma~\ref{lemma_horizontal_displacement} below. Let $T>0$ and $x \in S^3\setminus(\gamma_c\cup\hat{\gamma}_c)$, and consider the loop $k(T,x;\hat D)$ in $S^3\setminus(\gamma_c\cup\hat{\gamma}_c)$. There is a choice of closing path in $\hat D \setminus \hat\gamma_c$ which is not made explicit in the notation. Assume that this closing path does not intersect $\gamma_c$, and denote
\begin{equation}\label{notation_M_N}
M(T,x) = \text{link}(k(T,x;\hat D),\hat\gamma_c), \qquad N(T,x) = \text{link}(k(T,x;\hat D),\gamma_c).
\end{equation}
Note that $M$ depends only on $(T,x)$ and that, in contrast, $N$ highly depends on the choice of closing path; again we do not make this dependence explicit in the notation, for simplicity. By Lemma~\ref{lemma_cohom_disk_versus_annulus} we write in  $H_1(S^3\setminus(\gamma_c\cup\hat{\gamma}_c))$
\begin{equation}\label{coordinates ok k(T,x,D)}
k(T,x;\hat D)=(M(T,x)+N(T,x)) e + N(T,x) f.
\end{equation}
Let {$\underline t \geq 0$} and $\underline x \in \tilde{A}_c \setminus (\gamma_c\cup\hat{\gamma}_c)$ be defined by
\begin{equation}\label{special_point_and_time}
\underline t = \min \{ t \geq 0 \mid \phi^{t+t^{\hat D}_-(x)}(x) \in \tilde A_c \}, \qquad \underline x = \phi^{\underline t + t^{\hat D}_-(x)}(x).
\end{equation}
Note that $\underline t$ and $\underline x$ depend only on $x$, {$\tilde A_c$} and $\hat D$.

\begin{lemma}
\label{lemma_horizontal_displacement}
{Suppose that $\delta > 1/4$.} There exists $C\geq0$, {which depends only on $\hat D$ and $\tilde A_c$}, with the following significance. 
Let $T>0$ and $x \in S^3\setminus(\gamma_c\cup\hat{\gamma}_c)$ be arbitrary, and let $(s_0,\theta_0)\in\R\times[0,\pi]$ satisfy $\tilde{a}\circ P_\infty(s_0,\theta_0) = \underline x$. Then there exists {a closing path~$\alpha$ in $\hat D \setminus \hat\gamma_c$} such that $$ \left| \dfrac{p_1\circ\bar{\Psi}^{M(T,x)+N(T,x)}(s_0,\theta_0)-s_0}{2L} - M(T,x) \right| \leq C $$ where $p_1:\R\times[0,\pi]\to\R$ is the projection onto the first coordinate, {and such that the conclusion of Lemma~\ref{lemma_application_gen_pos} holds, i.e., $$ | {\rm int}(k_{\alpha}(T,x;\hat D),\tilde A_c) - \#\{t\in[0,T]\mid \phi^t(x) \in \tilde A_c\} | \ \leq \ C \, . $$} 
%
\end{lemma}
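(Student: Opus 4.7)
The plan is to exploit the freedom in choosing the closing path inside $\hat D\setminus\hat\gamma_c$ to adjust $N(T,x)$, and then to finish by a monotonicity argument based on Corollary~\ref{corollary lift length}. First I would prove that $N(T,x)$ takes every integer value as the closing path is varied, while $M(T,x)$ remains fixed. Since $\link(\gamma_c,\hat\gamma_c)=1$ (an integer invariant preserved under continuous deformation of the metric from the round case, where it comes from the Hopf linking), the orbit $\gamma_c$ meets the Seifert disk $\hat D$ transversely in a single point $p$, so $\hat D\setminus(\hat\gamma_c\cup\{p\})$ is a once-punctured open disk with $\pi_1\cong\Z$ generated by a small positively oriented loop $\mu$ encircling $p$. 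As a loop in $S^3\setminus(\gamma_c\cup\hat\gamma_c)$, $\mu$ is the positive meridian of $\gamma_c$, so $\link(\mu,\gamma_c)=1$ and $\link(\mu,\hat\gamma_c)=0$; writing $[\mu]=ae+bf$ in the basis of Lemma~\ref{lemma_f} and using~\eqref{coordinates ok k(T,x,D)} forces $a=b=1$, i.e. $[\mu]=e+f$. Therefore adding one positive winding around $p$ to the closing path changes $[k(T,x;\hat D)]$ by $e+f$, leaving $M$ unchanged while incrementing both $N$ and $M+N$ by one; every integer value of $N$ is thus realised.

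Next I would set $\epsilon:=2\pi(1/\sqrt\delta-1)$ and introduce
\[
Q(N):=\frac{p_1\circ\bar\Psi^{M+N}(s_0,\theta_0)-s_0}{2L}-M,
\]
viewed as a function of $N\in\Z$ (with $M$ and $s_0$ fixed). Applying Corollary~\ref{corollary lift length} at the point $\bar\Psi^{M+N}(s_0,\theta_0)$ gives
\[
Q(N+1)-Q(N)\in\left[\tfrac{L-\epsilon}{2L},\ \tfrac{L+\epsilon}{2L}\right].
\]
Under the running assumption $\delta>4/9$ one has $1/\sqrt\delta<3/2$ and hence $\epsilon<\pi$; combined with Klingenberg's bound $L\geq 2\pi$ from Theorem~\ref{Kli}, this yields $\epsilon<L/2$, so both endpoints of the interval above lie in $(0,1)$. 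In particular $Q\colon\Z\to\R$ is strictly increasing with step size strictly smaller than one.

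The proof then concludes by pigeonhole: since the lower bound $(L-\epsilon)/(2L)>0$ on the step size makes $Q$ unbounded both above and below, there is a unique integer $N$ with $Q(N)\leq 0<Q(N+1)$, and one of the two non-negative numbers $-Q(N),Q(N+1)$ (whose sum is less than $1$) is strictly less than $\tfrac12$. Choosing the closing path that realises the corresponding $N$, whose existence is guaranteed by the first paragraph, proves the lemma with $C=\tfrac12$; in fact any $C\geq(L+\epsilon)/(4L)$ suffices, so $C$ depends only on $L$ and $\epsilon$ and hence only on $\hat D$ and $\tilde A_c$. The main obstacle I anticipate lies in the opening step, namely carefully verifying that $\gamma_c\cap\hat D$ consists of a single transverse point and that the meridian $\mu$ represents the class $e+f$ with the right signs relative to the orientation conventions fixed before Lemma~\ref{lemma_f}; once $N$ is truly free as an integer parameter, the remainder of the argument is an elementary monotonicity computation that makes no further geometric use of $\hat D$ or $\tilde A_c$ beyond Corollary~\ref{corollary lift length}.
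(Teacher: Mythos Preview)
Your argument is correct and considerably simpler than the paper's. You exploit the freedom in the closing path to realise every integer value of $N$ (using that $\gamma_c\cap\hat D$ is a single positive transverse point, which follows since $\hat D$ is a global surface of section and $\link(\gamma_c,\hat\gamma_c)=1$), and then use the uniform step-size bound from Corollary~\ref{corollary lift length} to pigeonhole onto an $N$ with $|Q(N)|<1$. This proves the lemma as literally stated, with a uniform constant depending only on the pinching.

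The paper takes a genuinely different route: it proves the inequality for the \emph{specific} closing path supplied by Lemma~\ref{lemma_application_gen_pos}. It follows the trajectory through its hits with $\tilde A_c$, shows by a geometric homotopy argument (Claim~1) that each return-to-return loop $\nu_i$ has $\link(\nu_i,\hat\gamma_c)=0$, and deduces that the horizontal displacement after $m(T,x)$ hits equals $2L\,M(T,x)+O(1)$. The payoff is that the paper's $N$ satisfies $M+N=m(T,x)+O(1)$, so $M+N$ is close to the \emph{actual number of hits} with $\tilde A_c$. This extra information is silently used just after the lemma: in~\eqref{1st_fraction} the quantity ${\rm int}(k(T,x;\hat D),\tilde A_c)=M+N$ is bounded above by $T/\tau_{\min}(\tilde A_c)$ via Lemma~\ref{lemma_application_gen_pos}, which requires the \emph{same} closing path in both lemmas. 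Your $N$ is determined only by the displacement condition, not by the hit count, so plugging your proof into the rest of Section~\ref{sec_proof_pinched} would force you either to rework the bound on $\Delta\Theta/(M+N)$ or to prove separately that your $N$ agrees with the paper's up to $O(1)$---which is true, but seeing it essentially reproduces Claims~1--2.
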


\begin{proof}
Denote $p = \phi^{t^{\hat D}_-(x)}(x)$. 
Consider $$ m(T,x) = \# \{ t\in[0,T+t^{\hat D}_+(\phi^T(x))-t^{\hat D}_-(x)] \mid \phi^{t}(p) \in \tilde{A}_c \}. $$ For each $i = 0,\dots,m(T,x)-1$ let $t_i$ be defined by $$ \phi^{t_i}(p) \in \tilde{A}_c, \quad 0\leq t_0 = \underline{t} < t_1 < \dots < t_{m(T,x)-1} \leq T + t^{\hat D}_+(\phi^T(x)) - t^{\hat D}_-(x). $$ If $(s_0,\theta_0) \in \R \times [0,\pi]$ satisfies $\tilde a \circ P_\infty(s_0,\theta_0) = \underline{x}$ and $(s_i,\theta_i) := \bar{\Psi}^i(s_0,\theta_0)$ then we have $$ \tilde{a} \circ P_\infty (s_i,\theta_i) = \phi^{t_i}(p) $$ { and by Lemma~\ref{lemma_new_estimate_pinching} $$ s_{i+1} - s_i > 0 $$} for every $i = 0,\dots,m(T,x)-1$. Let $k \in \Z$ be such that
\begin{equation}\label{defi p}
k-1 < \dfrac{s_{m(T,x)-1}-s_0}{2L} \leq k.
\end{equation}

We introduce a path $\varepsilon:[0,1]\to\R\times[0,\pi]$ from $\varepsilon(0) = (s_{m(T,x)-1},\theta_{m(T,x)-1})$ to $\varepsilon(1)=(s_0+2Lk,\theta_0)$ defined in the following way:
\begin{equation*}
\varepsilon(\tau) = 
\begin{cases}
(s_{m(T,x)-1}+2\tau(s_0+2Lk-s_{m(T,x)-1}),\theta_{m(T,x)-1}) & 0 \leq \tau \leq 1/2, \\
(s_0+2Lk,\theta_{m(T,x)-1}+(2\tau-1)(\theta_0-\theta_{m(T,x)-1})) & 1/2 \leq \tau \leq 1.
\end{cases}
\end{equation*}
See Figure~\ref{Fig Eps}.

\begin{figure}[h]
	\begin{center}
\includegraphics[scale=0.5]{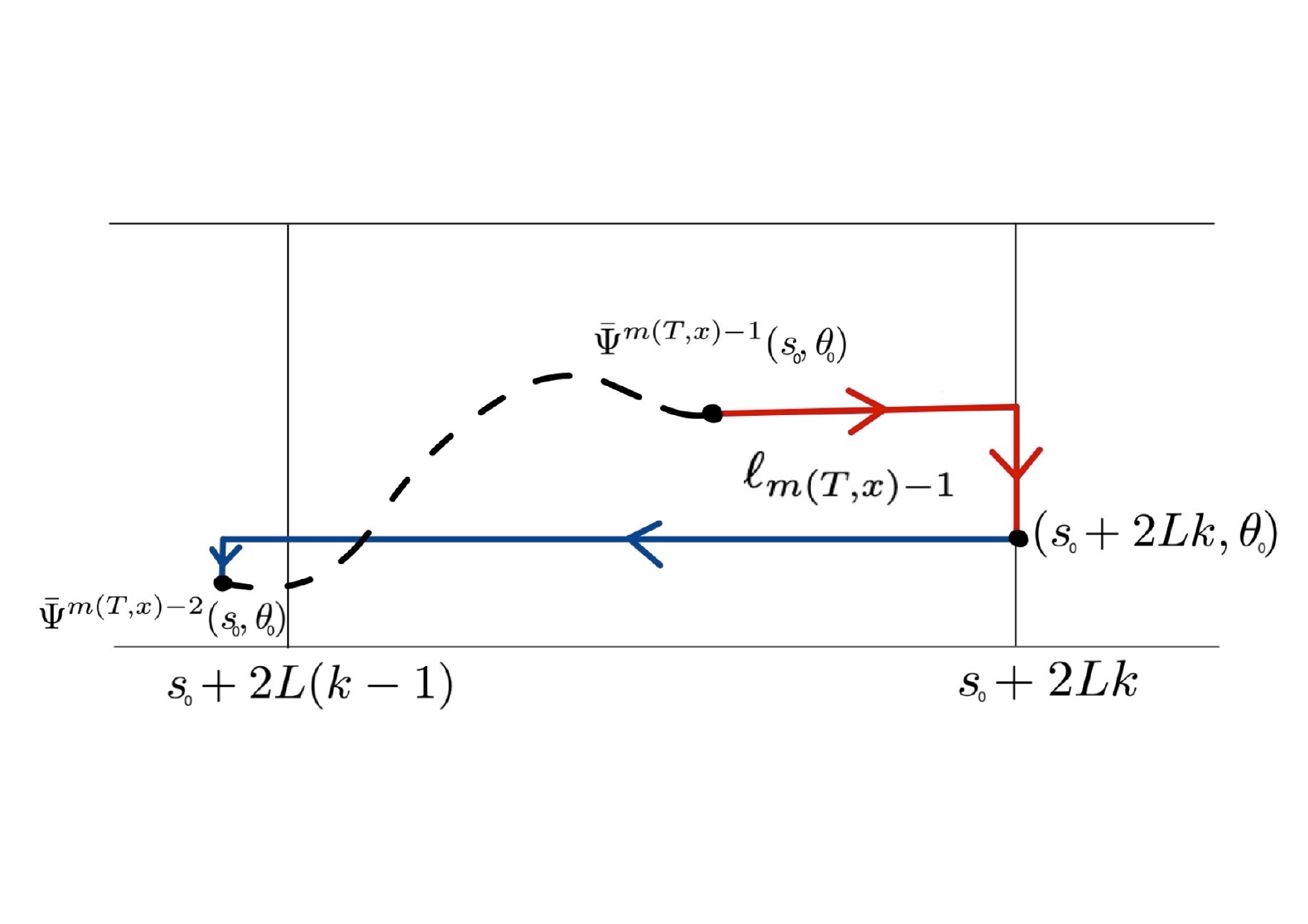}
\caption{The path $\ell_{m(T,x)-1}$.}
\label{Fig Eps}
\end{center}
\end{figure}

For every $i=1,\dots,m(T,x)-1$ construct a path $\ell_i : [0,1] \to \R\times[0,\pi]$ as follows. If $i \leq m(T,x)-2$ then
\begin{equation*}
\ell_i(\tau) = 
\begin{cases}
(s_i+2\tau(s_{i-1}-s_i),\theta_i) & 0 \leq \tau \leq 1/2, \\
(s_{i-1},\theta_i+(2\tau-1)(\theta_{i-1}-\theta_i)) & 1/2 \leq \tau \leq 1.
\end{cases}
\end{equation*} 
The path $\ell_{m(T,x)-1}$ is defined similarly, but following $\varepsilon$ with a path from $\varepsilon(1)=(s_0+2Lk,\theta_0)$ to $(s_{m(T,x)-2},\theta_{m(T,x)-2})$, see Figure~\ref{Fig Eps}. Note that $\ell_i$ connects $(s_i,\theta_i)$ to $(s_{i-1},\theta_{i-1})$.

\begin{figure}[h]
	\centering
\includegraphics[scale=0.7]{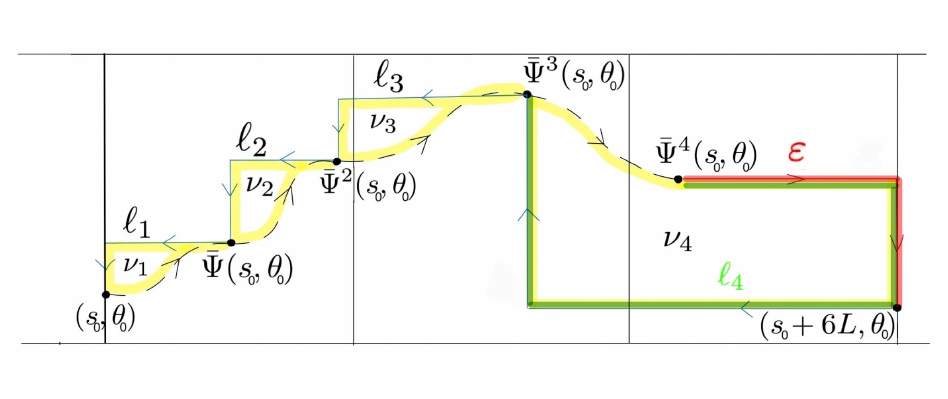}
\caption{An example of the paths $\ell_i,\nu_i$ and $\varepsilon$.}
\label{building paths}
\end{figure}

Denote by $\mathcal{L}$ the concatenation of the path $\tau\mapsto \varepsilon(1-\tau)$ with all the paths $\ell_i$, $i=1,\dots,m(T,x)-1$. By Lemma~\ref{lemma_f}, the path $\tilde{a}\circ P_{\infty}(\mathcal{L})$ is a loop homotopic to $kf$ in $S^3\setminus(\gamma_c\cup\hat{\gamma}_c)$.
{Recall $p = \phi^{t^{\hat D}_-(x)}(x)$. For $i=1,\dots,m(T,x)-1$ denote by $\nu_i$ the loop $$ \nu_i = \phi^{[t_{i-1},t_i]}(p)+\tilde{a}\circ P_{\infty}(\ell_i). $$} See Figure~\ref{building paths}. 
In $S^3\setminus(\gamma_c\cup\hat{\gamma}_c)$, since $\tilde{a}\circ P_{\infty}(\mathcal{L})$ is homotopic to $kf$, we have that $$ \phi^{[t_0,t_{m(T,x)-1}]}(p)+\tilde{a}\circ P_{\infty}(\varepsilon)+kf $$ is homologous to $$ \nu_1+\dots+\nu_{m(T,x)-1}. $$ This implies
\begin{equation}\label{calculus_link}
\begin{aligned}
& \link(\phi^{[t_0,t_{m(T,x)-1}]}(p)+\tilde{a}\circ P_{\infty}(\varepsilon),\hat{\gamma}_c) - k \\
& \qquad = \link(\phi^{[t_0,t_{m(T,x)-1}]}(p)+\tilde{a}\circ P_{\infty}(\varepsilon)+kf,\hat{\gamma}_c) \\
& \qquad = \sum_{i=1}^{m(T,x)-1} \link(\nu_i,\hat{\gamma}_c).
\end{aligned}
\end{equation}

\noindent {\sc Claim 1.} For every $i=1,\dots,m(T,x)-1$ it holds $\text{link}(\nu_i,\hat{\gamma}_c)=0$. \\

\noindent {\it Proof of Claim 1.} 
The loop $\nu_i\subset S^3 \setminus (\gamma_c\cup\hat\gamma_c)$ is made up of a trajectory of the lifted geodesic flow $\phi^t$ and a path contained in the annulus $\tilde{A}_c$. Consider the loop $D_g(\nu_i)\subset T^1_gS^2$, see Figure \ref{homotopy}. This loop consists of the velocity vectors of a geodesic starting at (and transversely to) $c$ up to the second hitting point with $c$ (first hit with $A_c$), and then of velocity vectors obtained by parallel transport back the initial point, and then of a vertical deformation. In particular, $D_g(\nu_i)$ does not intersect the set $\dot c \cup (-\dot c) = D_g(\gamma_c\cup\hat\gamma_c)$. The path $D_g(\nu_i)$ can be deformed in a continuous way into a loop $\Gamma$ contained in~$\dot c$ as depicted in Figure \ref{homotopy}: the initial velocity vector of $\pi \circ D_g(\nu_i)$ becomes more and more positively tangent to $c$, the second hitting point converges to second conjugate point, and one parallel transport back by vectors positively tangent to $c$. Such a deformation can be realised within $T^1_gS^2\setminus(-\dot c)$. Moreover, observe that the loop $\Gamma \subset \dot c$ is homotopic to a point in $\dot c$. Thus, $D_g(\nu_i)$ is contractible to a point in $T^1_gS^2\setminus(-\dot c)$. By the homotopy lifting property, also the loop $\nu_i$ is contractible to a point in $S^3\setminus\hat\gamma_c$.
Hence $\link(\nu_i,\hat\gamma_c)=0$ and Claim~1 is proved. \\

\begin{figure}[h]
\begin{center}
\includegraphics[scale=0.15]{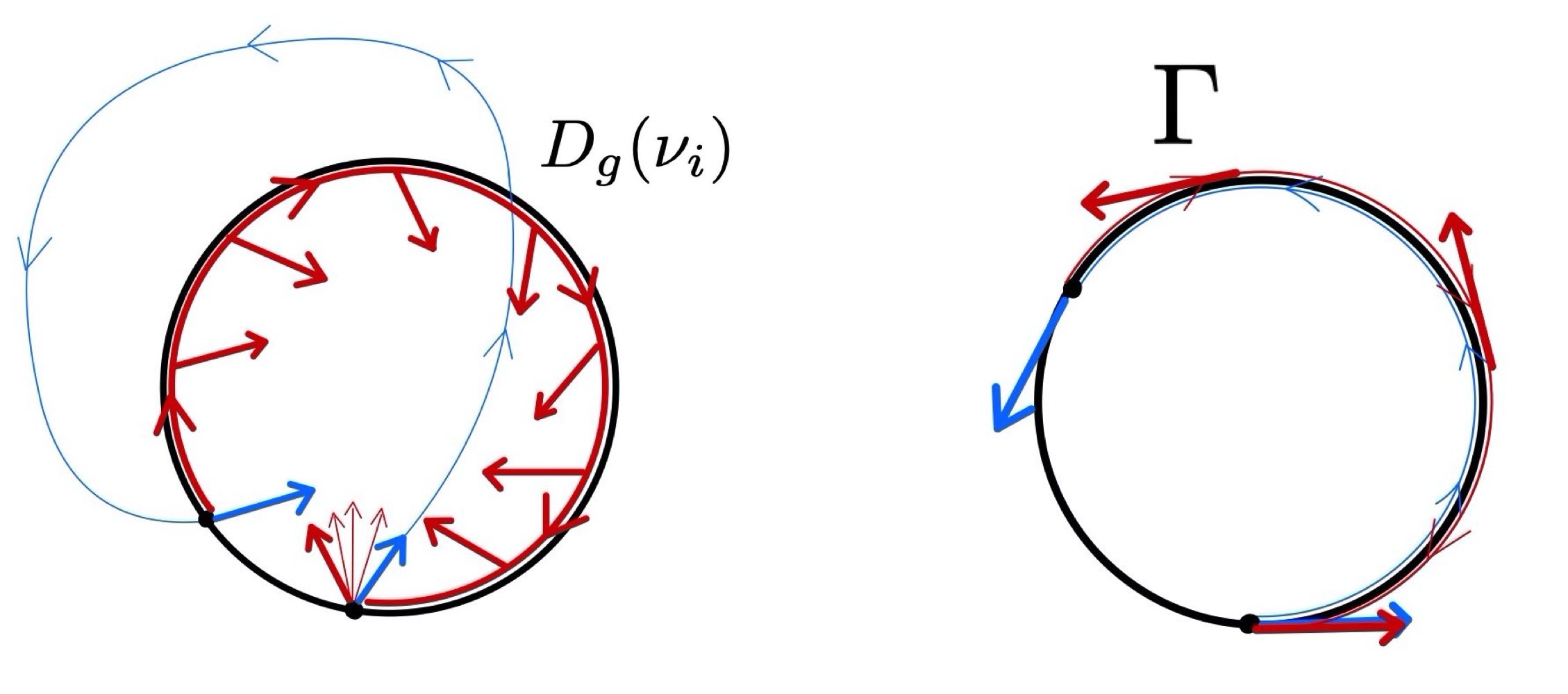}
\caption{The projection through $D_g$ of the loop $\nu_i$ can be deformed into a loop contained in $\dot c$.}
\label{homotopy}
\end{center}
\end{figure}

From~\eqref{calculus_link} and Claim~1 we deduce that
\begin{equation}\label{key relation}
\text{link}(\phi^{[t_0,t_{m(T,x)-1}]}(p)+\tilde{a}\circ P_{\infty}(\varepsilon),\hat{\gamma}_c)=k.
\end{equation}

\noindent {\sc Claim 2.} $\link(\phi^{[t_0,t_{m(T,x)-1}]}(p)+\tilde{a}\circ P_{\infty}(\varepsilon),\hat{\gamma}_c) = M(T,x) + O(1)$. \\

\noindent {\it Proof of Claim 2.} 
Recall that $k(T,x;\hat D)$ was defined as the concatenation of the piece of trajectory $\phi^{[0,T+t^{\hat D}_+(\phi^T(x))-t^{\hat D}_-(x)]}(p)$ with a path $\alpha\subset \hat D$, { called a closing path}. Hence
\begin{equation*}
\begin{aligned}
M(T,x) &= \link(k(T,x;\hat D),\hat{\gamma}_c) \\
&= \# \{ t\in [0,T+t^{\hat D}_+(\phi^T(x))-t^{\hat D}_-(x)] \mid \phi^t(p) \in \hat D \} - 1.
\end{aligned}
\end{equation*}
Now note that 
\begin{equation*}
\begin{aligned}
& \link(\phi^{[t_0,t_{m(T,x)-1}]}(p)+\tilde{a}\circ P_{\infty}(\varepsilon),\hat{\gamma}_c) \\
& \qquad = \# \{ t\in [t_0,t_{m(T,x)-1}] \mid \phi^t(p) \in \hat D \} + Q
\end{aligned}
\end{equation*}
where $Q\in\Z$ satisfies
\begin{equation*}
|Q| \leq \limsup_{\beta \to \tilde{a} \circ P_\infty(\varepsilon)} {\rm int}(\beta,\hat D) + 2
\end{equation*}
where the $\limsup$ is taken with respect to smooth and $C^0$-small perturbations $\beta$ of $\tilde{a} \circ P_\infty(\varepsilon)$, with endpoints fixed, that are transverse to $\hat D$ (up to end points), as $\beta$ $C^0$-converges to $\tilde{a} \circ P_\infty(\varepsilon)$.
Clearly this $\limsup$ exists. Moreover, it is bounded from above by a constant independent of $(T,x)$ since the horizontal translation of $\varepsilon$ in $\R\times[0,\pi]$ is bounded by the constant $2L$, which is independent of $(T,x)$. It follows from all of the above that 
\begin{equation}\label{general control}
\begin{aligned}
& \vert \link(\phi^{[t_0,t_{m(T,x)-1}]}(p) + \tilde{a}\circ P_{\infty}(\varepsilon),\hat{\gamma}_c) - \link(k(T,x;\hat D),\hat{\gamma}_c) \vert \\
& \qquad \leq \# \{ t\in [0,t_0) \mid \phi^t(p) \in \hat D \} \\
& \qquad \qquad + \# \{ t\in (t_{m(T,x)-1},T+t^{\hat D}_+(\phi^T(x))-t^{\hat D}_-(x)] \mid \phi^t(p) \in \hat D \} \\
& \qquad \qquad + \limsup_{\beta \to \tilde{a} \circ P_\infty(\varepsilon)} {\rm int}(\beta,\hat D) + { 3}. 
\end{aligned}
\end{equation}
Since $\hat D$ is a strong global surface of section, we get estimates of the form
\begin{equation*}
\# \{ t\in[a,b] \mid \phi^t(y) \in \hat D \} \leq \frac{b-a}{\tau_{\min}} + 1
\end{equation*}
uniformly in $y \in S^3 \setminus \hat{\gamma}_c$ and in $[a,b] \subset \R$. Here $\tau_{\min} > 0$ denotes the infimum of the return time on $\hat D$. Finally note that the lengths of the intervals $[0,t_0]$ and $[t_{m(T,x)-1},T+t^{\hat D}_+(\phi^T(x))-t^{\hat D}_-(x)]$ are bounded from above by the supremum of the return times on $\hat D$ and on $\tilde{A}_c$. Hence~\eqref{general control} is bounded from above by a constant independent of $(T,x)$, and the proof of Claim~2 is complete. \\

\noindent {\sc Claim 3.} There exists $C\geq0$ independent of $(T,x)$ such that the inequality $|m(T,x)-M(T,x)-N(T,x)| \leq C$ holds for {the closing path chosen in  Lemma~\ref{lemma_application_gen_pos}} defining $k(T,x;\hat D)$. \\

\noindent {\it Proof of Claim 3.} 
Note that $\tilde A_c$ is a $\partial$-strong global surface of section. This is true by positivity of the curvature. 
Hence, the return time back to the interior of $\tilde A_c$ is bounded away from zero and bounded from above. 
We find $\rho>0$ depending only on $\tilde A_c$ such that $\# \{ t\in[a,b] \mid \phi^t(q) \in \tilde A_c \} \leq \rho(b-a)$ holds for every $a<b$ and $q \in S^3 \setminus \partial \tilde A_c$. 
Now note that $\|t^{\hat D}_\pm\|_{L^\infty} < \infty$. This is true because $\hat D$ is $\partial$-strong.
With $L = \max \{ \|t^{\hat D}_-\|_{L^\infty} , \|t^{\hat D}_+\|_{L^\infty} \}$ we get 
\begin{equation*}
\begin{aligned}
0 &\leq m(T,x) - \# \{ t \in [0,T] \mid \phi^t(x) \in \tilde A_c \} \\
& \leq \# \{ t \in [t^{\hat D}_-(x),0] \mid \phi^t(x) \in \tilde A_c \} \\
& \qquad + \# \{ t \in [0,t^{\hat D}_+(\phi^T(x))] \mid \phi^{t+T}(x) \in \tilde A_c \} \\
& \leq 2L\rho.
\end{aligned}
\end{equation*}
By Lemma~\ref{lemma_cohom_disk_versus_annulus} we have $$ M(T,x)+N(T,x) = {\rm int}(k(T,x;\hat D),\tilde A_c), $$ and by Lemma~\ref{lemma_application_gen_pos} there exists $C'\geq0$ independent of $(T,x)$ such that $$ |\# \{ t \in [0,T] \mid \phi^t(x) \in \tilde A_c \}-\text{int}(k(T,x;\hat D),\tilde{A}_c)| \leq C' $$ holds for some particular choice of closing path. The proof is complete if we set $C = C' + 2L\rho$. \\

By {\it Claim 2} and~\eqref{key relation} $$ M(T,x) = k + O(1) $$ and~\eqref{defi p} gives $$ k = \frac{p_1\circ\bar\Psi^{m(T,x)-1}(s,\theta)-s}{2L} + O(1). $$ Thus $$ \left|M(T,x) - \frac{p_1\circ\bar\Psi^{m(T,x)-1}(s,\theta)-s}{2L} \right| = O(1) $$ Up to now the constants $O(1)$ are independent of $(T,x)$ and also of choice of closing paths. By {\it Claim 3} we find $$ \left| m(T,x) - M(T,x) - N(T,x) \right| = O(1) $$ where this last constant is still independent of $(T,x)$, although it depends on the particular choice of closing path. Hence $$ \left| \frac{p_1\circ\bar\Psi^{m(T,x)-1}(s,\theta)-s}{2L} - \frac{p_1\circ\bar\Psi^{M(T,x) + N(T,x)}(s,\theta)-s}{2L} \right| = O(1). $$ All of this implies that $$ \left| M(t,x) - \frac{p_1\circ\bar\Psi^{M(T,x)+N(T,x)}(s,\theta)-s}{2L} \right| = O(1) $$ and the proof is complete.
\end{proof}

We now complete the proof of Theorem~\ref{thm_main_0}. 
{With $x \in S^3 \setminus (\gamma_c\cup\hat{\gamma}_c)$ and $T>0$ arbitrary, below we shall write $k(T,x;\hat D)$ to denote a loop obtained with a closing path given by Lemma~\ref{lemma_horizontal_displacement}.}
Combining {Lemma~\ref{lemma_new_estimate_pinching}} 
with Lemma~\ref{lemma_horizontal_displacement}
\begin{equation}
\label{2nd_fraction}
\begin{aligned}
\frac{{\rm int}(k(T,x;\hat D),\tilde A_c)}{\link(k(T,x;\hat D),\hat\gamma_c)} &= \frac{M(T,x)+N(T,x)}{M(T,x)} \\
&= \frac{M(T,x)+N(T,x)}{\dfrac{p_1\circ\bar{\Psi}^{M(T,x)+N(T,x)}(s,\theta)-s}{2L} + O(1)} \\
&\geq \dfrac{2L}{L+2\pi\left(\dfrac{1}{\sqrt{\delta}}-1\right)} + O\left(\frac{1}{M(T,x)+N(T,x)}\right) \, .
\end{aligned}
\end{equation} 
On the other hand by Lemma~\ref{lemma_angular_twist_curvature} and Lemma~\ref{lemma_horizontal_displacement}
\begin{equation}
\label{1st_fraction}
\frac{\Delta\Theta(T,u)}{{\rm int}(k(T,x;\hat D),\tilde A_c)} \geq \frac{T\delta}{\frac{T}{\tau_{\min}} + O(1)} = \delta \tau_{\min} + O\left(\frac{1}{T}\right)
\end{equation}
where $\tau_{\min}$ denotes the infimum of the return time function of $\tilde A_c$. 
By Lemma~\ref{lemma_horizontal_displacement} there is $\chi>0$ independent of $x$ such that $\chi^{-1}T \geq M(T,x)+N(T,x) \geq \chi T$.
Hence, $$ O\left(\frac{1}{M(T,x)+N(T,x)}\right) = O\left(\frac{1}{T}\right) \, . 
$$ 
Plugging this together with~\eqref{2nd_fraction} and~\eqref{1st_fraction} we finally arrive at 
\begin{equation}
\label{ineq_wrapping_up}
\frac{\Delta\Theta(T,u)}{\link(k(T,x;\hat D),\hat\gamma_c)} \geq \delta \tau_{\min} \ \dfrac{2L}{L+2\pi\left(\dfrac{1}{\sqrt{\delta}}-1\right)} \ + O \left( \frac{1}{T} \right).
\end{equation}

Let us consider an auxiliary constant $\mu > 1$ and look for $0<\delta<1$ such that
\begin{equation*}
\dfrac{2L}{L+2\pi\left(\dfrac{1}{\sqrt{\delta}}-1\right)} > \frac{\mu}{\delta} \qquad \text{that is}\qquad L(2\delta-\mu) > 2\pi \mu \left(\dfrac{1}{\sqrt{\delta}}-1\right).
\end{equation*}
Since by~\eqref{ineq Kli} we have $L\geq 2\pi$, it suffices to ask for
\begin{equation*}
\mu < 2 \delta \sqrt{\delta}
\end{equation*}
Under this condition on $\mu$, by~\eqref{ineq_wrapping_up} we need
\begin{equation*}
\tau_{\min} \mu > 2\pi
\end{equation*}
in order to get $\kappa(\hat\gamma_c)>2\pi$ and complete the proof. Toponogov's theorem yields
\begin{equation*}
\tau_{\min} \geq 2\pi \left( 2-\frac{1}{\sqrt{\delta}} \right)
\end{equation*}
hence we need
\begin{equation*}
\mu \left( 2-\frac{1}{\sqrt{\delta}} \right) > 1.
\end{equation*} 
Combining all of the above, we need to ask
\begin{equation}\label{ineqs_for_mu}
2\delta \sqrt{\delta} > \mu > \frac{1}{2-\frac{1}{\sqrt{\delta}}} = \frac{\sqrt{\delta}}{2\sqrt{\delta}-1}.
\end{equation}
Note that $\frac{\sqrt{\delta}}{2\sqrt{\delta}-1} \geq 1$ as long as $\frac{1}{4}<\delta\leq1$. Under this condition, a choice of $\mu$ as in~\eqref{ineqs_for_mu} is possible if 
\begin{equation}\label{polynomial_ineq_delta}
2 \ \delta\sqrt{\delta} > \frac{\sqrt{\delta}}{2\sqrt{\delta}-1} \qquad \Leftrightarrow \qquad 2\delta(2\sqrt{\delta}-1) > 1.
\end{equation}
Introducing the variable $x=\sqrt{\delta}$ we get the inequality $P(x) := 2x^2(2x-1)-1 > 0$. The polynomial $P(x)$ has a unique real root $x_*$ because it is negative at its critical points $0$ and $\frac{1}{3}$. This root $x_*$ lies in $(\frac{1}{3},0.85)$ because $P(0.85)>0$. Hence we get $\delta_* = x_*^2 < 0.7225$ and~\eqref{polynomial_ineq_delta} holds for all $\delta \in (\delta_*,1]$, as claimed.

\bibliographystyle{alpha}
\bibliography{biblioFH.bib}

{\footnotesize

{\sc Anna Florio}

Université Paris Dauphine -- Place du Mar\'echal de Lattre de Tassigny, 75775 Paris, France

{\url{florio@ceremade.dauphine.fr}\/}

\medskip

{\sc Umberto Hryniewicz}

RWTH Aachen, Pontdriesch 10-12, 52062 Aachen, Germany

{\url{hryniewicz@mathga.rwth-aachen.de}}

}

\end{document}